\newcommand{\powser}[1]{[\![#1]\!]}
\newcommand{\pdiv}{$p$-divisible }

\newcommand{\G}{\mathbb{G}}

\newcommand{\Q}{\mathbb{Q}}
\newcommand{\C}{\mathbb{C}}
\newcommand{\Z}{\mathbb{Z}}
\newcommand{\QZ}{\Q_p/\Z_p}

\newcommand{\al}{\alpha}
\newcommand{\Lk}{\Lambda_k}

\newcommand{\lra}[1]{\overset{#1}{\longrightarrow}}

\newcommand{\Prod}[1]{\underset{#1}{\prod}}

\newcommand{\Coprod}[1]{\underset{#1}{\coprod}}
\newcommand{\Colim}[1]{\underset{#1}{\colim}}

\newcommand{\E}{E_{n}}
\newcommand{\LE}{L_{K(t)}E_n}

\newcommand{\Loops}{\mathcal{L}}
\def \mmod{/\mkern-3mu /}

\date{\today}
\documentclass[10pt]{amsart}
\usepackage{amssymb,amsfonts,amsthm,amsmath,verbatim,tikz-cd,lscape}
\usepackage[margin=1.5in]{geometry}
\usepackage[enableskew]{youngtab}
\usepackage[all]{xy}
\usepackage{ytableau}
\theoremstyle{definition}

\DeclareMathOperator{\pr}{pr}
\DeclareMathOperator{\im}{im}

\DeclareMathOperator{\Hom}{Hom}
\DeclareMathOperator{\colim}{colim}

\DeclareMathOperator{\Iso}{Iso}
\DeclareMathOperator{\Spec}{Spec}
\DeclareMathOperator{\Spf}{Spf}

\DeclareMathOperator{\Sub}{Sub}
\DeclareMathOperator{\Div}{Div}
\DeclareMathOperator{\Level}{Level}

\DeclareMathOperator{\Epi}{Epi}
\DeclareMathOperator{\Tor}{Tor}
\DeclareMathOperator{\Ext}{Ext}

\newtheorem{thm}[subsection]{Theorem}
\newtheorem{prop}[subsection]{Proposition}
\newtheorem{cor}[subsection]{Corollary}
\newtheorem{lemma}[subsection]{Lemma}

\newtheorem*{mainthm}{Theorem}
\newtheorem*{maincor}{Corollary}
\newtheorem{definition}[subsection]{Definition}

\newtheorem{example}[subsection]{Example}

\newtheorem*{remark}{Remark}

\begin{document}
\title{A transchromatic proof of Strickland's theorem}
\author{Tomer M. Schlank}
\author{Nathaniel Stapleton}

\begin{abstract}
In \cite{etheorysym} Strickland proved that the Morava $E$-theory of the symmetric group has an algebro-geometric interpretation after taking the quotient by a certain transfer ideal. This result has influenced most of the work on power operations in Morava $E$-theory and provides an important calculational tool. In this paper we give a new proof of this result as well as a generalization by using transchromatic character theory. The character maps are used to reduce Strickland's result to representation theory.
\end{abstract}

\maketitle
\section{Introduction and outline}
The coefficient ring of Morava $E$-theory carries the universal deformation of a height $n$ formal group over a perfect field of characteristic $p$. This formal group seems to determine the Morava $E$-theory of a large class of spaces. An example of this is the important result of Strickland's \cite{etheorysym} that describes the $E$-theory of the symmetric group (modulo a transfer ideal) as the scheme that classifies subgroups in the universal deformation. This result plays a critical role in the study of power operations for Morava $E$-theory \cite{rezkpowerops, rezkkoszul, rezkcongruence} and explicit calculations of the $E$-theory of symmetric groups \cite{rezkpowercalc, zhupowercalc} and the spaces $L(k)$ \cite{BehrensRezk}.

In this series of papers we exploit a method that reduces facts such as the existence of Strickland's isomorphism into questions in representation theory by using the transchromatic generalized character maps of \cite{tgcm}. In this paper we illustrate the method by giving a new proof of Strickland's result as well as a generalization to wreath products of abelian groups with symmetric groups. We stress that the new feature here is more than the generalization of Strickland's result to certain \pdiv groups, it is a method for reducing a class of hard problems in $E$-theory to representation theory.


We explain some of the ideas. Let $G$ be a finite group. There is an endofunctor of finite $G$-CW-complexes $\Loops$ called the ($p$-adic) inertia groupoid functor that has some very useful properties:
\begin{itemize}
\item Given a cohomology theory $E_G$ on finite $G$-CW-complexes, the composite $E_G(\Loops(-))$ is a cohomology theory on finite $G$-CW-complexes.
\item Let $\ast$ be a point with a $G$-action. There is an equivalence
\[
EG \times_G \Loops(\ast) \simeq \Hom(B\Z_p, BG).
\]
The right hand side is the ($p$-adic) free loop space of $BG$.
\item If $E$ is $p$-complete, characteristic $0$, and complex oriented with formal group $\G_{E}$ then (working Borel equivariantly) there are isomorphisms
\[
E_{\Z/p^k}^0(\Loops(\ast)) \cong E^0(\Coprod{\Z/p^k}B\Z/p^k) \cong \Prod{\Z/p^k}E^0(B\Z/p^k).
\]
This implies that, as $k$ varies, the algebro-geometric object associated to $E(\Loops(-))$ is the \pdiv group $\G_{E}\oplus \QZ$.
\item The target of the character maps of \cite{hkr} and \cite{tgcm} take values in a cohomology theory built using $\Loops$.
\end{itemize}
Because of the second property we feel safe abusing notation and writing $EG \times_G \Loops(\ast)$ and $\Loops BG$ interchangeably. The latter is certainly easier on the eyes.

Now let $E$ be Morava $\E$. The \pdiv group associated to $\G_{E}$ is the directed system built out of the $p^k$-torsion
\[
\G_{E}[p] \rightarrow \G_{E}[p^2] \rightarrow \ldots.
\]
Precomposing with the inertia groupoid $h$ times provides a cohomology theory $E(\Loops^h(-))$ with associated \pdiv group $\G_{E} \oplus \QZ^h$, where $\QZ^h = (\QZ)^h$. In \cite{hkr} and \cite{tgcm} rings called $C_t$ for $0 \leq t < n$ are constructed with three important properties:
\begin{itemize}
\item The ring $C_t$ is a faithfully flat $L_{K(t)}E^0$-algebra and further $E^0$ injects into $C_t$.
\item There is an isomorphism of \pdiv groups
\[
C_t \otimes_{E^0} (\G_{E} \oplus \QZ^h) \cong (C_t \otimes_{L_{K(t)}E^0} \G_{L_{K(t)}E})\oplus \QZ^{n+h-t}.
\]
\item Furthermore, for $X$ a finite $G$-CW-complex there is an isomorphism
\[
C_t \otimes_{E^0} E_{G}^0(X) \cong C_{t} \otimes_{L_{K(t)}E^0} (L_{K(t)}E)_{G}^0(\Loops^{n-t}X).
\]
\end{itemize}

In this paper we are interested in comparing two schemes. The first is corepresented by the cohomology of the symmetric group $E^0(\Loops^hB\Sigma_{p^k})$ modulo the ideal $I_{tr}$ generated by the image of the transfer maps along the inclusions $\Sigma_i \times \Sigma_j \subset \Sigma_{p^k}$, where $i,j>0$ and $i+j = p^k$. Thus the scheme is the functor
\[
\Spec(E^0(\Loops^hB\Sigma_{p^k})/I_{tr}):E^0\text{-algebras} \lra{} \text{Set}
\]
mapping
\[
R \mapsto \Hom_{E^0\text{-alg}}(E^0(\Loops^hB\Sigma_{p^k})/I_{tr},R).
\]
The second scheme classifies subgroups of the \pdiv group $\G_{E} \oplus \QZ^h$ of order $p^k$. It is the functor
\[
\Sub_{p^k}(\G_{E}\oplus \QZ^h):E^0\text{-algebras} \lra{} \text{Set}
\]
mapping
\[
R \mapsto \{H \subseteq R \otimes \G_{E}\oplus\QZ^h||H|=p^k\}.
\]
We begin by proving that these schemes are both finite and flat over $E^0$. We construct a map between the schemes by using properties of the ring $C_0$ to embed the rings of functions on these schemes as lattices inside the  (generalized) class functions on $\Sigma_{p^k}$. By embedding both rings in the same large ring we are able to see that one is a sublattice of the other. Because $C_0$ is faithfully flat over $p^{-1}E = L_{K(0)}E$ the map we construct is an isomorphism after inverting $p$.

We prove that the map is an isomorphism by using the third property of the ring $C_1$ to reduce the computation to height $1$. It suffices to prove the map is an isomorphism after reduction to height $1$ because the schemes are finite and flat and the determinant of the map between them is a power of $p$ times a unit. We wish to show that the power of $p$ is zero. Then we extend our coefficients further to identify with a faithfully flat extension of $p$-adic $K$-theory. Thus we have produced a character map from $E$ to a form of $p$-adic $K$-theory. This allows us to reduce to a problem in representation theory that ends up being equivalent to the canonical isomorphism
\[
\Spec(RA) \lra{\cong} \Hom(A^*,\G_m),
\]
where $RA$ is the representation ring of a finite abelian group $A$, $\G_m$ is the multiplicative group scheme, and $A^*$ is the Pontryagin dual.

The main result of the paper is the following:
\begin{mainthm}
Let $\G_{\E}$ be the universal deformation formal group, let $\Loops$ be the ($p$-adic) free loop space functor, and let
\[
\Sub_{p^k}(\G_{\E}\oplus \QZ^h):\E^0\text{-algebras} \lra{} \text{Set}
\]
be the functor mapping
\[
R \mapsto \{H \subseteq R \otimes \G_{\E}\oplus\QZ^h||H|=p^k\}.
\]
There is an isomorphism
\[
\Spec(\E^0(\Loops^hB\Sigma_{p^k})/I_{tr}) \cong \Sub_{p^k}(\G_{\E}\oplus \QZ^h),
\]
where $I_{tr}$ is the ideal generated by the image of the transfer along the inclusions $\Sigma_i \times \Sigma_j \subseteq \Sigma_{p^k}$.
\end{mainthm}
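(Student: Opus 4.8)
The plan is to realize both schemes as lattices inside a fixed ambient ring of generalized class functions on $\Sigma_{p^k}$, compare them there, and then pass to height $1$ and finally to $p$-adic $K$-theory to identify the map as an isomorphism coming from representation theory.

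\textbf{Finiteness and flatness.} First I would establish that both $\Spec(\E^0(\Loops^h B\Sigma_{p^k})/I_{tr})$ and $\Sub_{p^k}(\G_\E\oplus\QZ^h)$ are finite and flat over $\E^0$. For the first scheme, the statement that $\E^0(B\Sigma_{p^k})/I_{tr}$ is finite and flat is essentially Strickland's original input; the presence of the inertia groupoid is handled by the third property of $C_0$ (with $t=0$, so no loops on the right but an extension of scalars to $L_{K(0)}\E = p^{-1}\E$) together with the known structure of $\E^0(\Loops^h B\Sigma_{p^k})$ as a product over conjugacy classes of continuous homomorphisms $\Z_p^h\to\Sigma_{p^k}$ of $\E$-cohomology of centralizers. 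For $\Sub_{p^k}(\G_\E\oplus\QZ^h)$, finiteness and flatness follow from the theory of the Cartier dual / the scheme of subgroups of a \pdiv group: one filters subgroups by their intersection with $\G_\E$ versus their image in $\QZ^h$ and reduces to the finite flatness of $\Sub_{p^j}(\G_\E)$, which is classical (Strickland, Lubin–Tate level structures).

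\textbf{Constructing the comparison map and checking it after inverting $p$.} Next I would use the third property of $C_0$ to obtain, for every finite group $G$, an isomorphism $C_0\otimes_{\E^0}\E^0_G(X)\cong C_0\otimes_{p^{-1}\E^0}(p^{-1}\E)^0_G(\Loops^{n}X)$, and the corresponding statement of the second property identifying $C_0\otimes\G_\E\oplus\QZ^h$ with $\QZ^{n+h}$ (the height drops to $0$). Applying this to $X = B\Sigma_{p^k}$ and using the rational HKR-type computation, both $C_0\otimes(\E^0(\Loops^h B\Sigma_{p^k})/I_{tr})$ and $C_0\otimes\Sub_{p^k}(\G_\E\oplus\QZ^h)$ become identified with the ring of functions on the set of order-$p^k$ subgroups of $\QZ^{n+h}$, i.e. with a product of copies of $C_0$ indexed by such subgroups — the representation-theoretic content here being the identification of rational class functions on $\Sigma_{p^k}$ supported on appropriate conjugacy classes with functions on subgroups. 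Since $E^0$ injects into $C_0$ and $C_0$ is faithfully flat over $p^{-1}\E^0$, this produces a canonical map of $\E^0$-algebras $\E^0(\Loops^h B\Sigma_{p^k})/I_{tr}\to \Sect(\Sub_{p^k}(\G_\E\oplus\QZ^h))$ realizing both as full-rank sublattices of the same $C_0$-module, hence an isomorphism after inverting $p$; in particular the determinant of the map is $p^m\cdot(\text{unit})$ for some $m\geq 0$, and it remains to show $m=0$.

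\textbf{Descending to height $1$ and to $K$-theory.} Since both sides are finite flat over $\E^0$ and the cokernel of the comparison map is killed by $p^m$, it suffices to prove the map is an isomorphism after base change along $\E^0\to C_1\otimes_{L_{K(1)}\E^0}(L_{K(1)}\E)^0$, because a faithfully flat base change detects whether $p^m$ is a unit — more precisely, by the second and third properties of $C_1$ the problem becomes the analogous comparison for the \pdiv group $(C_1\otimes_{L_{K(1)}\E^0}\G_{L_{K(1)}\E})\oplus\QZ^{n+h-1}$ over a height-$1$ base, and $\Loops^{n-1}B\Sigma_{p^k}$ replaces $B\Sigma_{p^k}$. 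I would then further extend scalars so that the height-$1$ Lubin–Tate theory is replaced by a faithfully flat extension of $p$-adic $K$-theory, turning the character map into one landing in (a form of) $KU^{\wedge}_p$. At height $1$ the subgroup scheme $\Sub_{p^k}$ of $\G_m^{\wedge}\oplus\QZ^{n+h-1}$ and $\Sigma_{p^k}$-equivariant $K$-theory are both computable, and the statement to prove becomes the assertion that the natural map from $K$-theory of $B\Sigma_{p^k}$ (mod transfers, suitably looped) to functions on the subgroup scheme is an isomorphism. This in turn reduces, by decomposing subgroups and conjugacy classes, to the classical isomorphism $\Spec(R A)\xrightarrow{\ \cong\ }\Hom(A^\ast,\G_m)$ for $A$ a finite abelian group, together with the fact that in $K$-theory the transfer ideal precisely cuts out the "support on subgroups" locus (Strickland's character-theoretic computation of $K^0 B\Sigma_{p^k}$).

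\textbf{Main obstacle.} The principal difficulty is the last reduction: showing that the transfer ideal $I_{tr}$ corresponds exactly, and not just after inverting $p$, to the ideal cutting out the subgroup scheme — equivalently, that $m=0$. After the reductions above this is a genuinely height-$1$, representation-theoretic statement, but it requires an honest identification of the image of the transfers $\Ind_{\Sigma_i\times\Sigma_j}^{\Sigma_{p^k}}$ in terms of class functions (respectively in $K^0$) and a verification that quotienting by them leaves precisely the functions pulled back from $\Sub_{p^k}$ of the relevant \pdiv group, with no residual $p$-torsion in the cokernel. Making the bookkeeping of generalized conjugacy classes (continuous $\Z_p^{h}$-actions, or $\Z_p^{n+h-1}$-actions after the character map) match the combinatorics of subgroups of $\G_m^{\wedge}\oplus\QZ^{n+h-1}$ of order $p^k$ — organized by \gyd-type data — is where the real work lies.
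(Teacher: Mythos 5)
Your overall strategy is the one the paper actually follows (embed both rings in generalized class functions, observe the map is an isomorphism after inverting $p$, reduce to height $1$ and then to $p$-adic $K$-theory, and land on the representation-theoretic isomorphism $\Spec(RA)\cong\Hom(A^*,\G_m)$), but two steps of your sketch do not hold up as stated.

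First, the construction of the comparison map is not as automatic as you make it sound. Two full-rank sublattices of a common $C_0$-module do not by themselves produce a map between them; one must show one lattice sits inside the other, and the actual mechanism for this is entirely omitted from your sketch. In the paper this is done by passing through the permutation representation $\pi:\Sigma_{p^k}\to U(p^k)$, using the identification $\E^0(\Loops^hBU(p^k))\cong\Gamma\Div_{p^k}(\G_\E\oplus\Lambda_k)$ together with the surjection $\Gamma\Div_{p^k}(\G_\E\oplus\Lambda_k)\twoheadrightarrow\Gamma\Sub_{p^k}(\G_\E\oplus\QZ^h)$, and verifying a commutative cube whose bottom face is a height-$0$ (combinatorial) statement; this is what exhibits $\Gamma\Sub_{p^k}$ as a subring of $\E^0(\Loops^hB\Sigma_{p^k})/I_{tr}$. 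Your map also goes the wrong way: the natural map runs $\Gamma\Sub_{p^k}(\G_\E\oplus\QZ^h)\hookrightarrow\E^0(\Loops^hB\Sigma_{p^k})/I_{tr}$, not the reverse, and without the unitary-group/divisors step you have no way to build either.

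Second, the justification for reducing to height $1$ is wrong as written: the relevant base changes $\E^0\to C_1^0$ and $\E^0\to\bar C_1^0$ are not faithfully flat over $\E^0$ (already $\E^0\to L_{K(1)}\E^0$ is a localization followed by completion, which misses primes), so "faithfully flat base change detects whether $p^m$ is a unit" does not apply. The correct point is weaker and simpler: the determinant of the comparison map is $p^m$ times a unit, so it suffices to base change to any nonzero $\E^0$-algebra in which $p$ is not invertible and not nilpotent — then surjectivity after base change forces the image of $p^m$ to be a unit, hence $m=0$. That $p$ is neither a unit nor nilpotent in $\bar C_1^0$ has to be proved, and is what the subsection on landing in $K$-theory supplies. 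With these two repairs your sketch lines up with the paper's argument.
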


Let $\al:\Z_{p}^{h} \lra{} \Sigma_{p^k}$ and let
\[
\text{pr}:\Sub_{p^k}(\G_{\E}\oplus \QZ^h) \lra{} \Sub_{\leq p^k}(\QZ^h)
\]
be the projection onto the constant \'etale factor. For $A \subset \QZ^h$ let
\[
\Sub_{p^k}^{A}(\G_{\E}\oplus \QZ^h):\E^0\text{-algebras} \lra{} \text{Set}
\]
be the functor mapping
\[
R \mapsto \{H \subseteq R \otimes \G_{\E}\oplus\QZ^h||H|=p^k, \text{pr}(H) = A\}.
\]
We say a map $\al:\Z_{p}^{h} \lra{} \Sigma_{p^k}$ is monotypical if the corresponding $\Z_{p}^h$-set of size $p^k$ is a disjoint union of isomorphic  transitive $\Z_{p}^h$-sets. Assume that $\al$ is monotypical and let $A = \im \al$, then $C(\im \al) \cong A \wr \Sigma_{p^j}$. Let $I_{tr} \subset \E^0(BA\wr\Sigma_{p^j})$ be the ideal generated by the image of the transfers along $A \wr (\Sigma_l \times \Sigma_m) \subset A \wr \Sigma_{p^j}$ with $l,m>0$ and $l+m = p^j$.

A corollary of the theorem above is the $E$-theory version of the second authors Theorem 3.11 from \cite{subpdiv}.
\begin{maincor}
Let $\al:\Z_{p}^{h} \lra{} \Sigma_{p^k}$ be monotypical, $A$ the image of $\al$, and $p^j = p^k/|A|$. Then there are isomorphisms
\[
\Spec(\E^0(BA\wr\Sigma_{p^j})/I_{tr}) \cong \Spec(\E^0(BC(\im \al)/I_{tr}^{[\al]}) \cong \Sub_{p^k}^{A^*}(\G_{\E}\oplus \QZ^h),
\]
where $I_{tr}^{[\al]}$ is described in Theorem \ref{transferthm} below.
\end{maincor}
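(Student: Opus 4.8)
The plan is to derive the corollary from the main theorem together with the transfer computation of Theorem \ref{transferthm}, by decomposing both sides of the main theorem's isomorphism along their maximal \'etale quotients and matching the resulting pieces.

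First I would recall the standard homotopy decomposition
\[
\Loops^h B\Sigma_{p^k} \;\simeq\; \Hom(B\Z_p^h, B\Sigma_{p^k}) \;\simeq\; \Coprod{[\al]} BC(\im\al),
\]
the coproduct taken over conjugacy classes of homomorphisms $\al\colon\Z_p^h\to\Sigma_{p^k}$, with $C(\im\al)$ the centralizer of the image. Applying $\E^0(-)$ turns this into a product and $\Spec$ turns the product into a coproduct of affine schemes. By Theorem \ref{transferthm}, under this decomposition the transfer ideal $I_{tr}$ is the product of ideals $I_{tr}^{[\al]}\subseteq\E^0(BC(\im\al))$, where $I_{tr}^{[\al]}$ is the unit ideal unless $\al$ is monotypical, and in the monotypical case --- where $C(\im\al)\cong A\wr\Sigma_{p^j}$ with $A=\im\al$ and $p^j=p^k/|A|$ --- it is the ideal generated by the transfers along $A\wr(\Sigma_l\times\Sigma_m)$. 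Hence
\[
\Spec\big(\E^0(\Loops^h B\Sigma_{p^k})/I_{tr}\big) \;\cong\; \Coprod{[\al]\ \text{monotypical}}\Spec\big(\E^0(BA\wr\Sigma_{p^j})/I_{tr}\big),
\]
and the first isomorphism of the corollary is exactly the identification $\E^0(BC(\im\al))/I_{tr}^{[\al]}\cong\E^0(BA\wr\Sigma_{p^j})/I_{tr}$ of the $[\al]$-summand used here.

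For the second isomorphism I would decompose the target. Since $\QZ^h$ is the constant \'etale \pdiv group, $\Sub_{\leq p^k}(\QZ^h)$ is finite and constant over $\Spec\E^0$, so the projection $\pr$ splits $\Sub_{p^k}(\G_{\E}\oplus\QZ^h)$ as $\coprod_B\Sub_{p^k}^B(\G_{\E}\oplus\QZ^h)$ over subgroups $B\subseteq\QZ^h$ of order dividing $p^k$. Pontryagin duality sets up a bijection between such $B$ and monotypical conjugacy classes $[\al]$ with $\Z_p^h$-set of size $p^k$, sending $[\al]$ to $B=(\Z_p^h/\ker\al)^*=(\im\al)^*\subseteq\Hom(\Z_p^h,\QZ)=\QZ^h$. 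The crux is then that the isomorphism of the main theorem respects the two decompositions and carries the $[\al]$-component of the source to the $(\im\al)^*$-component of the target; restricting to $\im\al=A$ yields the chain of isomorphisms of the corollary.

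The step I expect to be the main obstacle is precisely this compatibility: showing that the main theorem's isomorphism intertwines the component decomposition of the source with $\pr$, and that the \'etale datum attached to the $[\al]$-component is the Pontryagin dual $A^*$ rather than $A$ itself. This requires tracing the character-theoretic construction of the map --- the base change to $C_0$ and the identification of the \pdiv group of $\E(\Loops^h(-))$ with $\G_{\E}\oplus\QZ^h$ --- in order to see how it records, for $\al\colon\Z_p^h\to\Sigma_{p^k}$, the dual of $\im\al$ as the \'etale part of the associated subgroup of $\G_{\E}\oplus\QZ^h$. Once this is in place, everything else is bookkeeping with the decomposition supplied by Theorem \ref{transferthm}.
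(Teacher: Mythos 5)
Your proposal is correct and follows essentially the same route as the paper. The paper derives the corollary from the construction of $f_{p^k}$ in Proposition~\ref{themap}, noting simply that ``restricting $f$ gives the isomorphism''; what makes this work is exactly the point you flag as the crux --- the right vertical arrow in the commutative diagram of Proposition~\ref{themap} sends $[\al]$ to the image of $(\im\al)^*\hookrightarrow\QZ^{n+h}$, so the component of $Cl((\Sigma_{p^k})_p^{n+h},C_0)/I_{tr}$ indexed by a tuple whose $\Z_p^h$-part is monotypical with image $A$ matches the fiber of $\pr$ over $A^*$, and this is precisely the compatibility of the two decompositions. Your first isomorphism is Proposition~\ref{p.trans} (via Lemma~\ref{l:wr_1} and Corollary~\ref{c:monotype}), and your bookkeeping with Theorem~\ref{transferthm} and Corollary~\ref{c:nonmonotype} for the non-monotypical pieces mirrors the proof of Proposition~\ref{freeEtheory}; the only difference from the paper is that you spell out the Pontryagin-duality bijection between monotypical classes and \'etale summands explicitly, which the paper leaves implicit in the construction of $f_{p^k}$.
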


\paragraph{\emph{Outline}} In Section \ref{s:characters} we review the transchromatic character maps and we show that, for good groups, the character map from height $n$ to height $1$ can be modified to land in a faithfully flat extension of $p$-adic $K$-theory.

Then in Section \ref{unitary} we develop a transchromatic character theory for the $E$-theory of the unitary group. We provide an algebro-geometric interpretation of the character map in terms of divisors of $\G_{\E}$ and divisors of $C_{t}\otimes \G_{\LE} \oplus (\Z/p^k)^h$.

In Section \ref{s:Groups} we develop the theory of centralizers in symmetric groups. These arise in the decomposition of the iterated free loop space $\Loops^hB\Sigma_{p^k}$. The centralizers that we are interested in all have the form $A\wr \Sigma_{p^j}$ with $j<k$. In order to understand certain transfer maps, we also study the free loops of the maps $B\Sigma_i \times B\Sigma_j \rightarrow B\Sigma_{p^k}$, where $i,j>0$ and $i+j=p^k$.

In Section \ref{s:ETheoryisFree} we show that $\E^0(BA\wr \Sigma_{p^j})/I_{tr}$ is finitely generated and free as an $\E^0$-module. This relies on work of Rezk in \cite{rezkkoszul} that reduces the question to Strickland's result (Thm 8.6, \cite{etheorysym}) that $\E^0(B\Sigma_{p^k})/I_{tr}$ is finite and free. 

In Section \ref{s:SubGrpisFree} we show that the ring of functions on $\Sub_{p^k}^{A}(\G_{\E} \oplus \QZ^h)$ is free of the same rank as an $\E^0$-module. This follows by reduction to Strickland's result (Thm 10.1, \cite{subgroups}) concerning $\Sub_{p^k}(\G_{\E})$. Thus in Sections \ref{s:ETheoryisFree} and \ref{s:SubGrpisFree} we rely on two freeness results of Strickland's.

Finally in Section \ref{s:GenStrickland} we construct an injective map
\[
f_{p^k}:\Gamma \Sub_{p^k}(\G_{\E}\oplus \QZ^h) \hookrightarrow \E^0(\Loops^hB\Sigma_{p^k})/I_{tr}
\]
by embedding both of the rings into the ring of class functions on $(n+h)$-tuples of commuting elements in $\Sigma_{p^k}$ and exhibiting the domain as a subset of the codomain. The map has the property that it becomes an isomorphism after inverting $p$. Since the domain and codomain are free of the same rank this means that the failure of the determinant to be a unit is only a power of the prime $p$. Thus, to prove that the map is an isomorphism, it suffices to base change the map to any $\E^0$-algebra in which $p$ is not a unit and is not nilpotent and prove that the resulting map is an isomorphism. We then use the transchromatic character maps to reduce to height $1$ and further base change in order to identify with a form of $p$-adic $K$-theory. This converts proving the map is an isomorphism to a question in representation theory that is easy to solve.

There is an appendix that includes an elementary proof of Strickland's theorem when $k = 1$.

\paragraph{\emph{Acknowledgements}} It is a pleasure to thank Jacob Lurie for suggesting reduction to height $1$. With a sentence he initiated this project. We would like to thank Tobias Barthel for many useful conversations and for his interest from the very start. We have been strongly influenced by the work of Charles Rezk and Neil Strickland and we are grateful to Charles Rezk for several discussions. We also thank Mark Behrens, Pavel Etingof, Tyler Lawson, Haynes Miller, Peter Nelson, Eric Peterson, and Vesna Stojanoska for their many helpful remarks. The first author was supported by the Simons foundation and the second author was partially supported by NSF grant DMS-0943787.
\section{From $E$-theory to $K$-theory} \label{s:characters}
We recall the basics of the transchromatic character maps. Then we modify the existing character map that lands in height $1$ to land in a faithfully flat extension of $p$-adic $K$-theory.
\subsection{Character theory recollections} \label{ss:recollections}

Fix a prime $p$. We fix inclusions
\[
\QZ \subset \Q/\Z \subset S^1 \subset \C^*,
\]
where the middle inclusion is the exponential map $e^{2\pi i x}$. Thus for a finite abelian group $A$ there is a canonical isomorphism between the characters of $A$ and the Pontryagin dual of $A$
\[
\hat{A} \cong A^*.
\]
Throughout the rest of the paper we will use the notation $A^*$ for either of these.
 
Now let $G$ be a finite group and $X$ a finite $G$-CW complex. We may produce from this the topological groupoid $X \mmod G$. Let $|X\mmod G|$ be the geometric realization of the nerve of $X \mmod G$. Thus we have an equivalence
\[
|X \mmod G| \simeq EG \times_G X.
\]
For any cohomology theory $E$, in this paper, we always set
\[
E^*(X\mmod G) = E^*(|X\mmod G|) = E^*(EG \times_G X).
\]
There is an endofunctor of topological groupoids called the ($p$-adic) inertia groupoid functor that we will denote by $\Loops$. It takes finite $G$-CW complexes to finite $G$-CW complexes by mapping
\[
X \mmod G \mapsto \Loops(X \mmod G) := \Hom_{top.gpd}(* \mmod \Z_{p},X \mmod G),
\]
where the right hand side is the internal mapping topological groupoid. It is a $G$-space by the isomorphism
\[
\Hom_{top.gpd}(* \mmod \Z_{p},X \mmod G) \cong \big( \Coprod{\al \in \Hom(\Z_p,G)}X^{\im \al} \big)\mmod G,
\]
where $X^{\im \al}$ is the fixed points of $X$ with respect to the image of $\al$ and $G$ acts by mapping $x \in X^{\im \al}$ to $gx \in X^{\im g \al g^{-1}}$.

It is notationally convenient to equate $\Loops(X \mmod G)$ and $\Loops(EG \times_G X)$ and we will use these interchangeably. Note that $\Loops(EG \times_G X)$ is not quite (but is closely related to) the free loop space of $EG \times_G X$. In particular, when $X$ is a space with an action by the trivial group then $\Loops X = X$ and when $X = \ast$ and $G$ is a $p$-group $\Loops(X \mmod G) = \Hom(S^1, BG)$.

For a finite group $G$, the transchromatic generalized character maps approximate the Morava $E$-theory of $BG$ by a certain height $t$ cohomology theory for any $t < n$. For arbitrary $t$ these are constructed in \cite{tgcm}, when $t = 0$ this is in \cite{hkr}, and when $n=1$ it can be found in \cite{Adams-Maps2}. These papers construct a faithfully flat extension of $\LE$ called $C_{t}^{0}$ (for $t>0$ the faithfully flatness is Proposition \ref{p:ff} below) and a map of cohomology theories on finite $G$-CW complexes called ``the character map"
\[
\E^0(X\mmod G) \lra{} C_{t}^{0} \otimes_{\LE^0}\LE^0(\Loops^{n-t}X \mmod G).
\]

The main theorem regarding these maps is the following:
\begin{thm} \label{charactermap} \cite{tgcm}
The character map has the property that the map induced by tensoring the domain up to $C_t$
\[
C_{t}^0 \otimes_{\E^0}\E^0(X\mmod G) \lra{\cong} C_{t}^0 \otimes_{\LE^0}\LE^0(\Loops^{n-t}X \mmod G)
\]
is an isomorphism.
\end{thm}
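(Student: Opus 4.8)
The plan is to reduce the statement, by the usual cohomology-theory and Mackey-functor manipulations, to a direct computation for the groups $\Z/p^k$, where it falls out of the $p$-divisible group splitting built into $C_{t}^{0}$. First I would fix the finite group $G$ and observe that both sides of the displayed map are $\Z/2$-graded cohomology theories in the finite $G$-CW complex $X$: the target because precomposition with $\Loops$ preserves cohomology theories (as recalled above, applied $n-t$ times) together with exactness of $C_{t}^{0}\otimes_{\LE^0}(-)$ (Proposition \ref{p:ff}); the source because $C_{t}^{0}$ is flat over $\E^0$, so $C_{t}^{0}\otimes_{\E^0}\E^0(-\mmod G)$ stays exact. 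The character map is a natural transformation, so by the standard induction over the cells of a finite $G$-CW complex (Mayer--Vietoris and the five lemma) it suffices to check it is an isomorphism on orbits $X=G/H$. Since $(G/H)\mmod G\simeq\ast\mmod H$ as topological groupoids and $\Loops$ preserves this equivalence, the theorem is reduced to showing, for every finite group $H$, that
\[
C_{t}^{0}\otimes_{\E^0}\E^0(BH)\lra{}C_{t}^{0}\otimes_{\LE^0}\LE^0(\Loops^{n-t}(\ast\mmod H))
\]
is an isomorphism.

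Next I would decompose the iterated inertia groupoid. Iterating the isomorphism $\Loops(\ast\mmod H)\cong\big(\Coprod{\al\in\Hom(\Z_p,H)}\ast\big)\mmod H$ gives
\[
\Loops^{n-t}(\ast\mmod H)\;\simeq\;\Coprod{[\al]}\ast\mmod C_H(\im\al),
\]
a finite coproduct over $H$-conjugacy classes of continuous homomorphisms $\al:\Z_p^{n-t}\to H$, each with finite abelian $p$-group image; so the target becomes $\Prod{[\al]}C_{t}^{0}\otimes_{\LE^0}\LE^0(BC_H(\im\al))$ and the character map is componentwise the HKR ``evaluation at $\al$'' map. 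Both $H\mapsto\E^0(BH)$ and $H\mapsto\Prod{[\al]}C_{t}^{0}\otimes_{\LE^0}\LE^0(BC_H(\im\al))$ carry Mackey-functor structures (restriction and stable transfer along subgroup inclusions) and the character map respects them; since $\E^0(BK)\cong\E^0$ whenever $p\nmid|K|$ and $[H:H_{(p)}]$ is invertible in $\E^0$ (hence in $C_{t}^{0}$), the Cartan--Eilenberg stable-element formula presents $\E^0(BH)\otimes_{\E^0}C_{t}^{0}$, and likewise the target, as the stable elements over a Sylow $p$-subgroup, so we may assume $H$ is a $p$-group. A further group-theoretic induction (as in \cite{hkr}) reduces to $H$ a finite abelian $p$-group, and a K\"unneth argument --- using that $\E^0(BH)$ is finite free over $\E^0$ for finite abelian $H$ --- reduces to $H=\Z/p^k$.

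For $H=\Z/p^k$ the remaining claim is a computation. Here $\E^0(B\Z/p^k)=\E^0\powser{x}/([p^k](x))$ is the ring of functions on the $p^k$-torsion group scheme $\G_{\E}[p^k]$; base-changing along $\E^0\to C_{t}^{0}$ and invoking the defining isomorphism of $p$-divisible groups $C_{t}^{0}\otimes_{\E^0}\G_{\E}\cong(C_{t}^{0}\otimes_{\LE^0}\G_{\LE})\oplus\QZ^{n-t}$ yields
\[
\G_{\E}[p^k]\otimes_{\E^0}C_{t}^{0}\;\cong\;\big(\G_{\LE}[p^k]\otimes_{\LE^0}C_{t}^{0}\big)\oplus(\Z/p^k)^{n-t}
\]
as group schemes over $C_{t}^{0}$, so its ring of functions is $\Prod{(\Z/p^k)^{n-t}}C_{t}^{0}\otimes_{\LE^0}\LE^0\powser{x}/([p^k](x))$; under $(\Z/p^k)^{n-t}\cong\Hom(\Z_p^{n-t},\Z/p^k)$ and $\LE^0\powser{x}/([p^k](x))\cong\LE^0(B\Z/p^k)=\LE^0(BC_{\Z/p^k}(\im\al))$ this is exactly the target produced in the second paragraph, and unwinding the construction of the character map identifies the two maps. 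This proves the theorem.

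The step I expect to be the main obstacle is the reduction in the second paragraph from a general $p$-group to an abelian $p$-group: this is the technical heart of any HKR-type character theorem, requiring both the proof that the character map commutes with transfers and the group-theoretic induction down to abelian subgroups. The feature special to $t>0$ is that $p$ is not invertible in $C_{t}^{0}$, so one cannot work with rational Mackey functors as in \cite{hkr}; the transfer compatibility and the descent must instead be carried out using the faithful flatness of $C_{t}^{0}$ over $\LE^0$ (Proposition \ref{p:ff}), together, where $p$ must genuinely be inverted, with the compatibility of the height $n$ to $t$ character map with the height $n$ to $0$ character map over $C_{0}^{0}$. Given that input, everything else collapses --- via the stated properties of $C_{t}^{0}$ --- to the transparent computation of the third paragraph.
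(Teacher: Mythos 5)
This theorem is quoted from \cite{tgcm}; the present paper does not reprove it, so there is no ``paper's proof'' to compare your argument against. On its own merits, the skeleton of your sketch does agree with the strategy of \cite{tgcm}: reduce by cell induction to orbits $G/H$, hence to $\ast \mmod H$ for every finite $H$; decompose $\Loops^{n-t}(\ast \mmod H)$ into centralizers; reduce to $H$ a $p$-group by stable elements (this step is fine since $[H : H_{(p)}]$ is prime to $p$, hence a unit in the complete local ring $\E^0$ and in $C_t^0$); then reduce to $H=\Z/p^k$ and conclude from the defining property of $C_t^0$ (Proposition \ref{num2}). The terminal computation for $\Z/p^k$ is correct, and the observation that the abelian case is exactly a test of the $p$-divisible-group splitting built into $C_t^0$ is the right one.

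The genuine gap is precisely the step you flag as the obstacle and do not carry out: passing from an arbitrary $p$-group to abelian $p$-groups. Your proposed workaround does not close it. The HKR-style induction to abelian subgroups is an Artin-induction argument that genuinely inverts $p$, and $p$ is a nonunit in $C_t^0$ for $t>0$. The fallback you suggest --- faithful flatness of $C_t^0$ over $\LE^0$ (Proposition \ref{p:ff}) plus compatibility with the height-$0$ character map over $C_0^0$ --- only yields the isomorphism after inverting $p$: the relevant map $C_t^0 \to C_0^0$ factors through $p^{-1}C_t^0$, so it is not faithfully flat, and faithful flatness of $C_t^0$ over $\LE^0$ does not help since both sides are already $C_t^0$-modules. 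An isomorphism over $p^{-1}C_t^0$ together with freeness is also not enough on its own; in the body of the present paper that kind of argument has to be supplemented by a further base change to $\bar C_1^0$ and an integral input. The actual proof in \cite{tgcm} handles the reduction to abelian $A$ by a different and more delicate mechanism, and until that ingredient is supplied your outline does not close, although everything surrounding it is sound.
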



There is a decomposition
\[
\Loops^h BG \cong \Coprod{[\al] \in \Hom(\Z_{p}^h, G)/\sim} BC(\im \al),
\]
where $[\al]$ is the conjugacy class of a map $\al:\Z_{p}^h \lra{} G$ and $C(\im \al)$ is the centralizer of the image of $\al$.

\begin{cor}
When $X = *$ the character map takes the form
\[
\E^0(BG) \lra{} \Prod{[\al] \in \Hom(\Z_{p}^{n-t}, G)/\sim} C_{t}^0 \otimes_{\LE^0} \LE^0(BC(\im \al)).
\]
\end{cor}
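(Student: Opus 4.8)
The plan is to obtain this directly from Theorem~\ref{charactermap} by specializing to $X = *$ and rewriting the target via the decomposition of $\Loops^h BG$ recorded just above the statement. With $h = n-t$, the character map for the point with $G$-action reads
\[
\E^0(BG) = \E^0(* \mmod G) \lra{} C_{t}^{0} \otimes_{\LE^0}\LE^0(\Loops^{n-t}(*\mmod G)),
\]
so it suffices to identify the right-hand side with the displayed product.

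First I would apply the decomposition
\[
\Loops^{n-t} BG \cong \Coprod{[\al] \in \Hom(\Z_{p}^{n-t}, G)/\sim} BC(\im \al),
\]
which follows by iterating the formula for $\Loops(X\mmod G)$ from Subsection~\ref{ss:recollections} a total of $n-t$ times: a point of $\Loops^{n-t}(*\mmod G)$ records an $(n-t)$-tuple of pairwise commuting elements of $G$, each of finite $p$-power order since it lies in the image of a continuous map from $\Z_p$, and $G$ acts by simultaneous conjugation, so the components are indexed by conjugacy classes of homomorphisms $\al\colon \Z_p^{n-t}\to G$ with stabilizer the centralizer $C(\im\al)$. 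Since $\LE$ is a cohomology theory it carries this coproduct to the product $\Prod{[\al]}\LE^0(BC(\im\al))$.

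Finally I would observe that the indexing set $\Hom(\Z_{p}^{n-t}, G)/\sim$ is finite — a homomorphism $\Z_p^{n-t}\to G$ is determined by the images of $n-t$ topological generators, each an element of the finite group $G$ — so that $C_{t}^{0}\otimes_{\LE^0}(-)$, which commutes with finite products, may be pulled inside to give
\[
C_{t}^0 \otimes_{\LE^0}\LE^0(\Loops^{n-t}BG) \cong \Prod{[\al] \in \Hom(\Z_{p}^{n-t}, G)/\sim} C_{t}^0 \otimes_{\LE^0}\LE^0(BC(\im\al)),
\]
which is the claimed target. The statement is thus essentially a repackaging of Theorem~\ref{charactermap} together with the inertia-groupoid computation; the only points requiring any care are the iteration of that computation — identifying the stabilizer of a commuting tuple with the centralizer of the corresponding subgroup, and checking that conjugation acts as stated — and the finiteness of the index set, which is exactly what licenses moving the base change through the product. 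Neither is a genuine obstacle.
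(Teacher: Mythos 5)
Your proposal is correct and follows exactly the route the paper takes: the corollary is just Theorem~\ref{charactermap} applied to $X=*$, combined with the coproduct decomposition of $\Loops^{n-t}BG$ stated immediately beforehand, with cohomology converting the coproduct to a product and finiteness of $\Hom(\Z_p^{n-t},G)/\sim$ allowing $C_t^0\otimes_{\LE^0}(-)$ to pass through the product. Nothing is missing.
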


\begin{remark}
When $t=0$ the ring $C_{0}^0$ is a $L_{K(0)}\E^0 = p^{-1}\E^0$-algebra and so $C_{0}^{0}(BG) = C_{0}^0$ for all finite $G$. Thus the codomain of the character map is just the product
\[
\Prod{[\al] \in \Hom(\Z_{p}^n, G)/\sim}C_{0}^0,
\]
which is the ring of generalized class functions on $G$ with values in $C_{0}^{0}$. Because of this we will often rewrite this ring as $Cl(G_{p}^{n},C_{0}^{0})$, where $G_{p}^{n}$ is shorthand for $\Hom(\Z_{p}^{n},G)$.
\end{remark}

Furthermore, there is a version of the theorem that includes taking the quotient by a transfer along $i:H \subset G$. Let $I_{tr} \subset \E^0(BG)$ be the image of the transfer along $i$. Let $\al : \Z_p \rightarrow G$ and let $\al '$ be a lift of $\al$ to $H$ up to conjugacy so that there exists $g \in G$ so that $gi\al ' g^{-1} = \al$. There is an induced inclusion
\[
gC_H(\im \al ')g^{-1} \lra{} C_G(\im \al).
\]
Define the ideal $I_{tr}^{[\al]} \subseteq C_{t}^{0}\otimes_{\LE^0} \LE^0(BC_G(\im \al))$ to be the ideal generated by the image of the transfer along the inclusions induced by the lifts of $\al$ to $H$ up to conjugacy.

\begin{thm} \label{transferthm} (Theorem 2.18, \cite{subpdiv})
The character map induces an isomorphism
\[
C_{t}^0 \otimes_{\E^0} \E^0(BG)/I_{tr} \cong \Prod{[\al] \in G_{p}^{n-t}/\sim}C_{t}^{0}\otimes_{\LE^0} \LE^0(BC(\im \al))/I_{tr}^{[\al]}.
\]
\end{thm}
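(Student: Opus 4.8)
The plan is to deduce this from the transfer-free statement of Theorem \ref{charactermap}, so that the only genuinely new ingredient is the compatibility of the character map with transfers along finite covers. First I would record the transfer-free consequence: applying Theorem \ref{charactermap} to the finite $G$-CW complexes $\ast$ and $G/H$ (whose Borel constructions are $BG$ and $BH$) and using the decomposition $\Loops^{n-t}BG\cong\Coprod{[\al]\in G_p^{n-t}/\sim}BC(\im\al)$ together with its analogue for $H$, one gets the ring isomorphisms $\chi_G$ and $\chi_H$ displayed in the Corollary above. By definition $\E^0(BG)/I_{tr}$ is the cokernel of the transfer $\mathrm{tr}\colon\E^0(BH)\to\E^0(BG)$, which is a map of $\E^0(BG)$-modules by the projection formula (so that $I_{tr}$ really is an ideal); since $C_t^0\otimes_{\E^0}-$ is right exact, $C_t^0\otimes_{\E^0}(\E^0(BG)/I_{tr})=\operatorname{coker}(1\otimes\mathrm{tr})$, and the task is to identify this cokernel with the claimed product.

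Second, the key point is that the character map is a map of cohomology theories on finite $G$-CW complexes (Theorem \ref{charactermap}), and, as noted in the introduction, the target $C_t^0\otimes_{\LE^0}\LE^0(\Loops^{n-t}(-\mmod G))$ is again a cohomology theory because $\Loops$, hence $\Loops^{n-t}$, sends a finite $G$-CW complex to a finite $G$-CW complex. A map of cohomology theories commutes with transfers along finite covers, so applying the character map to the covering $EG\times_G(G/H)=BH\to BG=EG\times_G(\ast)$ produces a commuting square whose top arrow is $1\otimes\mathrm{tr}$, whose bottom arrow is $1\otimes\mathrm{tr}'$ for the transfer $\mathrm{tr}'$ along the finite covering $\Loops^{n-t}BH\to\Loops^{n-t}BG$, and whose vertical arrows are the isomorphisms $\chi_H$ and $\chi_G$.

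Third, I would identify $\Loops^{n-t}$ of this covering combinatorially. Iterating the formula $\Loops(X\mmod G)\cong(\Coprod{\al\in\Hom(\Z_p,G)}X^{\im\al})\mmod G$ with $X=G/H$ and tracking the conjugation action — matching the stabilizer in $C(\im\al)$ of a fixed coset $gH$ with the subgroup $gC_H(\im\al')g^{-1}$ — one sees that over the component $BC(\im\al)$ of $\Loops^{n-t}BG$ the covering $\Loops^{n-t}BH\to\Loops^{n-t}BG$ is the disjoint union, over conjugacy classes of lifts $\al'$ of $\al$ to $H$, of the covers classified by the inclusions $gC_H(\im\al')g^{-1}\hookrightarrow C(\im\al)$ that appear in the definition of $I_{tr}^{[\al]}$. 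Since the transfer along a disjoint union of covers is the sum of the transfers, and each $\operatorname{im}\bigl(\mathrm{tr}_{gC_H(\im\al')g^{-1}}^{C(\im\al)}\bigr)$ is already an ideal, the image of $\mathrm{tr}'$ is $\Prod{[\al]}I_{tr}^{[\al]}$ and $\operatorname{coker}(\mathrm{tr}')=\Prod{[\al]}\LE^0(BC(\im\al))/I_{tr}^{[\al]}$. Tensoring this right-exact sequence up to $C_t^0$ over $\LE^0$, and using that the finite product commutes with $C_t^0\otimes_{\LE^0}-$, gives $\operatorname{coker}(1\otimes\mathrm{tr}')=\Prod{[\al]}C_t^0\otimes_{\LE^0}\bigl(\LE^0(BC(\im\al))/I_{tr}^{[\al]}\bigr)$. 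The commuting square with invertible vertical maps then forces an isomorphism $C_t^0\otimes_{\E^0}(\E^0(BG)/I_{tr})=\operatorname{coker}(1\otimes\mathrm{tr})\cong\operatorname{coker}(1\otimes\mathrm{tr}')$, which is the asserted isomorphism.

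I expect the main obstacle to be the second step — verifying that the character map commutes with the transfer — together with the bookkeeping in the third. The transfer compatibility can be taken from the construction of the character map in \cite{tgcm} (or, in the case $t=0$, reduced to the classical induction formula $\chi_{\Ind_H^G V}(g)=\sum_{xH:\,x^{-1}gx\in H}\chi_V(x^{-1}gx)$ and then propagated up the tower of character maps), but one must be careful that the transfer defining $I_{tr}$ is precisely the covering transfer of $BH\to BG$ and that $\Loops^{n-t}$ genuinely carries it to the covering transfer of $\Loops^{n-t}BH\to\Loops^{n-t}BG$; the rest is the essentially formal identification of conjugacy classes of lifts of $\al$ with the components of the iterated inertia groupoid lying over $BC(\im\al)$.
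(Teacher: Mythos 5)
The paper itself gives no proof of this statement: it is cited as Theorem~2.18 of \cite{subpdiv} and taken as known. So there is no "paper's own proof" to compare against; what you have written is a reconstruction of the argument that lives in \cite{subpdiv}, and your outline matches the structure one finds there — (a) Theorem~\ref{charactermap} applied to both $\ast$ and $G/H$, (b) compatibility of the character map with the transfer along $BH\to BG$, (c) the identification of $\Loops^{n-t}(G/H\mmod G)\to\Loops^{n-t}(\ast\mmod G)$ over each component $BC(\im\al)$ as a disjoint union of covers indexed by $H$-conjugacy classes of lifts $\al'$, and then right-exactness of base change. Your combinatorial step (c) is correct: the stabilizer of a pair $(\al,gH)$ with $g^{-1}\al g$ factoring through $H$ is $gC_H(\im\al')g^{-1}$, the $G$-orbits of such pairs over a fixed $[\al]$ are in bijection with conjugacy classes of lifts, the projection formula makes each transfer image an ideal, and the product decomposition identifies $\im(\mathrm{tr}')$ with $\prod_{[\al]}I_{tr}^{[\al]}$.

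The one place to be careful is step (b), which you correctly flag as the crux. The assertion that "a map of cohomology theories commutes with transfers along finite covers" is not a formality here: the character map is not a map of spectra, it is a natural transformation of $\E^0$-valued functors on finite $G$-CW complexes involving a base-change step, so its compatibility with transfers is a genuine theorem. For $t=0$ this is Theorem~D of \cite{hkr}; for general $t$ it is established in \cite{subpdiv} itself (as the technical lemma feeding into the Theorem~2.18 you are trying to prove), not in \cite{tgcm}. So your attribution of the transfer compatibility to \cite{tgcm} is off, and — more to the point — that compatibility is precisely the non-formal content you would need to supply to make this argument self-contained. Modulo that, the proposal is correct and follows the intended route.
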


\begin{remark}
There is an analogous result for the cohomology theory $\E^0(\Loops^h(-))$:
\[
\E^0(\Loops^h BG)/I_{tr} \cong \Prod{[\al] \in G_{p}^{h}/\sim}\E^{0}(BC(\im \al))/I_{tr}^{[\al]}.
\]
\end{remark}

\subsection{Landing in $K$-theory} \label{sub:K}
In this subsection we prove that the ring $C_{t}^0$ is faithfully flat as an $\LE^0$-algebra. With this result we show that for good groups $G$, extension to $\pi_0(L_{K(1)}(C_1 \wedge K_p))$ identifies the codomain of the character map with a faithfully flat extension of $K_p$. 


Let $\G := \LE^0 \otimes \G_{\E}$ so that $\G_{\LE}$ is the connected component of the identity of $\G$ (Proposition 2.4 of \cite{tgcm}). Recall the following proposition.
\begin{prop}\label{num2} (Proposition 2.17 in \cite{tgcm})
The functor from $\LE^0$-algebras to sets given by
\[
\Iso_{\G_{\LE} / }(\G_{\LE}\oplus\QZ^{n-t},\G) : R \mapsto \Iso_{\G_{\LE} / }(R \otimes \G_{\LE}\oplus\QZ^{n-t},R \otimes \G)
\]
is representable by $C_{t}^0$.
\end{prop}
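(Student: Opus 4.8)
The plan is to exhibit $C_{t}^{0}$ as a filtered colimit of localizations of the rings $\LE^0(B\Lambda_k)$, $\Lambda_k := (\Z/p^k)^{n-t}$ --- which is how it is built in \cite{tgcm}, following \cite{hkr} --- and then to match the functor it corepresents with $\Iso_{\G_{\LE}/}(\G_{\LE}\oplus\QZ^{n-t},\G)$ using the connected--\'etale sequence of $\G$. The construction of $C_{t}^{0}$ enters only through the shape of this colimit; the geometric content is the unwinding of the isomorphism condition.

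First I would unwind the target functor. By Proposition 2.4 of \cite{tgcm} there is a connected--\'etale short exact sequence $0 \to \G_{\LE} \to \G \to \G^{\mathrm{et}} \to 0$ of $p$-divisible groups over $\LE^0$, with $\G^{\mathrm{et}}$ \'etale of height $n-t$. For an $\LE^0$-algebra $R$, an element of $\Iso_{\G_{\LE}/}(\G_{\LE}\oplus\QZ^{n-t},\G)(R)$ is an isomorphism restricting to the identity on $R\otimes\G_{\LE}$; such a map is determined by its restriction to the summand $\QZ^{n-t}$, a homomorphism $\phi:\QZ^{n-t}\to R\otimes\G$, and conversely every $\phi$ for which $(\iota,\phi):R\otimes\G_{\LE}\oplus\QZ^{n-t}\to R\otimes\G$ is an isomorphism arises this way. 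By the five lemma applied to the evident map of short exact sequences, $(\iota,\phi)$ is an isomorphism if and only if the induced map $\QZ^{n-t}\to R\otimes\G^{\mathrm{et}}$ is. Passing to $p^k$-torsion, and using that a homomorphism of $p$-divisible groups is an isomorphism exactly when it is so on every $p^k$-torsion subgroup, such a $\phi$ is the same datum as a compatible system of homomorphisms $\phi_k:\Lambda_k\to R\otimes\G[p^k]$ for which each $(\iota,\phi_k):R\otimes\G_{\LE}[p^k]\oplus\Lambda_k\to R\otimes\G[p^k]$ is an isomorphism of finite flat group schemes.

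Next I would treat the finite levels. The standard identification $\Spec\E^0(B\Z/p^k)\cong\G_{\E}[p^k]$, base changed along $\E^0\to\LE^0$ (valid since $\E^0(B\Z/p^k)$ is finite free over $\E^0$) and combined with the K\"unneth isomorphism, gives a natural identification $\Spec\LE^0(B\Lambda_k)\cong\Hom(\Lambda_k,\G[p^k])$. Over $\LE^0(B\Lambda_k)$ the tautological homomorphism $\Lambda_k\to\G[p^k]$, together with the inclusion of $\G_{\LE}[p^k]$, yields a map of finite free $\LE^0(B\Lambda_k)$-modules $\Sect_{\G[p^k]}\to\Sect_{\G_{\LE}[p^k]\times\Lambda_k}$, of rank $p^{nk}$ on each side since $\G_{\LE}$ has height $t$; let $\delta_k\in\LE^0(B\Lambda_k)$ be its determinant, well defined up to a unit. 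Since a map of finite free modules is an isomorphism exactly when its determinant is a unit, $\LE^0(B\Lambda_k)[\delta_k^{-1}]$ corepresents the subfunctor of those $\phi_k$ making $(\iota,\phi_k)$ an isomorphism. Reading off the transition maps $\LE^0(B\Lambda_k)\to\LE^0(B\Lambda_{k+1})$ from the construction in \cite{tgcm}, one checks the images of the $\delta_k$ are compatible enough that these localizations form a filtered system, whose colimit then corepresents $\Colim{k}$ of the finite-level functors --- that is, $\Iso_{\G_{\LE}/}(\G_{\LE}\oplus\QZ^{n-t},\G)$ by the previous paragraph. The ring $C_{t}^{0}$ is this colimit, up to the completion implicit in the $K(t)$-localized model; one checks, as in \cite{tgcm}, that this completion does not alter the corepresented functor, since the colimit is built from $\LE^0$-flat modules and the functor only sees $p$-power-torsion data.

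The step I expect to be the main obstacle is this last one: identifying the localizing elements $\delta_k$ with those appearing in the construction of $C_{t}^{0}$, checking their compatibility across the colimit, and verifying that the completion used to define $C_{t}^{0}$ is harmless for corepresentability. Everything else --- the moduli-theoretic unwinding via the connected--\'etale sequence, and the fact that inverting a determinant corepresents ``becomes an isomorphism'' --- is formal.
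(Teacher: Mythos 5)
The paper does not prove this statement: it is quoted directly from Proposition~2.17 of \cite{tgcm} with no argument given, so there is no ``paper's own proof'' to compare against. Your proposal is a sound reconstruction of the argument from that reference, with one unnecessary worry and one overestimated obstacle.

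The moduli-theoretic unwinding via the connected--\'etale sequence is correct: a map under $\G_{\LE}$ automatically respects the connected--\'etale filtration (the connected summand $R\otimes\G_{\LE}$ must land in the identity component of $R\otimes\G$), so the five-lemma reduction to the \'etale quotient is legitimate, and passing to $p^k$-torsion recovers the finite-level functors. The identification $\Spec\LE^0(B\Lambda_k)\cong\Hom(\Lambda_k,\G[p^k])$ and the corepresentability of the isomorphism locus by inverting a determinant are the right mechanisms, and a filtered colimit of rings corepresents the corresponding limit of functors, which is $\Iso_{\G_{\LE}/}(\G_{\LE}\oplus\QZ^{n-t},\G)$ by the first reduction.

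Two adjustments. First, the caveat about ``completion implicit in the $K(t)$-localized model'' is a red herring: in the construction recalled in this paper (Subsection~2.8 of \cite{tgcm}), $C_{t,k} = S_k^{-1}(\LE\wedge_{\E}\E^{B\Lambda_k})$, and since $\E^0(B\Lambda_k)$ is finite free over $\E^0$ one has $\pi_0 C_{t,k}\cong S_k^{-1}\LE^0(B\Lambda_k)$ on the nose. The only completion in sight is the one already built into $\LE^0$ itself; the colimit is literally $C_t^0$, not ``up to completion.'' Second, the compatibility of the $\delta_k$ that you flag as the main obstacle is in fact formal: if $(\iota,\phi_{k+1})$ is an isomorphism on $p^{k+1}$-torsion then its restriction to $p^k$-torsion is an isomorphism as well, so the image of $\delta_k$ is already a unit in $\LE^0(B\Lambda_{k+1})[\delta_{k+1}^{-1}]$, and the localizations assemble into a filtered system without further work. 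The genuine content of \cite{tgcm}'s Proposition~2.17 is exactly the finite-level identification and the determinant description of the isomorphism locus, both of which you have.
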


The following result should be compared to Proposition 6.5 of \cite{hkr}.
\begin{prop} \label{p:ff}
The ring $C_{t}^{0}$ is a faithfully flat $\LE^0$-algebra.
\end{prop}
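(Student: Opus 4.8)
The plan is to build $C_t^0$ explicitly as a (transfinite) colimit of finitely presented $\LE^0$-algebras, each of which is visibly free, and then conclude flatness from the colimit and faithfulness from a fiberwise nonemptiness argument. By Proposition \ref{num2}, $C_t^0$ corepresents the functor $R \mapsto \Iso_{\G_{\LE}/}(R\otimes(\G_{\LE}\oplus\QZ^{n-t}), R\otimes\G)$. Unwinding this, a map out of $C_t^0$ is the same as a compatible system, over all $m\geq 1$, of isomorphisms from $(\G_{\LE}\oplus (\Zp{m})^{n-t})$ onto the $p^m$-torsion $\G[p^m]$ of $\G$ that restrict to the identity on $\G_{\LE}[p^m]$. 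Since $\G_{\LE}[p^m]$ and $\G[p^m]$ are both finite free over $\LE^0$ (the formal group over $\LE^0$ is $p$-divisible of height $n-t$ with an \'etale quotient $\QZ^{n-t}$), at each finite stage $m$ this isomorphism functor is corepresented by a finitely presented $\LE^0$-algebra $B_m$: one writes the universal homomorphism between the two finite free modules, imposes that it be an isomorphism compatible with the \'etale structure and with the level-$m-1$ data, and inverts the appropriate determinant. Thus $C_t^0 = \Colim{m} B_m$ as an $\LE^0$-algebra.

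Next I would prove each $B_m$, or rather each transition $B_{m-1}\to B_m$, is faithfully flat; flatness of $C_t^0$ then follows because a filtered colimit of flat algebras is flat, and faithfulness because faithful flatness is preserved under (filtered) composition together with a faithfulness check on the colimit. The cleanest route to the finite stages is to identify $\Spec B_m$ with a scheme of isomorphisms between two finite flat group schemes extending fixed data — concretely, after choosing coordinates it becomes a torsor-like object: the isomorphisms from the \'etale part $(\Zp{m})^{n-t}$ into $\G[p^m]$ lifting the given map on the sub are classified by choosing $p$-power torsion points of the formal part subject to exactness, and this is a product of copies of (deformations of) $\G_{\LE}[p^m]$ with certain determinants inverted, hence finite free, hence faithfully flat over $\LE^0$ once one checks the relevant special fiber is nonempty. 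Equivalently, and perhaps more in the spirit of \cite{hkr} Proposition 6.5, one can present $C_t^0$ via a completed tensor product/colimit of the rings $D_m = \LE^0[\G[p^m]]$-style constructions and use Strickland-type finiteness.

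The main obstacle, and the step I would spend the most care on, is faithfulness: flatness is fairly formal once the colimit presentation is in hand, but to get \emph{faithful} flatness I must show $C_t^0 \otimes_{\LE^0} \kappa \neq 0$ for every residue field $\kappa$ of $\LE^0$ — equivalently that over every field point of $\Spf \LE^0$ there exists at least one isomorphism $\G_{\LE}\oplus\QZ^{n-t} \xrightarrow{\sim} \G$ restricting to the identity on the connected part. Over the generic (characteristic $0$, or rather $p$-inverted) point this is essentially the splitting of a $p$-divisible group of height $(n-t) + (n-t)$... — more precisely one uses that $p^{-1}\LE^0\otimes\G$ is the direct sum of its connected component $\G_{\LE}$ and an \'etale part which becomes constant $\QZ^{n-t}$ after a faithfully flat (even finite \'etale) base change, exactly as in the construction of the HKR ring $C_0^0$. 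Over the closed point one must instead exhibit the isomorphism at the level of each finite truncation; this is where one invokes that $\G[p^m]$ is a finite flat group scheme whose Cartier dual / coordinate ring is free, and that the functor of level structures is itself representable by a ring which is free and nonzero over the residue field — a Drinfeld-level-structure style argument, cf. Strickland \cite{subgroups}. Assembling the finite-stage nonemptiness into nonemptiness of the colimit, and verifying the transition maps $B_{m-1}\to B_m$ are themselves faithfully flat (not merely flat), is the technical heart; once it is done, $C_t^0 = \Colim{m} B_m$ is a filtered colimit of faithfully flat $\LE^0$-algebras with faithfully flat transition maps, hence faithfully flat.
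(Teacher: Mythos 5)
Your overall strategy lines up with the paper's: flatness comes from a filtered colimit presentation, and faithfulness is reduced to nonemptiness of fibers over $\Spec\LE^0$. But you leave the actual content of the faithfulness step — which is the entire point of the proposition — as an unresolved ``technical heart,'' and the gestures you make at it (Drinfeld level structures, propagating faithful flatness up the tower $B_{m-1}\to B_m$) are not the right tools and would not close the gap on their own. First, a filtered colimit of faithfully flat algebras with faithfully flat transitions is indeed faithfully flat, but verifying faithful flatness of each transition is exactly as hard as the original problem; the paper sidesteps the tower entirely once flatness is known and simply proves $\Spec C_t^0\to\Spec\LE^0$ is surjective by exhibiting, for every prime $P\subset\LE^0$, a point of $\Spec C_t^0$ over $P$. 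Second, the argument that actually produces that point is not a level-structure argument but the classification of $p$-divisible groups over an algebraically closed field (Demazure): set $K$ to be the algebraic closure of the residue field at $P$, so $K\otimes\G\cong\G_{\mathrm{for}}\oplus\QZ^{n-i}$ where $i$ is minimal with $u_i\notin P$. The decisive observation, absent from your sketch, is that $i\leq t$ (because $u_t$ is already a unit in $\LE^0$), so $\QZ^{n-i}\cong\QZ^{t-i}\oplus\QZ^{n-t}$ and the splitting has exactly the form Proposition~\ref{num2} classifies. This yields a map $C_t^0\to K$ extending $\LE^0\to K$, whose kernel lies over $P$, finishing surjectivity. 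Without pinning down this inequality and the resulting splitting, your ``nonempty fiber'' claim remains unproved. Two smaller inaccuracies: the building blocks of the colimit are localizations of finitely generated free modules (flat), not themselves free as you assert; and the dichotomy ``generic point / closed point'' does not exhaust $\Spec\LE^0$, whereas the paper's argument handles an arbitrary prime uniformly.
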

\begin{proof}
Note that $C_{t}^0$ is a flat $\LE^0$-algebra since it is constructed as a colimit of algebras each of which is a localization of a finitely generated free module. 

We now show that it is surjective on $\Spec(-)$. Let $P \subset \LE^0$ be a prime ideal. Let $i$ be the smallest natural number such that $u_i \notin P$. Note that $i \leq t$. Now consider 
\[
K := \overline{(\LE^0/P)}_{(0)}
\]
the algebraic closure of the fraction field. There is an isomorphism (pg. 34, \cite{Dem})
\[
K \otimes \G \cong \G_{\text{for}} \oplus \QZ^{n-i}.
\]
The formal part has height $i$ because $u_i$ has been inverted. Now since
\[
\G_{\text{for}} \oplus \QZ^{n-i} \cong \G_{\text{for}} \oplus \QZ^{t-i} \oplus \QZ^{n-t},
\]
Proposition \ref{num2} implies that this is classified by a map $C_{t}^0 \lra{q} K$ that extends the canonical map $\LE^0 \lra{} K$. Now the kernel of $q$ must restrict to $P$.
\end{proof}

Recall the following definition:
\begin{definition}(Definition 7.1, \cite{hkr})
A finite group is \emph{good} if $K(n)^*(BG)$ is generated by transfers of Euler classes of complex representations of subgroups of $G$.
\end{definition}

If $G$ is good then $K(n)^*(BG)$ is concentrated in even degrees (by the remark after Definition 7.1 in \cite{hkr}) and this implies (Proposition 3.5 in \cite{etheorysym}) that $\E^*(BG)$ is free and concentrated in even degrees.

\begin{cor} \label{c:fgp}
For a good group $G$ the ring $\LE^0(BG)$ is finitely generated and projective as an $\LE^0$-module. 
\end{cor}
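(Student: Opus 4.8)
The plan is to deduce this from the corresponding integral statement for $\E^0(BG)$ together with the characterization of $\LE^0(BG)$ via the character map. Since $G$ is good, the remark following Definition 7.1 of \cite{hkr} gives that $K(n)^*(BG)$ is even, hence by Proposition 3.5 of \cite{etheorysym} the module $\E^0(BG)$ is finitely generated and free over $\E^0$. The first step is to pass from $\E^0(BG)$ to $\LE^0(BG)$: localization is smashing, so there is an isomorphism $\LE^0(BG) \cong \LE^0 \otimes_{\E^0} \E^0(BG)$ (one should note here that $BG$ has finitely generated $E$-cohomology concentrated in even degrees, so that the Atiyah--Hirzebruch or universal-coefficient issues do not arise and the base change is clean). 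Since $\E^0(BG)$ is a finitely generated free $\E^0$-module, $\LE^0 \otimes_{\E^0} \E^0(BG)$ is a finitely generated free $\LE^0$-module, and in particular finitely generated and projective.

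That essentially finishes it, but let me record the alternative route that stays inside the transchromatic framework, since it is the one in the spirit of the paper. By Theorem \ref{charactermap} applied with $t$ replaced by $0$ inside height $t < n$ — that is, using the character map out of $\LE$ down to height $0$, or directly using the HKR map of \cite{hkr} at height $t$ — one gets that $C_t^0 \otimes_{\LE^0} \LE^0(BG)$ is identified with a product of copies of a faithfully flat $\LE^0$-algebra indexed by conjugacy classes of homomorphisms from $\Z_p^{\,t}$, hence is finitely generated and projective over $C_t^0$; then faithfully flat descent along $\LE^0 \to C_t^0$ (Proposition \ref{p:ff}) brings finite generation and projectivity back down to $\LE^0(BG)$. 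One must check finite generation descends as well: this follows because $C_t^0$ is faithfully flat, so a module is finitely generated over $\LE^0$ iff its base change is finitely generated over $C_t^0$ provided one already knows the module is, say, flat — and flatness of $\LE^0(BG)$ again comes from base change from the free module $\E^0(BG)$.

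The main obstacle is purely bookkeeping: one must be careful that the base-change isomorphism $\LE^0(BG) \cong \LE^0 \otimes_{\E^0} \E^0(BG)$ genuinely holds, i.e.\ that $L_{K(t)}$ commutes with the relevant suspension spectrum in the needed range. This is where goodness is doing real work — it forces evenness and freeness of $\E^*(BG)$, which kills any $\lim^1$ or torsion obstruction and makes the smashing localization interact cleanly with the (possibly infinite) cell structure of $BG$. Given that input, the rest is formal.
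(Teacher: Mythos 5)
Your main argument rests on the claim that ``localization is smashing,'' hence $\LE^0(BG) \cong \LE^0 \otimes_{\E^0} \E^0(BG)$. This is a genuine gap: $K(t)$-localization is \emph{not} smashing for $0 < t < n$ (only $L_n = L_{E(n)}$ is smashing, by the Hopkins--Ravenel smash product theorem, and $L_{K(t)}$ for $t>0$ famously is not, e.g.\ it fails to commute with infinite wedges). Even for $t=0$ where rationalization is smashing, smashingness concerns localization of smash products, not function spectra such as $F(BG_+,-)$ with $BG$ infinite-dimensional; passing $\LE \wedge_{\E} -$ through the inverse limit over skeleta of $BG$ is a real analytic issue, not bookkeeping. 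You acknowledge this near the end but then dismiss it; establishing such a base change is precisely the content of Proposition \ref{p.basechange} later in the paper, which requires the EKMM universal-coefficient spectral sequence together with the vanishing of the Tate construction in the $K(1)$-local category. Note also that if your base change held you would conclude $\LE^0(BG)$ is finitely generated and \emph{free}, whereas the corollary only claims projective --- a hint that the authors deliberately avoid that stronger (and unproved at this point) assertion.

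The paper's actual argument sidesteps the base change entirely: Theorem \ref{charactermap} gives $C_t^0 \otimes_{\E^0} \E^0(BG) \cong C_t^0 \otimes_{\LE^0} \LE^0(\Loops^{n-t}BG)$, and since $BG$ is itself one component of $\Loops^{n-t}BG$ (the one indexed by the trivial $\alpha$), this exhibits $C_t^0 \otimes_{\LE^0} \LE^0(BG)$ as a \emph{direct summand} of the finitely generated free $C_t^0$-module $C_t^0 \otimes_{\E^0} \E^0(BG)$, hence finitely generated projective over $C_t^0$; faithfully flat descent along $\LE^0 \to C_t^0$ (Proposition \ref{p:ff}) then gives the result. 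Your ``alternative route'' gestures in this direction but misdescribes the character map: it is $C_t^0 \otimes_{\E^0} \E^0(BG)$ (not $C_t^0 \otimes_{\LE^0} \LE^0(BG)$) that decomposes as a product, the indexing is over conjugacy classes of maps from $\Z_p^{n-t}$ (not $\Z_p^t$), the factors are $C_t^0 \otimes_{\LE^0} \LE^0(BC(\im\alpha))$ rather than ``copies of a faithfully flat $\LE^0$-algebra,'' and the crucial summand observation is absent. Without it, the descent step has nothing finitely generated projective to descend from.
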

\begin{proof}
Since $G$ is good $\E^0(BG)$ is finitely generated and free. Theorem \ref{charactermap} implies that $C_{t}^{0}\otimes_{\LE^0} \LE^0(BG)$ is a summand of 
\[
C_{t}^{0} \otimes_{\E^0} \E^0(BG), 
\]
which is finitely generated and free. Now faithfully flat descent for finitely generated projective modules (10.78.3 in \cite{stacks}) and Proposition \ref{p:ff} implies that $\LE^0(BG)$ is finitely generated and projective.
\end{proof}

The cohomology theory $C_{t}^{0} \otimes_{\LE^0} \LE^*(-)$ defined on finite spaces gives rise to a spectrum $C_t$ and thus a cohomology theory $C_{t}^*(-)$ defined on all spaces. 

\begin{lemma}
The ring spectrum $C_t$ is $E_{\infty}$.
\end{lemma}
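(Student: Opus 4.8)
The plan is to exhibit $C_t$ as a localization of an $E_\infty$-ring obtained by a transfinite sequence of $E_\infty$-cells attached to $\LE$, mirroring the explicit construction of $C_t^0$ as a colimit of localizations of finitely generated free $\LE^0$-modules. Recall from \cite{tgcm} that $C_t^0$ is built from $\LE^0$ by adjoining, for each $k$, generators and relations that force an isomorphism $\G_{\LE}\oplus\QZ^{n-t} \cong \G$ after base change; concretely one first forms $\LE^0$-algebras $B_k$ classifying homomorphisms $(\Zp{k})^{n-t} \to \G[p^k]$ hitting the $p^k$-torsion, then inverts a certain Euler-class-type element $e_k$, then passes to the colimit over $k$. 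The key observation is that every operation in this construction has an $E_\infty$ refinement at the spectrum level.

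First I would recall that $\LE = L_{K(t)}\E$ is $E_\infty$, since $\E$ is $E_\infty$ (Goerss--Hopkins--Miller) and Bousfield localization at $K(t)$ is a smashing-free but symmetric monoidal localization, hence preserves $E_\infty$-structures. Next, for each $k$ the scheme of homomorphisms from $(\Zp{k})^{n-t}$ into the $p^k$-torsion subgroup scheme of the formal group is corepresented, on $\LE^0$-algebras, by a finitely generated free $\LE^0$-algebra of the form $\LE^0[x_1,\dots,x_m]/(\text{relations})$; I would lift this to an $E_\infty$-$\LE$-algebra by attaching $E_\infty$-cells: a polynomial generator $x_i$ is realized by the free $E_\infty$-$\LE$-algebra on a degree-$0$ class (a wedge of symmetric powers of $\LE$), and each relation is imposed by coning off the corresponding map of $E_\infty$-$\LE$-algebras, using that $\pi_0$ of a free $E_\infty$-$\LE$-algebra on an even class is a polynomial ring on that class together with (in the $K(t)$-local category) the relevant power-operation-stable completions — here one must check the relations actually hold on $\pi_0$, which they do by the very definition of $B_k$. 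The localization inverting $e_k$ is realized by the standard telescope construction $\LE\text{-alg} \to (e_k^{-1})\LE\text{-alg}$, which preserves $E_\infty$ since it is a smashing localization of module spectra compatible with the multiplicative structure (a filtered colimit of $E_\infty$-$\LE$-algebras along multiplication-by-$e_k$ maps, which is again $E_\infty$ as filtered colimits of $E_\infty$-algebras are computed underlying). Finally, $C_t$ is the filtered colimit over $k$ of these $E_\infty$-$\LE$-algebras, and filtered colimits of $E_\infty$-ring spectra are $E_\infty$; by construction $\pi_0 C_t = C_t^0$ and more generally $C_t^* = C_t^0 \otimes_{\LE^0}\LE^*$, so this spectrum represents the stated cohomology theory.

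The main obstacle I anticipate is the bookkeeping at the level of $\pi_0$ of free $E_\infty$-$\LE$-algebras: in the $K(t)$-local setting, $\pi_0$ of a free $E_\infty$-algebra on an even generator is not simply a polynomial ring but carries extra classes coming from power operations (as in Rezk's and Strickland's analyses), so the naive "attach a polynomial generator, cone off relations" recipe needs care to guarantee that the $\pi_0$ of the resulting $E_\infty$-$\LE$-algebra is exactly $B_k$ and not a larger ring. The cleanest way around this is probably not to build $C_t$ from scratch but to invoke a representability/recognition result: the functor $R \mapsto \Iso_{\G_{\LE}/}(R\otimes\G_{\LE}\oplus\QZ^{n-t}, R\otimes\G)$ of Proposition \ref{num2} is an affine scheme, and more is true — it is the pushout (in derived/spectral affine schemes over $\operatorname{Spf}\LE$) of a diagram of flat maps, and affine morphisms of spectral Deligne--Mumford stacks over an $E_\infty$-base pull back to $E_\infty$-algebras. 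Alternatively, since $C_t^0$ is flat over $\LE^0$, one can cite the result (Lurie, \emph{Elliptic Cohomology I}, or Baker--Richter-type results on $\Gamma$-cohomology obstructions) that a flat $\pi_0$-algebra over an even periodic $E_\infty$-ring lifts uniquely to an $E_\infty$-algebra when the relevant André--Quillen/$TAQ$ obstruction groups vanish, and flatness plus evenness kills those obstruction groups. I would likely present the telescope-and-cells construction as the main argument and remark that the flat-lifting perspective gives an alternative, with the flatness of $C_t^0$ over $\LE^0$ (already established in Proposition \ref{p:ff}) doing the essential work in either approach.
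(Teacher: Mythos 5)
Your approach is genuinely different from the paper's, and the route you mainly advocate has a gap you yourself identify but do not resolve. The paper does not build $C_t$ from $\LE$ by attaching $E_\infty$-cells. Instead it recalls the explicit topological model from Subsection 2.8 of \cite{tgcm}: $C_{t,k} = S_k^{-1}\bigl(L_{K(t)}\E \wedge_{\E} \E^{B\Lambda_k}\bigr)$ with $\Lambda_k = (\Z/p^k)^{n-t}$, and sets $C_t' = \colim_k C_{t,k}$. Every ingredient here is manifestly $E_\infty$-preserving: $\E$ is $E_\infty$ by Goerss--Hopkins--Miller, function spectra $\E^{B\Lambda_k}$ are $E_\infty$-$\E$-algebras, $L_{K(t)}$ and the relative smash product over an $E_\infty$-ring preserve $E_\infty$-structures, inverting a finite set $S_k$ of $\pi_0$-elements does as well, and filtered colimits of $E_\infty$-algebras are $E_\infty$. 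Then, since $C_t$ and $C_t'$ are both Landweber exact with the same coefficient ring, Proposition 2.8 of \cite{hoveystrickland} gives an equivalence. This sidesteps everything you flag as difficult.

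The gap in your primary plan is exactly the one you name: $\pi_0$ of a free $E_\infty$-$\LE$-algebra on an even degree-$0$ class is not a polynomial ring, because of power operations, so the ``attach polynomial generators, cone off relations'' recipe does not obviously produce $C_t^0$ on $\pi_0$; you would have to compute and control the extra power-operation classes at each stage, which is a substantial undertaking that your sketch does not carry out. Your two proposed workarounds also do not close the gap as stated. Presenting the isomorphism scheme as a pushout of flat maps in spectral affine schemes over $\operatorname{Spf}\LE$ still requires you to know the maps in the diagram have $E_\infty$ lifts, which is the same realizability question one level down. And the flat-lifting claim is too strong: flatness of $C_t^0$ over $\LE^0$ does not by itself kill the Goerss--Hopkins/André--Quillen obstruction groups to an $E_\infty$-structure; the standard vanishing results require hypotheses like formal \'etaleness (or the specific Lubin--Tate moduli input), not merely flatness, so you cannot simply cite ``flat plus even implies unique $E_\infty$ lift.'' The cleanest fix is to abandon the cell-attachment framing entirely and use the explicit model $\LE \wedge_{\E} \E^{B\Lambda_k}$ together with the Landweber-exactness uniqueness argument, as the paper does.
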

\begin{proof}
We may handcraft an $E_{\infty}$-ring spectrum $C_{t}'$ with coefficient ring $C_{t}^*$. Let $\Lk = (\Z/p^k)^{n-t}$. Recall from Subsection 2.8 of \cite{tgcm} that $$C_{t}^{0} = \Colim{k} \text{ } \pi_0 C_{t,k},$$ where $C_{t,k} = S_{k}^{-1} (L_{K(1)}\E \wedge_{\E} \E^{B\Lk})$. Recall that $S_k$ is a finite collection of elements in $\pi_0 (\LE \wedge_{\E} \E^{B\Lk})$ determined by the map $\Lk^* \lra{} \G_{et}[p^k]$. Thus $C_{t}' = \Colim{k} \text{ } C_{1,k}$ has the correct homotopy. Now $C_t$ and $C_{t}'$ are both Landweber exact cohomology theories, thus they are determined up to equivalence by their coefficient rings by Proposition 2.8 in \cite{hoveystrickland}.
\end{proof}


The following result is a special case of a result of Hopkins'.
\begin{prop} \cite{coctalos}
The ring $\pi_0(C_1 \wedge K_p)$ represents the scheme $\Iso(\G_{C_1},\G_m)$.
\end{prop}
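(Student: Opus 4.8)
The plan is to identify $\pi_0(C_1 \wedge K_p)$ as the ring corepresenting isomorphisms of formal groups $\G_{C_1} \xrightarrow{\cong} \G_m$, where $\G_{C_1}$ is the formal group of the height-$1$ Landweber exact theory $C_1$ and $\G_m$ is the formal multiplicative group carried by $p$-adic $K$-theory $K_p$. The key point is that $K_p \wedge K_p$ is the universal example of two complex orientations — equivalently of two coordinates on $\G_m$ — and that smashing with $C_1$ replaces one of those copies of $K_p$ with $C_1$. Concretely, $C_1 \wedge K_p$ is a Landweber exact spectrum (being a smash product of such, over $MU$), so by Proposition 2.8 of \cite{hoveystrickland} it is determined by its coefficient ring, and we must compute $\pi_0$.

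First I would recall the standard computation (Hopkins, as in \cite{coctalos}; cf. the analogous height-$n$ statement, Proposition 6.7 of \cite{hkr}, which says $\pi_0 L_{K(n)}(\E \wedge E_n)$ represents $\Iso(\G_{E_n}, \G_{E_n})$) that for a pair of even-periodic Landweber exact theories $F, F'$ with formal groups $\G_F, \G_{F'}$, the smash product $F \wedge F'$ — suitably completed, here at $K(1)$ or $p$-adically, which is why $K_p$ and $L_{K(1)}$ appear — has $\pi_0(F \wedge F')$ corepresenting the functor sending a ring $R$ to the set of pairs (a point of $\Spec \pi_0 F$, a point of $\Spec \pi_0 F'$, an isomorphism of the two pulled-back formal groups over $R$). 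Applying this with $F = C_1$ and $F' = K_p$, and using that $\Spec \pi_0 K_p$ is a point (so $\G_m$ is a fixed formal group rather than a family), the functor of points becomes $R \mapsto \Iso_{/\Spec C_1^0}(\G_{C_1}, \G_m)$, i.e. exactly $\Iso(\G_{C_1}, \G_m)$ in the relative sense over $C_1^0$.

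The second step is to justify that the relevant completion does not change the answer — that is, $\pi_0(C_1 \wedge K_p) = \pi_0 L_{K(1)}(C_1 \wedge K_p)$, or rather that the naturally-occurring $K(1)$-local (or $p$-complete) smash product is the one that corepresents the iso-scheme. This is where the faithful flatness of $C_1^0$ over $\LE^0$ (Proposition \ref{p:ff}) and the explicit presentation $C_1^0 = \Colim{k}\, \pi_0 C_{1,k}$ from the proof that $C_1$ is $E_\infty$ are used: one checks that smashing the tower $C_{1,k}$ with $K_p$ and passing to the colimit computes $\pi_0(C_1 \wedge K_p)$, and that each stage is flat so the colimit is well-behaved. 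The scheme $\Iso(\G_{C_1},\G_m)$ is a torsor for $\Aut(\G_m)$, hence affine and faithfully flat over $\Spec C_1^0$, matching the expected flatness of $\pi_0(C_1\wedge K_p)$.

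The main obstacle I expect is not the formal-group bookkeeping but pinning down precisely which version of the smash product is meant and verifying the colimit/completion interchange cleanly — i.e. showing the $K(1)$-localization (or $p$-completion) is harmless at the level of $\pi_0$ and that $\G_{C_1}$ is genuinely a height-$1$ formal group over $C_1^0$ so that $\Iso(\G_{C_1},\G_m)$ is nonempty after faithfully flat base change (this last point being essentially the content of $\G_{\LE}$ having a multiplicative form after the character-theoretic base change, combined with $C_1^0$ being built to trivialize the étale part). Once that is in hand, the identification with the universal isomorphism is the standard Landweber-exactness argument and is essentially formal.
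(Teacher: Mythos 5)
Your core argument is the same as the paper's: both $C_1$ and $K_p$ are even-periodic Landweber exact, so $\Spec(C_1^0)\to\mathcal{M}_{fg}$ and $\Spec(K_p^0)\to\mathcal{M}_{fg}$ are flat, and the standard fact is that $\Spec\pi_0(F\wedge F')$ is the fiber product $\Spec\pi_0 F\times_{\mathcal{M}_{fg}}\Spec\pi_0 F'$, which here is precisely $\Iso(\G_{C_1},\G_m)$ since $\Spec K_p^0=\Spec\Z_p$ carries the fixed formal group $\hat\G_m$. That is the whole content of the paper's remark after the proposition.

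Where you go wrong is your ``second step'' and the obstacle you flag. The proposition is stated for the \emph{plain} smash product $C_1\wedge K_p$, with no $K(1)$-localization anywhere, and Landweber exactness applies directly to it; there is no colimit/completion interchange to verify. The spectrum $\bar C_1 = L_{K(1)}(C_1\wedge K_p)$ enters the paper only in the subsequent propositions (where the authors need a $K(1)$-local target and faithful flatness over $\Z_p$), not here. Moreover, the assertion you propose to ``justify'' --- that $\pi_0(C_1\wedge K_p)=\pi_0 L_{K(1)}(C_1\wedge K_p)$ --- is not expected to be true: $K(1)$-localization $p$-completes the homotopy, and the whole point of Proposition \ref{p.basechange} is to deal carefully with the discrepancy on spaces like $BG$. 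You have imported a difficulty from a nearby construction into a proposition where it does not arise; strip that out and what remains is exactly the paper's argument.
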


Let $\mathcal{M}_{fg}$ be the moduli stack of formal groups. This result follows from the fact that the maps classifying the formal groups $\Spec(C_{1}^0) \lra{} \mathcal{M}_{fg}$ and $\Spec(K_{p}^{0}) \lra{} \mathcal{M}_{fg}$ are both flat. Further this implies that both of the maps $C_{1}^{0} \lra{} \pi_0(C_1 \wedge K_p)$ and $\Z_p = K_{p}^{0} \lra{} \pi_0(C_1 \wedge K_p)$ are flat.

\begin{prop}
The ring $\pi_0(C_1 \wedge K_p)$ is non-zero and $p \notin \pi_0(C_1 \wedge K_p)^{\times}$.
\end{prop}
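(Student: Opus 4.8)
\emph{Proof plan.} The plan is to produce a ring homomorphism $\pi_0(C_1 \wedge K_p) \to K$ with $K$ a nonzero field of characteristic $p$; this settles both assertions at once, since the target being nonzero forces $\pi_0(C_1 \wedge K_p) \neq 0$, while $p = 0$ in $K \neq 0$ prevents $p$ from being invertible in $\pi_0(C_1 \wedge K_p)$. By the previous proposition, giving such a map of $C_1^0$-algebras amounts to giving a $K$-point of $\Iso(\G_{C_1},\G_m)$, i.e.\ a map $C_1^0 \to K$ together with an isomorphism $K \otimes_{C_1^0} \G_{C_1} \cong \G_m|_K$ of formal groups over $K$.

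To construct this point I would rerun the argument in the proof of Proposition \ref{p:ff}. As $p$ is not a unit in $\LE^0$, pick a prime $P \subset \LE^0$ with $p \in P$ and set $K := \overline{(\LE^0/P)}_{(0)}$, the algebraic closure of the fraction field; this is a nonzero algebraically closed field of characteristic $p$. Since $u_1$ is a unit in $\LE^0 = L_{K(1)}\E^0$, we have $u_1 \notin P$, so in the notation of that proof $i = t = 1$; the proof then produces an isomorphism $K \otimes_{\LE^0} \G \cong \G_{\text{for}} \oplus \QZ^{n-1}$ with $\G_{\text{for}}$ of height $1$, and, via Proposition \ref{num2}, a map $C_1^0 \to K$ extending $\LE^0 \to K$ under which $\G_{C_1} = C_1^0 \otimes_{\LE^0}\G_{\LE}$ pulls back to the connected component $\G_{\text{for}}$ (the factor $\QZ^{n-1}$ being étale).

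Finally, a height $1$ formal group over an algebraically closed field of characteristic $p$ is isomorphic to the formal multiplicative group, so $K \otimes_{C_1^0} \G_{C_1} \cong \G_{\text{for}} \cong \G_m|_K$. This isomorphism is the desired $K$-point of $\Iso(\G_{C_1}, \G_m)$, hence the desired ring map $\pi_0(C_1 \wedge K_p) \to K$, and the proposition follows.

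There is no real obstacle here: the only mildly delicate points are identifying the pullback of $\G_{C_1}$ to $K$ with the formal part $\G_{\text{for}}$ and invoking the classification of height $1$ formal groups over an algebraically closed field. Alternatively one can argue more formally, using that $\pi_0(C_1 \wedge K_p) \cong C_1^0 \otimes_{\mathcal{M}_{fg}} \Z_p$: reducing mod $p$ and using that $\Spec \F_p \to \mathcal{M}_{fg}^{=1}$ is an fppf cover, together with faithful flatness of $C_1^0$ over $\LE^0$ (so that $C_1^0/p \neq 0$), shows that $\pi_0(C_1 \wedge K_p)/p$ is faithfully flat over a nonzero ring, hence nonzero, which again yields both conclusions.
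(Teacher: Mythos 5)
Your proof is correct and takes essentially the same approach as the paper's: produce a nonzero characteristic-$p$ field receiving a ring map from $\pi_0(C_1 \wedge K_p)$, using that any two formal groups of height $1$ over a separably (or algebraically) closed field of characteristic $p$ are isomorphic. The difference is in how the field is obtained as a $C_1^0$-algebra. The paper simply takes $K$ to be the separable closure of the fraction field of $C_1^0/p$ — which is a $C_1^0$-algebra from the outset (and nonzero because $C_1^0$ is faithfully flat over $\LE^0$, where $p$ is not a unit) — and then invokes A2.2.11 of Ravenel's green book to conclude that $\G_{C_1}$ and $\hat\G_m$ become isomorphic over $K$. You instead start from a prime $P \ni p$ in $\LE^0$, take the algebraic closure of the residue field, and rerun the argument of Proposition \ref{p:ff}, using Proposition \ref{num2} to manufacture the map $C_1^0 \to K$. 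This is perfectly valid, just a bit longer, since it re-derives the $C_1^0$-algebra structure on $K$ rather than taking it as given; it has the minor virtue of making the role of the corepresentability of $\Iso_{\G_{\LE}/}(\G_{\LE}\oplus\QZ^{n-1},\G)$ explicit. Your alternative sketch at the end (reduce mod $p$ and use faithful flatness of $C_1^0$ over $\LE^0$) is in fact closer in spirit to the paper's one-line argument.
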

\begin{proof}
To prove both facts it suffices to construct a non-zero ring in which $p$ is not invertible and over which the formal group laws associated to $C_1$ and $K_p$ are isomorphic. The separable closure of the fraction field of $C_{1}^0/p$ does the trick by A2.2.11 in \cite{greenbook}.
\end{proof}

\begin{cor}
The ring $\pi_0(C_1 \wedge K_p)$ is a faithfully flat $\Z_p$-algebra.
\end{cor}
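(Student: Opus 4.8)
The plan is to deduce this from two facts already in hand. First, the map $\Z_p = K_p^0 \to \pi_0(C_1 \wedge K_p)$ is flat: this was recorded just above as a consequence of $\Spec(C_1^0) \to \mathcal{M}_{fg}$ and $\Spec(K_p^0) \to \mathcal{M}_{fg}$ both being flat together with the fact that $\pi_0(C_1 \wedge K_p)$ corepresents the fibre product $\Spec(C_1^0) \times_{\mathcal{M}_{fg}} \Spec(K_p^0)$. Second, the previous proposition says that $\pi_0(C_1 \wedge K_p)$ is non-zero and $p$ is not a unit in it. The only additional ingredient is the elementary observation that a flat algebra $R$ over a local ring $(A,\mathfrak{m})$ is faithfully flat precisely when $R/\mathfrak{m}R \neq 0$.

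So first I would invoke flatness of $\pi_0(C_1 \wedge K_p)$ over $\Z_p$, and then check surjectivity of $\Spec \pi_0(C_1 \wedge K_p) \to \Spec \Z_p$. The space $\Spec \Z_p$ has just the generic point $(0)$ and the closed point $(p)$. Since $p \notin \pi_0(C_1 \wedge K_p)^{\times}$ we have $p \cdot \pi_0(C_1 \wedge K_p) \neq \pi_0(C_1 \wedge K_p)$, so the closed point lies in the image; and since a flat $\Z_p$-module is torsion-free and $\pi_0(C_1 \wedge K_p) \neq 0$, it embeds into $\pi_0(C_1 \wedge K_p) \otimes_{\Z_p} \Q_p$, which is therefore non-zero, so the generic point lies in the image as well. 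A flat ring map that is surjective on spectra is faithfully flat, which is the assertion. (Equivalently, one could argue module-theoretically: $N \mapsto N \otimes_{\Z_p} \pi_0(C_1 \wedge K_p)$ is exact by flatness, and it carries a non-zero $\Z_p$-module to a non-zero module because such an $N$ has either $N \otimes \Q_p \neq 0$ or non-trivial $p$-torsion, and in either case flatness together with $\pi_0(C_1 \wedge K_p)/p \neq 0$ keeps the result non-zero.)

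I do not expect any genuine obstacle: the substantive work lives in the preceding proposition, where a separable closure of the fraction field of $C_1^0/p$ is used to realize the formal group laws of $C_1$ and $K_p$ over a common non-zero ring in which $p$ is not invertible. The present corollary is essentially bookkeeping, repackaging ``flat plus $p$ a non-unit'' as ``faithfully flat over $\Z_p$''.
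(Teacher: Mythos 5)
Your proof is correct and matches the paper's argument in substance: both reduce to the standard criterion (Matsumura's Theorem 7.2) that a flat algebra over the local ring $\Z_p$ is faithfully flat once $p\pi_0(C_1 \wedge K_p) \neq \pi_0(C_1 \wedge K_p)$, together with the flatness and non-invertibility of $p$ established just above. Your extra verification that the generic point of $\Spec\Z_p$ is hit is harmless but redundant once you invoke the local-ring criterion you state at the outset.
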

\begin{proof}
Since $\pi_0(C_1 \wedge K_p)$ is flat over $\Z_p$, no power of $p$ can go to zero in $\pi_0(C_1 \wedge K_p)$. Thus the map from $\Z_p$ is injective. Since $p$ is not invertible $p\pi_0(C_1 \wedge K_p) \neq \pi_0(C_1 \wedge K_p)$, which implies the map is faithfully flat by Theorem 7.2 in \cite{matsumura}.
\end{proof}

Let $\bar{C}_1 = L_{K(1)}(C_1 \wedge K_p)$. Since $C_1 \wedge K_p$ is $L_1$-local, $K(1)$-localization is just $p$-completion. The composite $\Z_p \lra{} \bar{C}_{1}^{0}$ is faithfully flat. This is certainly not  obvious since $\pi_0({C}_{1} \wedge K_p)$ may not be Noetherian. However, it does follow by some recent work on completions (see 1.3 and 1.13 in \cite{hoveysomess} and A.15 in \cite{barthelfrankland}). We wish to thank Tobias Barthel for suggesting we consider dualizability to prove the following result:



\begin{prop} \label{p.basechange}
Let $G$ be a good group and let $\bar C_1 = L_{K(1)}(C_1 \wedge K_p)$. The maps $K_p \rightarrow \bar{C}_1$ and $C_1 \rightarrow \bar{C}_1$ induce isomorphisms
\[
\bar{C}_{1}^{0}\otimes_{\Z_p} K_{p}^{0}(BG) \cong \bar{C}_{1}^{0}\otimes_{C_{1}^{0}} (C_{1}^{0}\otimes_{L_{K(1)}\E^0} L_{K(1)}\E^0(BG)) \cong \bar{C}_{1}^{0}(BG).
\]
\end{prop}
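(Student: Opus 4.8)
The plan is to deduce both isomorphisms from a single base-change principle for dualizable module spectra, the crux being that $\Sigma^\infty_+ BG$ becomes dualizable after $K(1)$-localization. First note that by associativity of the tensor product the middle term of the displayed formula equals $\bar{C}_1^0 \otimes_{L_{K(1)}\E^0} L_{K(1)}\E^0(BG)$, and that $\bar{C}_1 = L_{K(1)}(C_1 \wedge K_p)$ is an $E_\infty$-algebra over each of $K_p$, $C_1$, and — through $C_1$ — $L_{K(1)}\E$, via the evident unit maps, since $C_1$ and $K_p$ are $E_\infty$. So it suffices to prove: for $R \in \{K_p,\ L_{K(1)}\E\}$ and $S = \bar{C}_1$, the map $S^0 \otimes_{R^0} R^0(BG) \to S^0(BG)$ induced by $R \to S$ is an isomorphism.

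I record the inputs for $BG$ with $G$ good. (i) The spectrum $L_{K(1)}\Sigma^\infty_+ BG$ is a dualizable object of $\mathrm{Sp}_{K(1)}$ — this holds for every finite group, since $K(1)^0(BG)$ is a finite-dimensional $\F_p$-vector space; write $DY$ for its dual. (ii) $R^*(BG)$ is finitely generated projective over $R^0$ and concentrated in even degrees: for $R = L_{K(1)}\E$ this is Corollary \ref{c:fgp} together with the evenness that follows from goodness, and for $R = K_p = E_1$ it follows from goodness at height $1$ — so that $K(1)^*(BG)$ is even — and the argument of \cite{etheorysym} applied to the height-$1$ Morava $E$-theory $K_p$, which gives $K_p^*(BG)$ finitely generated, free and even over $\pi_* K_p = \Z_p[u^{\pm 1}]$. (iii) Each of $\pi_* K_p$, $\pi_* L_{K(1)}\E$, $\pi_* C_1$, $\pi_* \bar{C}_1$ is even and $2$-periodic.

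With these in hand: because $R$ and $S$ are $K(1)$-local we have $F(\Sigma^\infty_+ BG, R) \simeq DY \wedge R$ (smash in $\mathrm{Sp}_{K(1)}$), so $F(\Sigma^\infty_+ BG, R)$ is a dualizable $R$-module with homotopy $R^{-*}(BG) \cong R^0(BG) \otimes_{R^0} \pi_* R$ — using evenness of $R^*(BG)$ and $2$-periodicity of $\pi_* R$ — which is flat over $\pi_* R$ since $R^0(BG)$ is projective over $R^0$. The projection formula then gives an equivalence of $S$-modules
\[
S \wedge_R F(\Sigma^\infty_+ BG, R) \;\simeq\; S \wedge_R (DY \wedge R) \;\simeq\; DY \wedge S \;\simeq\; F(\Sigma^\infty_+ BG, S).
\]
Since the homotopy of $F(\Sigma^\infty_+ BG, R)$ is flat over $\pi_* R$, the Künneth (base-change) spectral sequence degenerates and
\[
\pi_*\big(S \wedge_R F(\Sigma^\infty_+ BG, R)\big) \;\cong\; \pi_* S \otimes_{\pi_* R} R^{-*}(BG),
\]
which in degree $0$ reads $S^0 \otimes_{R^0} R^0(BG)$. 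Unwinding the identifications, the resulting isomorphism $S^0 \otimes_{R^0} R^0(BG) \lra{\cong} S^0(BG)$ is precisely the map induced by $R \to S$. Applying this with $(R,S) = (K_p, \bar{C}_1)$ and $(R,S) = (L_{K(1)}\E, \bar{C}_1)$ yields the two isomorphisms of the proposition; the same argument with $(R,S) = (L_{K(1)}\E, C_1)$ also shows that $C_1^0(BG)$ is computed by $C_1^0 \otimes_{L_{K(1)}\E^0} L_{K(1)}\E^0(BG)$, as is implicit in the statement.

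The only non-formal step — and hence the main obstacle — is hypothesis (ii) for $R = K_p$: one must know $K_p^*(BG)$ is finitely generated, free, and even over $\Z_p$. Goodness is, as defined, a condition at the working height $n$, while what is really needed is its height-$1$ incarnation (equivalently, evenness of $K(1)^*(BG)$); for the wreath products $A \wr \Sigma_{p^j}$ that actually occur in this paper this is available, so the argument goes through. A secondary, purely bookkeeping, matter is the handling of $K(1)$-localizations in the projection formula and in identifying $S \wedge_R (DY \wedge R)$ with $F(\Sigma^\infty_+ BG, S)$; this is standard in the closed symmetric monoidal category $\mathrm{Sp}_{K(1)}$ once $L_{K(1)}\Sigma^\infty_+ BG$ is known to be dualizable.
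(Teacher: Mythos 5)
Your proof is correct, and it follows a genuinely different (though cousin) route from the paper's. The paper runs an EKMM universal-coefficient spectral sequence
\[
\Ext_{L_1^*}^{p,q}\bigl(\pi_{-*} L_{K(1)}(L_1 \wedge BG), \bar C_1^*\bigr) \Longrightarrow \pi_{-p-q} F_{L_1}\bigl(L_{K(1)}(L_1 \wedge BG), \bar C_1\bigr),
\]
identifies the abutment with $\bar C_1^*(BG)$ via $K(1)$-locality, shows the $E_2$-page collapses by combining the Greenlees--Sadofsky Tate vanishing with Corollary \ref{c:fgp} to get that $\pi_* L_{K(1)}(L_1 \wedge BG) \cong L_1^{-*}(BG)$ is finitely generated projective, and then uses \emph{algebraic} dualizability of finitely generated projective modules to pull the coefficient ring through the $\Hom$. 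You instead invoke \emph{spectrum-level} dualizability of $L_{K(1)}\Sigma^\infty_+ BG$ directly, apply the projection formula $S \wedge_R F(\Sigma^\infty_+ BG, R) \simeq F(\Sigma^\infty_+ BG, S)$ in $\mathrm{Sp}_{K(1)}$, and close with a Künneth (Tor) spectral sequence degenerating by flatness. The two proofs are dual in flavor ($\Ext/\Hom$ versus $\Tor/\otimes$), and the underlying finiteness input is the same, since the Tate vanishing used by the paper is precisely what makes $L_{K(1)}\Sigma^\infty_+ BG$ dualizable. What your route buys is a slightly more conceptual packaging — the dualizability of the suspension spectrum subsumes both the Tate-vanishing step and the identification of the abutment. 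You also make explicit a point the paper elides with ``the same proof applies to $K_p$'': one needs $K_p^*(BG)$ finitely generated, free and even, which is a height-$1$ statement, whereas \emph{good} is, as defined, a condition on $K(n)^*(BG)$. As you note, this is satisfied for the centralizers $\prod_i A_i \wr \Sigma_{p^{k_i}}$ that actually arise, since those groups are good at every height, so the gap is cosmetic for the purposes of the paper; but it is a genuine point worth flagging.
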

\begin{proof}
For the purposes of this proof let $L_1 := L_{K(1)}\E$. Theorem 4.1 of \cite{ekmm} provides a conditionally convergent spectral sequence
\[
\Ext_{L_{1}^*}^{p,q}(\pi_{-*} L_{K(1)}(L_1 \wedge BG), \bar C_{1}^{*}) \Longrightarrow \pi_{-p-q}(F_{L_1}(L_{K(1)}(L_1 \wedge BG),\bar C_1)).
\]
The target can be rewritten as
\[
F_{L_1}(L_{K(1)}(L_1 \wedge BG),\bar C_1) \simeq F_{L_1}(L_1 \wedge BG,\bar C_1) \simeq F(BG,\bar C_1)
\]
by using the fact $\bar C_1$ is $K(1)$-local. The vanishing of the Tate construction in the $K(1)$-local category \cite{greenlees-sadofsky} and Corollary \ref{c:fgp} imply that
\[
\pi_*L_{K(1)}(L_1 \wedge BG) \cong \pi_*(L_{1}^{BG}) 
\]
is finitely generated and projective. Thus the spectral sequence immediately collapses and we have an isomorphism
\[
\bar C_{1}^{0}(BG) \cong \Hom_{L_{1}^0}(\pi_{0} L_{K(1)}(L_1 \wedge BG),\bar C_{1}^0).
\]
Now finitely generated projective modules are (strongly) dualizable (Example 1.4 \cite{Dold-Puppe}, Section 2 \cite{Axiomatic}). Thus there is an isomorphism
\[
\Hom_{L_{1}^0}(\pi_{0} L_{K(1)}(L_1 \wedge BG),C_{1}^0) \cong \Hom_{L_{1}^0}(\pi_{0} L_{K(1)}(L_1 \wedge BG),L_{1}^0) \otimes_{L_{1}^0} \bar C_{1}^0
\]
and this is isomorphic to 
\[
L_{1}^0(BG) \otimes_{L_{1}^0} \bar C_{1}^0
\]
by the same spectral sequence with $L_1$ in place of $\bar C_1$. The claim follows and the same proof applies to $K_p$ and $\bar C_1$.
\end{proof}


\paragraph{\underline{\emph{Convention}}} Now that we have discussed these points and proved the previous proposition we will return to the second authors conventions in \cite{tgcm} and \cite{subpdiv} and abuse notation and write 
\[
C_{t}^0(X) := C_{t}^0 \otimes_{\LE^0} \LE^0(X)
\]
for all spaces $X$ (even $X=BU(n)$).

\section{Character theory of the unitary group} \label{unitary}

\subsection{The inertia groupoid and the unitary group}

Recall from Subsection \ref{ss:recollections} that the ($p$-adic) inertia groupoid $\Loops$ is defined to be the functor on topological groupoids corepresented by $*\mmod \Z_p$. Since we have taken $G$ to be finite, we can replace $*\mmod \Z_p$ by $*\mmod \Z/p^k$ for $k$ large enough. 

In this subsection we study the inertia groupoid applied to $U(m)$. In order to get a proper grip on $\Loops BU(m)$, we use a ``torsion" version of the inertia groupoid. Thus we define
\[
\Loops^{h}_{k}(X \mmod G) := \Hom_{top.gpd}(* \mmod (\Z/p^{k})^h,X \mmod G),
\]
and we may abuse this notation as described in the previous subsection.

Let $P(m,k)$ be the set of unordered $m$-tuples of $p^k$th roots of unity. For $s \in P(m,k)$ let $s_{\zeta_i}$ be the number of times $\zeta_i$ appears in the tuple. It immediately follows that, for $s \in P(m,k)$,
\[
\sum_{i = 1}^{\infty} s_{\zeta_i} = \sum_{i = 1}^{p^k} s_{\zeta_i} = m.
\]

\begin{prop}
There is an equivalence of spaces
\[
\Loops_{k}(* \mmod U(m)) = \Hom(*\mmod \Z/p^k, * \mmod U(m)) \simeq \Coprod{s \in P(m,k)} * \mmod U(s_{\zeta_1}) \times \ldots \times U(s_{\zeta_{p^k}}). 
\]
\end{prop}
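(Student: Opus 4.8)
The plan is to unwind the internal mapping groupoid $\Hom_{top.gpd}(\ast\mmod \Z/p^k, \ast\mmod U(m))$ using the standard description of the inertia groupoid recalled in Subsection~\ref{ss:recollections}, namely
\[
\Loops_k(\ast\mmod U(m)) \cong \big(\Coprod{\al\in\Hom(\Z/p^k,\, U(m))} \ast^{\im\al}\big)\mmod U(m) \cong \Hom(\Z/p^k,U(m))\mmod U(m),
\]
where $U(m)$ acts by conjugation. So the content of the proposition is purely a statement about the conjugation action of $U(m)$ on $\Hom(\Z/p^k,U(m))$: I want to identify this action groupoid, up to equivalence of topological groupoids (equivalently, weak homotopy equivalence of the realizations $EU(m)\times_{U(m)}\Hom(\Z/p^k,U(m))$), with the disjoint union over $s\in P(m,k)$ of $BU(s_{\zeta_1})\times\cdots\times BU(s_{\zeta_{p^k}})$.

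The key steps, in order, would be: (1) A homomorphism $\al:\Z/p^k\to U(m)$ is the same as a choice of element of order dividing $p^k$ in $U(m)$, i.e.\ a unitary automorphism $g=\al(1)$ with $g^{p^k}=1$; such a $g$ is diagonalizable with eigenvalues that are $p^k$th roots of unity, so the $\mathbb{C}^m$ decomposes as an orthogonal direct sum of eigenspaces $V_{\zeta_i}$, and the function $i\mapsto \dim V_{\zeta_i} = s_{\zeta_i}$ records exactly an element $s\in P(m,k)$ — i.e.\ the set of conjugacy classes (= path components of $\Hom(\Z/p^k,U(m))\mmod U(m)$) is in bijection with $P(m,k)$, which gives the coproduct decomposition. (2) For a fixed $\al$ with invariant $s$, the centralizer of $\im\al$ in $U(m)$ is the subgroup preserving each eigenspace, which is $U(s_{\zeta_1})\times\cdots\times U(s_{\zeta_{p^k}})$; hence the component of the action groupoid containing $\al$ is equivalent to $\ast\mmod C_{U(m)}(\im\al)=\ast\mmod\big(U(s_{\zeta_1})\times\cdots\times U(s_{\zeta_{p^k}})\big)$. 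This is the groupoid-theoretic statement that an action groupoid restricted to a single orbit is equivalent to the one-object groupoid on the stabilizer. (3) Assemble (1) and (2) over all components to get the claimed equivalence.

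The main obstacle — the only place one must be a little careful — is step~(2), or rather the passage from the discrete orbit–stabilizer picture to the topological/homotopical statement: $\Hom(\Z/p^k,U(m))$ is not discrete, so I cannot literally say "the orbit is $U(m)/C$" as sets and be done. What is true is that $\Hom(\Z/p^k,U(m))$ is a disjoint union (over $s\in P(m,k)$) of single $U(m)$-orbits, each orbit being the homogeneous space $U(m)/\big(U(s_{\zeta_1})\times\cdots\times U(s_{\zeta_{p^k}})\big)$ with its manifold topology; this uses that $U(m)$ is compact and the orbit map is a closed submersion, so each orbit is a union of path components of $\Hom(\Z/p^k,U(m))$. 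Granting that, $EU(m)\times_{U(m)}\big(U(m)/C\big)\simeq BC$ in the usual way, and one concludes. I would state step~(1)'s diagonalization and orthogonality of eigenspaces as the elementary linear-algebra input, cite the recollection from Subsection~\ref{ss:recollections} for the $\big(\Coprod \ast^{\im\al}\big)\mmod U(m)$ formula, and spend the bulk of the written proof making the orbit-decomposition of $\Hom(\Z/p^k,U(m))$ precise and identifying the stabilizers.
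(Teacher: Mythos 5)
Your proposal is correct and takes essentially the same route as the paper's proof: diagonalize the order-$p^k$ unitary, read off the multiset of $p^k$th-root-of-unity eigenvalues to get the component indexed by $s\in P(m,k)$, and identify the centralizer with $U(s_{\zeta_1})\times\cdots\times U(s_{\zeta_{p^k}})$. The paper's published proof is terser (it doesn't dwell on the point-set topology of the orbit decomposition of $\Hom(\Z/p^k,U(m))$ the way you do), but the mathematical content is identical.
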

\begin{proof}
The proof of this is elementary. We include a proof for completeness. Every unitary matrix is diagonalizable. If the matrix has order dividing $p^k$ then its eigenvalues are $p^k$th roots of unity. Conjugation by permutation matrices permutes the entries in the diagonal. Thus the conjugacy classes of maps $\Z/p^k \rightarrow U(m)$ are determined by the set of eigenvalues. Now the centralizer of a diagonalized matrix is determined by the number of repeated eigenvalues.
\end{proof}

Let $P(m,k,h)$ be the set of unordered $m$-tuples of ordered $h$-tuples of $p^k$th roots of unity. For $s \in P(m,k,h)$ and $i \in (S^1)^h[p^k]$, let $s_i$ be the number of instances of the $h$-tuple $i$ in $s$. Note that $P(m,k,1) = P(m,k)$.

\begin{prop} \label{loopunitary}
Let $\{i_1, \ldots, i_{p^{h}}\} = (S^1)^h[p^k]$. There is an equivalence of spaces 
\[
\Hom(*\mmod (\Z/p^k)^h, * \mmod U(m)) \simeq \Coprod{s \in P(m,k,h)} * \mmod U(s_{i_1}) \times \ldots \times U(s_{i_{p^{hk}}}). 
\]
\end{prop}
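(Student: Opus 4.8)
The plan is to run the same argument as for the $h=1$ case (the preceding proposition), with ``every unitary matrix is diagonalizable'' upgraded to ``a commuting family of unitary matrices is simultaneously diagonalizable.'' First I would unwind the internal mapping groupoid: since $* \mmod (\Z/p^k)^h$ has a single object, an object of $\Hom_{top.gpd}(* \mmod (\Z/p^k)^h, * \mmod U(m))$ is a homomorphism $\phi : (\Z/p^k)^h \to U(m)$ --- equivalently an ordered $h$-tuple $(M_1,\dots,M_h)$ of pairwise commuting unitary $m \times m$ matrices with $M_j^{p^k}=I$ --- and a morphism $\phi \to \phi'$ is a $g \in U(m)$ conjugating one to the other. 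Hence, exactly as in the decomposition of $\Loops^h BG$ recalled in Subsection \ref{ss:recollections}, this groupoid is equivalent to $\Coprod{[\phi]} * \mmod C_{U(m)}(\im\phi)$, the coproduct over conjugacy classes of such homomorphisms. So the proposition reduces to two bookkeeping tasks: (a) identifying the set of conjugacy classes of homomorphisms with $P(m,k,h)$, and (b) computing the centralizer of a representative of each class.

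For (a) I would argue: by the spectral theorem the commuting normal operators $M_1,\dots,M_h$ are simultaneously unitarily diagonalizable, so every $\phi$ is $U(m)$-conjugate to one landing in the diagonal maximal torus. A diagonal $\phi$ assigns to each of the $m$ coordinate lines an ordered $h$-tuple of eigenvalues, each a $p^k$-th root of unity since $M_j^{p^k}=I$; further conjugation by a permutation matrix permutes these $m$ lines, so the conjugacy class remembers precisely the \emph{unordered} $m$-tuple of ordered $h$-tuples, i.e.\ an element $s \in P(m,k,h)$. Conversely every $s$ is visibly realized, two diagonal homomorphisms of the same type $s$ are conjugate via a permutation matrix, and the type is a genuine conjugacy invariant because the simultaneous eigenspace decomposition $\C^m = \Oplus{i \in (S^1)^h[p^k]} V_i$ (where $M_j$ acts on $V_i$ by the $j$-th coordinate of $i$, and $\dim V_i = s_i$) depends only on the subgroup $\im\phi$.

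For (b), with $\phi$ diagonalized as above, a unitary matrix commutes with every element of the diagonal group $\im\phi$ if and only if it preserves each simultaneous eigenspace $V_i$; hence $C_{U(m)}(\im\phi) \cong \Prod{i \in (S^1)^h[p^k]} U(V_i) = U(s_{i_1}) \times \cdots \times U(s_{i_{p^{hk}}})$, and the same holds for a non-diagonal representative since conjugate subgroups have isomorphic centralizers. Combining (a) and (b) with the decomposition of the mapping groupoid gives the asserted equivalence.

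I do not expect a genuine obstacle here: the whole statement is the spectral theorem for a commuting family of unitaries together with routine accounting, and the $h=1$ case is literally the special case proved just above. The one point I would be careful about is the groupoid-level claim --- that the internal $\Hom$ decomposes as a coproduct of the $* \mmod C(\im\phi)$ up to equivalence of topological groupoids, not merely as a bijection of isomorphism classes --- but this is the same decomposition already invoked for $\Loops^h BG$, so I would simply cite it rather than reprove it.
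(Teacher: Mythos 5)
Your argument is correct and coincides with the second of the two proofs the paper sketches---namely, simultaneous diagonalization of a commuting family of unitary matrices---just written out in full detail. The paper also records an alternative route, reducing to the $h=1$ case via the adjunction between $\Hom$ and $\times$ for topological groupoids, but your spectral-theorem argument is exactly what the authors had in mind.
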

\begin{proof}
This can be seen two ways. It follows from the previous proposition and the adjunction between $\Hom$ and $\times$ for topological groupoids. It also follows from the fact that commuting tuples of unitary matrices can be simultaneously diagonalized.
\end{proof}

\subsection{Character theory and divisors}
Recall from \cite{tgcm} and \cite{subpdiv}, that the \pdiv group associated to $C_{t}^0(\Loops^{n-t}(-))$ is $\G_{C_t}\oplus \QZ^{n-t}$.


Let $E$ be either of the cohomology theories $\E$ or $\LE$.
\begin{prop}
The \pdiv group associated to $E^0(\Loops^{h}(-))$ is $\G_{E} \oplus \QZ^{h}$.
\end{prop}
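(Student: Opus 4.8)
The plan is to reduce the claim to the case $h=1$ by induction, using the fact that the iterated inertia groupoid is a composite $\Loops^h = \Loops \circ \Loops^{h-1}$ together with the compatibility of the construction $E^0(\Loops(-))$ with the passage to the associated \pdiv group. So the first and main task is the base case: identify the \pdiv group of $E^0(\Loops(-))$ with $\G_E \oplus \QZ$. Recall that the \pdiv group attached to a complex-orientable even-periodic theory is the colimit of the finite group schemes $\Spec E^0(B\Z/p^k)$ with transition maps dual to the inclusions $\Z/p^k \hookrightarrow \Z/p^{k+1}$; more precisely, what matters here is that $E^0(\Loops(-))$ is, by the third bullet in the introduction and the torsion version $\Loops_k$ available since $G$ is finite, computed by $E^0(\Loops_k(B\Z/p^j))$ for $k \gg j$, and this unwinds via the decomposition $\Loops_k(\ast\mmod\Z/p^j) \simeq \Coprod{\al\in\Hom(\Z/p^k,\Z/p^j)} B\Z/p^j$ into a finite product $\Prod{}E^0(B\Z/p^j)$, exactly as in the third bullet of the introduction. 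Taking $\Spec$ and then the colimit over $j$ and $k$ produces $\G_E \oplus \QZ$: the $\G_E$ factor comes from the $E^0(B\Z/p^j)$ factors as in the usual Hopkins–Kuhn–Ravenel picture, and the indexing set $\Hom(\Z/p^k,\Z/p^j)$ contributes the constant étale factor $\Z/p^j$, whose colimit is $\QZ$.

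Next I would make the induction step. Assuming $E^0(\Loops^{h-1}(-))$ has associated \pdiv group $\G_E \oplus \QZ^{h-1}$, I apply the base-case analysis to the cohomology theory $E' := E^0(\Loops^{h-1}(-))$ — which is still a cohomology theory on finite $G$-CW-complexes by the first bullet of the introduction, and is $p$-complete, characteristic $0$, and complex oriented with formal group the formal part of $\G_E \oplus \QZ^{h-1}$, i.e. $\G_E$. The base case then gives that $E'(\Loops(-)) = E^0(\Loops^h(-))$ has associated \pdiv group $(\G_E \oplus \QZ^{h-1}) \oplus \QZ = \G_E \oplus \QZ^h$. (One subtlety: $E'$ is not literally even-periodic in the naive sense, but the only structure used is the family of finite flat group schemes $\Spec E'{}^0(B\Z/p^k)$ and their behavior under the torsion loop decomposition, which passes through the product decomposition; alternatively one can run the whole computation in one shot with $(\Z/p^k)^h$ in place of $\Z/p^k$ using Proposition \ref{loopunitary}'s analogue for $\ast\mmod G$, namely the decomposition of $\Loops_k^h(\ast\mmod\Z/p^j)$.)

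The cleanest route, which I would probably write up instead of the induction, is the "one shot" version: use the decomposition
\[
\Loops_k^h(\ast\mmod \Z/p^j) \simeq \Coprod{\al \in \Hom((\Z/p^k)^h,\,\Z/p^j)} B\Z/p^j,
\]
valid for $k$ large relative to $j$, to compute $E^0(\Loops^h B\Z/p^j) \cong \Prod{\Hom((\Z/p^k)^h,\,\Z/p^j)} E^0(B\Z/p^j)$, take $\Spec$, and recognize the result as $(\G_E[p^j]) \times \Hom((\Z/p^k)^h,\Z/p^j)^\vee$; then pass to the colimit over $j$ (the transition maps being induced by $\Z/p^j \hookrightarrow \Z/p^{j+1}$) to obtain $\G_E \oplus \QZ^h$, using that $\Colim{j}\Hom((\Z/p^k)^h,\Z/p^j)^\vee \cong \QZ^h$ independently of the auxiliary $k$.

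The main obstacle I anticipate is purely bookkeeping: being careful that "the \pdiv group associated to a cohomology theory" is well-defined for these iterated theories (they are Landweber exact over $E^0$, so this is fine), and that the colimit over the torsion parameter $k$ used to define $\Loops$ on finite groups interacts correctly with the colimit over $j$ defining the \pdiv group — i.e. confirming the two colimits can be taken in either order and that the étale part genuinely stabilizes to $\QZ^h$ rather than to some quotient. There is no serious geometry here; the content is the decomposition of the iterated inertia groupoid of $\ast\mmod\Z/p^j$, which is the pointwise ($X = \ast$) case of the formula recalled in Subsection \ref{ss:recollections}.
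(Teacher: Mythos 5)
Your ``one-shot'' computation is exactly the paper's proof: apply the decomposition $\Loops^h B\Z/p^k \simeq \Coprod{(\Z/p^k)^h} B\Z/p^k$ to obtain $E^0(\Loops^h B\Z/p^k) \cong \Prod{(\Z/p^k)^h} E^0(B\Z/p^k) \cong \Gamma((\G_E \oplus \QZ^h)[p^k])$, and check the Hopf algebra structure matches. The inductive detour and the auxiliary torsion parameter are harmless but unnecessary, since the paper simply uses that $\Hom(\Z_p^h,\Z/p^k) = (\Z/p^k)^h$.
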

\begin{proof}
We calculate the effect on $* \mmod \Z/p^k$. We have
\[
E^0(\Loops^h(* \mmod \Z/p^k)) \cong E^0(\Loops^h(B\Z/p^k)) \cong \Prod{(\Z/p^k)^h} E^0(B\Z/p^k) \cong \Gamma ((\G_{E} \oplus \QZ^{h})[p^k]).
\]
It is easy to check that the Hopf algebra structure on these rings is isomorphic as well.
\end{proof}

Now let 
\[
\Lk = ((\Z/p^k)^h)^*,
\]
the Pontryagin dual of the abelian group that we use to define $\Loops_{k}^{h}(-)$. There are many divisors of degree $m$ in $\G_{E} \oplus \QZ^{h}$. To get some control over the set, one can consider divisors of degree $m$ in $\G_{E} \oplus \Lk$.

\begin{definition}
A divisor of degree $m$ in $\G_{E} \oplus \Lk$ is a closed subscheme that is finite and flat of degree $m$ over $E$.
\end{definition}

We will use the isomorphism
\[
\Div_m(\G_{E} \oplus \Lk) \cong (\G_{E} \oplus \Lk)^{\times m}/\Sigma_m.
\]
The quotient by $\Sigma_m$ is the scheme-theoretic quotient. Since $(\G_{E} \oplus \Lk)^{\times m}$ is affine, the quotient is affine as well. When $h=0$, $\Div_m(\G_{E})$ is the usual divisors of degree $m$ in the formal group $\G_{E}$.

\begin{prop}
The functor $\Div_{m}(\G_E \oplus \Lk)$ is corepresented by
\[
E^0(\Loops^{h}_{k}BU(m)).
\]
\end{prop}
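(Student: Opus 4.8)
The plan is to compute $E^0(\Loops^h_k BU(m))$ directly using Proposition \ref{loopunitary}, which identifies $\Loops^h_k BU(m)$ with the disjoint union over $s \in P(m,k,h)$ of the classifying spaces $B(U(s_{i_1}) \times \cdots \times U(s_{i_{p^{hk}}}))$, and then match the resulting ring with the ring of functions on $(\G_E \oplus \Lk)^{\times m}/\Sigma_m$. First I would apply $E^0(-)$ to the equivalence of Proposition \ref{loopunitary}; since $E^0(BU(j)) \cong E^0\powser{c_1, \ldots, c_j}$ (the formal divisor of degree $j$ in $\G_E$) and $E^0$ takes finite disjoint unions to products, we get
\[
E^0(\Loops^h_k BU(m)) \cong \Prod{s \in P(m,k,h)} \bigotimes_{i \in (S^1)^h[p^k]} E^0(BU(s_i)).
\]
The right-hand factor $\bigotimes_i E^0(BU(s_i))$ corepresents the functor sending $R$ to tuples $(D_i)_i$ where $D_i$ is a degree-$s_i$ divisor on $R \otimes \G_E$; I would interpret such a tuple as a single divisor on $R \otimes (\G_E \oplus \Lk)$ supported over the component $\{i\}$ of $\Lk$, of total degree $\sum_i s_i = m$.

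Next I would recognize the global structure. The scheme $\Div_m(\G_E \oplus \Lk) \cong (\G_E \oplus \Lk)^{\times m}/\Sigma_m$ decomposes according to how the $m$ points distribute among the $p^{hk}$ connected components of $\G_E \oplus \Lk$ (indexed by elements of $\Lk^* = (\Z/p^k)^h$, equivalently by the roots of unity $i \in (S^1)^h[p^k]$). A point of $\Div_m$ over $R$ is an effective divisor, hence an unordered $m$-tuple of points; partitioning by component type is exactly the data of a function $s: (S^1)^h[p^k] \to \Z_{\geq 0}$ with $\sum s_i = m$, i.e.\ an element of $P(m,k,h)$, together with, for each $i$, a degree-$s_i$ divisor on the component over $i$. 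Since each component over a nonzero element of $\Lk^*$ is a copy of $\G_E$ (translated), and the component over $0$ is $\G_E$ itself, a degree-$s_i$ divisor on it is a point of $\Spec(E^0(BU(s_i)))$. This gives a natural isomorphism of functors
\[
\Div_m(\G_E \oplus \Lk) \cong \Coprod{s \in P(m,k,h)} \Prod{i} \Div_{s_i}(\G_E),
\]
and corepresentability is additive over coproducts in the source (products of corepresenting objects), matching the product formula above. Combining the two displays gives the claim.

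The main obstacle I expect is making the divisor/scheme-theoretic-quotient bookkeeping precise: one must check that the scheme-theoretic quotient $(\G_E \oplus \Lk)^{\times m}/\Sigma_m$ genuinely decomposes as the asserted coproduct, i.e.\ that $\Sigma_m$ permutes the $m$ factors and the induced stratification by "which component each factor lands in" is $\Sigma_m$-equivariant with stabilizers the Young subgroups $\Sigma_{s_{i_1}} \times \cdots \times \Sigma_{s_{i_{p^{hk}}}}$, so that the quotient breaks up as $\coprod_s \prod_i \G_E^{\times s_i}/\Sigma_{s_i} = \coprod_s \prod_i \Div_{s_i}(\G_E)$. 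This is the algebro-geometric incarnation of the combinatorial identity $\sum_i s_i = m$ from the discussion preceding the proposition, and it is exactly parallel to (and compatible with) the topological decomposition of Proposition \ref{loopunitary}; indeed the cleanest route is to note that both sides corepresent the same functor on $E^0$-algebras by comparing the universal properties component-by-component, with the Hopf-algebra / divisor-addition structure playing no role here since we only need an isomorphism of corepresented functors of points. A minor point to address is that $P(m,k,h)$ is finite, so the coproduct is finite and "corepresented by a product" is unambiguous.
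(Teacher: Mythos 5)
Your proof is correct and takes essentially the same route as the paper: both invoke Proposition \ref{loopunitary} together with Strickland's result that $E^0(BU(j))$ corepresents $\Div_j(\G_E)$, and then match the decomposition of $\Loops^h_k BU(m)$ over $P(m,k,h)$ with the decomposition of $(\G_E\oplus\Lk)^{\times m}/\Sigma_m$ by which connected component each of the $m$ points lands in. Your write-up spells out the orbit/stabilizer bookkeeping for the $\Sigma_m$-quotient more explicitly than the paper does (the paper instead explains the component match in terms of an $m$-dimensional representation of $(\Z/p^k)^h$ splitting into $m$ characters), but the underlying argument is the same.
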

\begin{proof}
This is immediate from Proposition \ref{loopunitary} and the fact that, in all of these cases, $E^0(BU(m))$ corepresents divisors of degree $m$ on $\G_{E}$ (Section 9 of \cite{etheorysym}). It is worth noting explicitly what is occuring on the level of connected components. A conjugacy class of maps
\[
(\Z/p^k)^h \lra{} U(m)
\]
determines an $m$-dimensional representation of $(\Z/p^k)^h$ up to isomorphism. The representation decomposes as a sum of $m$ $1$-dimensional representations. This corresponds to $m$ maps $\Z/p^k \lra{} S^1$ which determines $m$ elements in $\Lk$ (counted with multiplicity). The divisor is concentrated on the components of $\G_{E}\oplus\Lk$ corresponding to these elements.
\end{proof}

We may apply the transchromatic character maps to the unitary groups by using $\Loops^{n-t}_{k}$ in place of $\Loops^{n-t}$. 
The character map takes the form
\[
\E^0(BU(m)) \lra{} C_{t}^{0}(\Loops^{n-t}_{k}BU(m)).
\]
These are defined exactly as the character maps for finite groups. There is no difficulty because we are working with the torsion inertia groupoid.

Note that a map $G \lra{f} U(m)$ induces $\Loops^{n-t}_{k}BG \lra{\Loops^{n-t}_{k}f} \Loops^{n-t}_{k}BU(m)$. 

\begin{prop} \label{naturality}
Consider a map $G \lra{f} U(m)$ and let $k$ be large enough so that every map $\Z_p \lra{} G$ factors through $\Z/p^k$. The following square commutes 
\[
\xymatrix{\E^0(\Loops^{h}_{k}BU(m)) \ar[r] \ar[d] & \E^0(\Loops^{h}BG) \ar[d] \\  C_{t}^{0}(\Loops^{h+n-t}_{k}BU(m)) \ar[r] &  C_{t}^{0}(\Loops^{h+n-t}BG).}
\]
\end{prop}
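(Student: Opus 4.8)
The plan is to identify this square with a naturality square of the character transformation and then invoke the functoriality of the character-map construction in the underlying topological groupoid; the only preliminary work is to reconcile the torsion and non-torsion inertia groupoids in the right-hand column. First I would use the hypothesis on $k$: since every homomorphism $\Z_p\to G$ factors through $\Z/p^k$, the canonical maps $\Loops^h(\ast\mmod G)\to\Loops^h_k(\ast\mmod G)$ and $\Loops^{h+n-t}(\ast\mmod G)\to\Loops^{h+n-t}_k(\ast\mmod G)$ are equivalences, so the entire right-hand column may be rewritten using $\Loops^h_k$ and $\Loops^{h+n-t}_k$. Because each inertia groupoid functor is corepresented by the classifying groupoid of an abelian $p$-group, there is a natural equivalence $\Loops^{n-t}_k\circ\Loops^h_k\simeq\Loops^{h+n-t}_k$; under it the left vertical map becomes the character map for $\E$ evaluated at the groupoid $\Loops^h_k(\ast\mmod U(m))$ (built using $\Loops^{n-t}_k$), the right vertical map becomes the same character construction evaluated at $\Loops^h_k(\ast\mmod G)$, and the two horizontal maps are $\E^0(-)$ and $C_t^0(\Loops^{n-t}_k(-))$ applied to the morphism $\phi=\Loops^h_k(Bf)$. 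Thus the square is precisely the naturality square of the character transformation with respect to the morphism of topological groupoids $\phi\colon\Loops^h_k(\ast\mmod G)\to\Loops^h_k(\ast\mmod U(m))$.

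It then suffices to check that the character-map recipe is natural with respect to such morphisms. The character map is assembled from stages, each functorial in the groupoid $X\mmod G$: forming the torsion inertia groupoid, then a localization of the Borel-equivariant $\E$-cohomology inverting the relevant Euler classes, then base change along $\E^0\to C_t^0$. The content of the remark ``defined exactly as the character maps for finite groups; there is no difficulty because we are working with the torsion inertia groupoid'' is that although $U(m)$ is infinite, $\Loops^h_k(\ast\mmod U(m))$ is a \emph{finite} coproduct of groupoids $\ast\mmod U(\vec s)$ by Proposition \ref{loopunitary}, so each stage is defined exactly as in the finite case and remains functorial. Applying this to $\phi$ gives the commutativity.

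The delicate point is the previous paragraph: because $U(m)$ is not finite one cannot simply quote the finite-group naturality as a black box, and one must verify that the Euler-class localizations and the passage to $C_t^0$ behave well on $\E^0(\ast\mmod U(\vec s))$ and are compatible with pullback along morphisms $\ast\mmod K\to\ast\mmod U(\vec s)$ out of finite groups $K$. If one prefers to reduce entirely to the finite-group case, an equivalent route is to factor $f$ as $G\twoheadrightarrow H\hookrightarrow U(m)$ with $H=\im f$ a finite subgroup: the square for $BG\to BH$ is covered by the established naturality of the finite-group character map, while the square for the inclusion $\iota\colon H\hookrightarrow U(m)$ follows from the divisor interpretation above. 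Indeed $\E^0(\Loops^h_kBU(m))$ and $C_t^0(\Loops^{h+n-t}_kBU(m))$ corepresent divisor functors; the unitary character map is the natural transformation of divisor schemes induced by the restriction to $p^k$-torsion of the group-independent isomorphism $C_t^0\otimes_{\E^0}(\G_{\E}\oplus\QZ^h)\cong\G_{C_t}\oplus\QZ^{n+h-t}$; and $\iota^\ast$ acts on divisors by restricting the tautological representation of $U(m)$ along $\iota$, hence commutes with that isomorphism. Pasting the two squares then yields the proposition.
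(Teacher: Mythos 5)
Your proposal is correct and arrives at the same conclusion as the paper, but the paper's argument is considerably shorter and more direct, and that directness dissolves the one point you flag as delicate. The paper simply observes that the character map factors as a \emph{topological} part followed by an \emph{algebraic} part, where the topological part is $\E^0$ applied to the evaluation map of topological groupoids
\[
ev\colon \ast \mmod \Lk \times \Hom(\ast \mmod \Lk, X \mmod G) \longrightarrow X \mmod G.
\]
Since this evaluation is a natural transformation in the groupoid $X\mmod G$ --- it exists and is natural for $U(m)$ just as for any other topological group --- applying $\E^0$ produces a commutative square with no special pleading required, and the algebraic part is a base change, hence automatically natural. Your worry that ``one cannot simply quote the finite-group naturality as a black box'' thus evaporates: nothing about the construction used Serre finiteness of the group, only that $\E^0$ is a contravariant functor on spaces and that $ev$ is natural. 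Your decomposition of $\Loops^h_k(\ast\mmod U(m))$ into finitely many $\ast \mmod U(\vec s)$ is true (Proposition \ref{loopunitary}) but is not what makes the proof go. One further small caveat about your alternative route: you propose handling the inclusion $H=\im f\hookrightarrow U(m)$ via the divisor description of the unitary character map, but that description is Proposition \ref{unitarytheorem}, which in the paper's development is proved \emph{after} and \emph{using} the present naturality statement; invoking it here would require re-proving it independently or reordering the section.
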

\begin{proof}
The character map is the composite of two maps, one is topological and the other is algebraic. The commutativity of the diagram follows from the fact that the topological part of the character map is $\E^0$ applied to an evaluation map. That is, the following diagram induced by the map $f$ commutes:
\[
\xymatrix{\ast \mmod G \ar[d]^{f} & \ast \mmod \Lk \times \Hom(\ast \mmod \Lk, \ast \mmod G) \ar[d] \ar[l]^-{ev} \\ \ast \mmod U(m) & \ast \mmod \Lk \times \Hom(\ast \mmod \Lk, \ast \mmod U(m)). \ar[l]^-{ev} \\
}
\]
\end{proof}

We often say that character maps like these approximate height $n$ cohomology by height $t$ cohomology because when we tensor the domain up to $C_t$ they give an isomorphism. This is \emph{not true} for $U(m)$. However, these maps do have interesting algebro-geometric content.

There is a canonical map of formal groups $\G_{\LE} \lra{} \G_{\E}$ and over $C_{t}^0$ there is a canonical map $\Lk \lra{} \G_{\E}$ for all $k$. Put together this gives
\[
C_{t}^0 \otimes (\G_{\LE}\oplus \Lk)^{\times m}/\Sigma_m \lra{} (\G_{\E})^{\times m}/\Sigma_m,
\]
which we write as $C_{t}^{0} \otimes \Div_m(\G_{\LE}\oplus \Lk) \lra{} \Div_m(\G_{\E})$. Here we are thinking of $\G_{\LE}$ as a formal group so $C_{t}^0 \otimes \G_{\LE}$ is corepresented by $C_{t}^0(BS^1) = C_{t}^0 \otimes_{\LE^0} \LE^0(BS^1)$.


\begin{prop} \label{unitarytheorem}
Let $\Lk = (\Z/p^k)^{n-t}$. The character map 
\[
\E^0(BU(m)) \lra{} C_{t}^{0}(\Loops^{n-t}_{k}BU(m)).
\]
fits into a commutative square
\[
\xymatrix{\E^0(BU(m)) \ar[r]^{\cong} \ar[d] & \Gamma \Div_{m}(\G_{\E}) \ar[d] \\ C_{t}^{0}(\Loops^{n-t}_{k}BU(m)) \ar[r]^-{\cong} & C_{t}^0 \otimes \Gamma \Div_{m}(\G_{\LE} \oplus \Lk).}
\]
\end{prop}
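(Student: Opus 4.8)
The plan is the following. The two horizontal maps in the square are isomorphisms that are already in hand: the top one is Strickland's identification $\E^0(BU(m)) \cong \Gamma\Div_m(\G_{\E})$ (Section 9 of \cite{etheorysym}), and the bottom one is the proposition that $\Div_{m}(\G_{E}\oplus\Lk)$ is corepresented by $E^0(\Loops^{h}_{k}BU(m))$, applied with $E = \LE$ and $h = n-t$, then tensored up along $\LE^0 \to C_{t}^{0}$ (base change of a corepresentable functor is corepresentable). So the whole content is that, under these identifications, the character map agrees with the comorphism of the scheme map $C_{t}^{0}\otimes\Div_m(\G_{\LE}\oplus\Lk) \to \Div_m(\G_{\E})$ assembled from the canonical arrows $\G_{\LE}\to\G_{\E}$ and $\Lk\to\G_{\E}$. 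I would reduce to the case $m=1$ using the block-diagonal inclusion of the maximal torus $j\colon BU(1)^m \to BU(m)$.

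First I would apply Proposition \ref{naturality} to $j$ (taking $k$ as large as needed, which is no constraint for $U(1)^m$), getting a commuting cube whose front face is the square of Proposition \ref{unitarytheorem}, whose back face is the corresponding square for $BU(1)^m$, and whose two side faces are the restriction-along-$j$ squares. Then I would observe: \emph{(a)} the restriction maps $\E^0(BU(m)) \to \E^0(BU(1)^m)$ and $\LE^0(\Loops^{n-t}_{k}BU(m)) \to \LE^0(\Loops^{n-t}_{k}BU(1)^m)$ are injective — upstairs this is the classical inclusion of symmetric power series, injective because $(\G_{\E})^{\times m}\to\Div_m(\G_{\E})$ is faithfully flat (in fact $\E^0(BU(1)^m)$ is free of rank $m!$ over $\E^0(BU(m))$); the analogous statement downstairs follows component-by-component over the decomposition of Proposition \ref{loopunitary}, and survives $\otimes_{\LE^0}C_{t}^{0}$ by flatness (Proposition \ref{p:ff}). \emph{(b)} The back face is the $m$-fold completed tensor power of the $m=1$ square, compatibly with the corepresentability isomorphisms (which are themselves built from the maximal torus): here one uses that the character map is a multiplicative natural transformation of cohomology theories, that $\Loops^{n-t}_{k} = \Hom(\ast\mmod(\Z/p^k)^{n-t},-)$ preserves products, and a Künneth isomorphism (valid because $\LE^0(\Loops^{n-t}_{k}BU(1))$ is a finite product of power-series rings, hence flat over the Noetherian ring $\LE^0$), while on the algebraic side $\Gamma\Div_m(X) = (\Gamma(X)^{\hotimes m})^{\Sigma_m}$ and the torus restriction is the inclusion of $\Sigma_m$-invariants. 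Granting \emph{(a)} and \emph{(b)}, commutativity of the back face (which follows from the $m=1$ case) forces commutativity of the front face, since the two composites around the front face agree after the injective restriction to the $BU(1)^m$ corner.

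It remains to settle $m=1$. Here $\Loops^{n-t}_{k}BU(1)\simeq\Coprod{\lambda\in\Lk}BU(1)$, and I would unwind the construction of the transchromatic character map of \cite{tgcm}: on the $\lambda$-summand it sends the coordinate $x$ on $\G_{\E}$ to the coordinate of $y +_{\G_{\E}}\lambda$, where $y$ is the coordinate on $C_{t}^{0}\otimes_{\LE^0}\G_{\LE}$ and $\lambda\in(C_{t}^{0}\otimes\G_{\E})[p^k]$ is the canonical $p^k$-torsion point attached to $\lambda\in\Lk$ by the isomorphism $\G_{\LE}\oplus\QZ^{n-t}\cong\G$ of Proposition \ref{num2}. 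Under the resulting identification $C_{t}^{0}(\Loops^{n-t}_{k}BU(1))\cong C_{t}^{0}\otimes\Gamma(\G_{\LE}\oplus\Lk)$ this is exactly the comorphism of $C_{t}^{0}\otimes(\G_{\LE}\oplus\Lk) = C_{t}^{0}\otimes\Div_1(\G_{\LE}\oplus\Lk)\to\Div_1(\G_{\E}) = \G_{\E}$ built from $\G_{\LE}\to\G_{\E}$ and $\Lk\to\G_{\E}$, which is the claim for $m=1$.

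The step I expect to be the main obstacle is this last one. Matching the explicitly described divisor map with the character map on $BU(1)$ requires chasing through the definition of the transchromatic character map, and in particular checking that the canonical points $\Lk\to\G_{\E}$ over $C_{t}^{0}$ entering the divisor map are exactly those produced by the isomorphism of Proposition \ref{num2}. The combinatorial bookkeeping in step \emph{(b)} — identifying the components of $\Loops^{n-t}_{k}BU(1)^m$ (ordered $m$-tuples from $\Lk$) with those of $\Loops^{n-t}_{k}BU(m)$ (size-$m$ multisets from $\Lk$, i.e.\ elements of $P(m,k,n-t)$) together with the $\Sigma_m$-action — is routine but deserves to be written out carefully.
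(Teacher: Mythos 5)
Your proposal is correct and follows essentially the same approach the paper takes: restrict along the inclusion of a maximal torus $U(1)^m \subset U(m)$, use injectivity of the two restriction maps (with $C_{t}^{0}$ flat over $\LE^0$ by Proposition~\ref{p:ff}), and reduce to verifying the square for $U(1)$. Where the paper's proof is two sentences and asserts the $U(1)$ case, you spell out the Künneth/multiplicativity reduction from the torus to $U(1)$ and then unwind the character map of \cite{tgcm} on $\E^0(BU(1))$ as $x\mapsto y+_{\G_{\E}}\lambda$ on the $\lambda$-summand, which is indeed the comorphism of $C_{t}^{0}\otimes(\G_{\LE}\oplus\Lk)\to\G_{\E}$; the only slight wrinkle is the parenthetical about choosing $k$ large, which is vacuous for the infinite group $U(1)^m$ — but the needed naturality holds by the same evaluation-map argument as Proposition~\ref{naturality} since both corners use the torsion inertia groupoid $\Loops_{k}^{n-t}$.
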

\begin{proof}
Consider the character map applied to the inclusion of a maximal torus $\mathbb{T} \subset U(m)$. As $\E^0(BU(m))$ injects in $\E^0(B\mathbb{T})$ and $C_{t}^0(\Loops_{k}^{n-t}BU(m))$ injects into $C_{t}^0(\Loops_{k}^{n-t}B\mathbb{T})$, it is enough to check that the character map applied to $S^1$ produces the global sections of 
\[
C_{t}^0 \otimes \G_{\LE} \oplus \Lk \lra{} \G_{\E}
\]
and this is the case.
\end{proof}

\section{Centralizers in symmetric groups} \label{s:Groups}
In this section we develop the theory of centralizers of tuples of commuting elements in wreath products of a finite abelian group and a symmetric group. These arise when studying
\[
\Loops^hB\Sigma_m
\]
and play an important role in the rest of the paper. We show that these centralizers are all products of wreath products of finite abelian groups and symmetric groups. We also analyze $\Loops(-)$ applied to the maps $\Sigma_i \times \Sigma_j \lra{} \Sigma_m$, where $i,j >0$ and $i+j = m$.

For the rest of this section let $A$ be a finite abelian group and let $n\geq 1$ be an integer.
Let $$[n]:= \{1,2,...,n\}$$
and consider the set $${A \times [n]} := \Coprod{1\leq i\leq n} A.$$
The group $A^n$ acts on ${A \times [n]}$ by multiplication coordinate-wise and  the symmetric group $\Sigma_n$ acts on ${A \times [n]}$ by permuting the coordinates.
The two actions fit together to give an action of $A \wr \Sigma_n$ on ${A \times [n]}$.
This action defines a map
$$s:A\wr \Sigma_n \hookrightarrow \Sigma_{|A|n}.$$
The image of the diagonal map

$$z : A \hookrightarrow A\wr \Sigma_n $$
is the center  of $A\wr \Sigma_n$.

Now consider the map
$$d:= s \circ z : A \to \Sigma_{|A|n}.$$
Since the image of $z$ is $Z(A\wr \Sigma_n)$
we have
$$\im s = A \wr \Sigma_n \subset C_{\Sigma_{|A|n}}(\im d). $$
\begin{lemma}\label{l:wr_1}
There is an equality $\im(s) = C_{\Sigma_{|A|n}}(\im d)$ and thus
$$ A \wr \Sigma_n \cong C_{\Sigma_{|A|n}}(\im d). $$
\end{lemma}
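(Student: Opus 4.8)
The plan is to show that $C_{\Sigma_{|A|n}}(\im d)$ is no larger than $\im(s)$, since the reverse inclusion was established just before the statement. The key is to understand the set ${A \times [n]}$ as a module over $\Z[\im d] \cong \Z[A]$: the map $d$ makes ${A \times [n]}$ into a disjoint union of $n$ copies of the regular $A$-set, i.e. as an $A$-set it is $n \cdot A$ where $A$ acts on itself by left multiplication. An element $\sigma \in \Sigma_{|A|n}$ centralizes $\im d$ precisely when $\sigma$ is an automorphism of ${A \times [n]}$ as an $A$-set, so what must be computed is $\Aut_{A\text{-Set}}(n \cdot A)$, and I want to identify this with $A \wr \Sigma_n$ acting via $s$.

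First I would record that $\Aut_{A\text{-Set}}(A) \cong A$, acting by right multiplication — this is the usual description of $A$-set automorphisms of a transitive free $A$-set (an automorphism is determined by the image of the identity element, and every element gives one since the action is free and transitive). Next I would observe that an $A$-set automorphism of $n \cdot A = \Coprod_{1 \le i \le n} A$ must permute the $n$ orbits (connected components), say by a permutation $\tau \in \Sigma_n$, and on each component it is an isomorphism $A \to A$ of $A$-sets, hence right multiplication by some $a_i \in A$. This gives a bijection $\Aut_{A\text{-Set}}(n\cdot A) \cong A^n \rtimes \Sigma_n = A \wr \Sigma_n$, and one checks it is a group isomorphism by matching the multiplication. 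Finally I would verify that this identification is exactly the map $s$: the $A^n$-part of $A\wr\Sigma_n$ acts on ${A\times[n]}$ by coordinatewise multiplication (which is right multiplication on each component, matching the automorphisms just described, up to the standard identification of left and right multiplication on an abelian group), and the $\Sigma_n$-part permutes components, matching $\tau$. Hence $C_{\Sigma_{|A|n}}(\im d) = \{\sigma : \sigma \text{ is an } A\text{-set automorphism of } {A\times[n]}\} = \im(s)$, which together with the inclusion $\im(s) \subseteq C_{\Sigma_{|A|n}}(\im d)$ already noted gives equality, and the isomorphism $A\wr\Sigma_n \cong C_{\Sigma_{|A|n}}(\im d)$ follows since $s$ is injective.

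The main obstacle, such as it is, is bookkeeping rather than conceptual: one must be careful that the centralizer condition really translates into "$A$-set automorphism." Concretely, for $\sigma \in \Sigma_{|A|n}$ and $a \in A$, $\sigma$ commutes with $d(a)$ iff $\sigma(a \cdot x) = a \cdot \sigma(x)$ for all $x \in {A\times[n]}$, which is precisely $A$-equivariance; since this must hold for all $a$, $\sigma$ is an $A$-set automorphism, and conversely. The only subtlety is matching conventions (left vs. right actions, and the precise meaning of "coordinate-wise multiplication" defining $s$), but because $A$ is abelian these conventions coincide and the identification with $s$ is clean. I expect the whole argument to be a few lines once the reduction to $\Aut_{A\text{-Set}}(n \cdot A)$ is made explicit.
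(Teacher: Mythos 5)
Your proof is correct and takes essentially the same approach as the paper: identify $C_{\Sigma_{|A|n}}(\im d)$ with the automorphism group of $A \times [n]$ as an $A$-set under the diagonal action, note this $A$-set is $n$ disjoint copies of the regular $A$-set, and conclude that the automorphism group is $A \wr \Sigma_n$. You simply spell out the bookkeeping (the equivalence between centralizing $\im d$ and $A$-equivariance, $\Aut_{A\text{-Set}}(A) \cong A$, the matching with $s$) that the paper leaves implicit.
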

\begin{proof}
We can consider the set ${A \times [n]}$ as an $A$-set via the diagonal action. Thus $C_{\Sigma_{|A|n}}(\im d)$ is just the group of automorphisms of ${A \times [n]}$ as an $A$-set.
  As an $A$-set ${A \times [n]}$ is a disjoint union of $n$-copies of $A$ since the group of automorphisms of $A$ as an $A$-set is isomorphic to $A$. The group of automorphisms of $n$-copies is $ A \wr \Sigma_n $.
\end{proof}

Let $h\geq 0$ be an integer and let $$\alpha:\Z^h \to A \wr \Sigma_n$$ be a map. We denote by $\tilde{\alpha}$ the map
$$\tilde{\alpha}:= (s\circ \alpha)\oplus d : \Z^h \oplus A \to \Sigma_{n|A|}. $$
Now consider $\tilde{\alpha}$ as an action of $\Z^h \oplus A$ on ${A \times [n]}$.
Given an element $x\in {A \times [n]}$ and a map $\al$ we define the $\emph{type}$ of $x$ to be the unique surjection
$$t_x: \Z^h \oplus A \twoheadrightarrow A_{t_x}$$ with kernel equal to the stabilizer of $x$. It easy to see that the map $t_x$ induces an inclusion
$$A \hookrightarrow A_{t_x}.$$
Note that if $x$ has type $t$ then so does any other element in the $\tilde{\alpha}$-orbit $x$.
Thus given an $\tilde{\alpha}$ orbit $O$ it makes sense to speak of the type of $O$.
 Given a type $t:\Z^h \oplus A \twoheadrightarrow A_t$  we denote by $N_t$ the number of $\tilde{\alpha}$-orbits
with $t$ as a type.
\begin{definition} \label{d:monotypical}
We denote by $T(\alpha)$ the set of all types that occur in ${A \times [n]}$ (i.e. types $t$ with $N_t >0$).
We call a map $\alpha:\Z^h\lra{} A\wr \Sigma_n$ \emph{monotypical} if $|T(\alpha)|=1$.
\end{definition}
\begin{remark}
When $A = e$ this is equivalent to saying that the $\Z^h$-set of size $n$ classified by $\al$ is isomorphic to a union of isomorphic transitive $\Z^h$-sets.
\end{remark}

\begin{lemma}\label{l:cent_1} There is an isomorphism
$$C_{A\wr \Sigma_{n}}(\im \alpha) \cong \prod \limits_{t\in T(\alpha)} A_t \wr \Sigma_{N_{t}}. $$
\end{lemma}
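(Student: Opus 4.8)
The plan is to identify $C_{A\wr\Sigma_n}(\im\alpha)$ with the automorphism group of the finite $\Z^h\oplus A$-set $A\times[n]$ (under the action $\tilde\alpha$), and then to compute that automorphism group from the orbit decomposition. For the first step, Lemma~\ref{l:wr_1} identifies $s$ with an isomorphism of $A\wr\Sigma_n$ onto $C_{\Sigma_{n|A|}}(\im d)$, i.e.\ onto the group of automorphisms of $A\times[n]$ as an $A$-set under the diagonal action $d$. Since $s$ is injective it carries $\im\alpha\subseteq A\wr\Sigma_n$ isomorphically onto $\im(s\circ\alpha)\subseteq\Sigma_{n|A|}$, so one gets
\[
C_{A\wr\Sigma_n}(\im\alpha)\;\cong\;C_{\Sigma_{n|A|}}(\im d)\cap C_{\Sigma_{n|A|}}(\im(s\circ\alpha))\;=\;C_{\Sigma_{n|A|}}(\langle\im d,\im(s\circ\alpha)\rangle).
\]
Because $\im z$ is central in $A\wr\Sigma_n$, the subgroups $\im d$ and $\im(s\circ\alpha)$ commute, and $\tilde\alpha(v,a)=(s\circ\alpha)(v)\,d(a)$ shows they generate precisely $\im\tilde\alpha$. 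Hence $C_{A\wr\Sigma_n}(\im\alpha)$ is the group of permutations of $A\times[n]$ commuting with $\im\tilde\alpha$, which is exactly $\Aut_{\Z^h\oplus A}(A\times[n])$.

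Next I would decompose $A\times[n]$ into $\tilde\alpha$-orbits and group them by type. By the definition of the type, an orbit $O$ of type $t\colon\Z^h\oplus A\twoheadrightarrow A_t$ is a transitive $\Z^h\oplus A$-set all of whose point stabilizers equal $\ker t$, so $O\cong A_t$ as a $\Z^h\oplus A$-set (with $\Z^h\oplus A$ acting through $t$ and translation). Elements of a common orbit have conjugate — hence, since $\Z^h\oplus A$ is abelian, equal — stabilizers, so the type of an orbit is well defined, and assembling the $N_t$ orbits of each type $t\in T(\alpha)$ gives
\[
A\times[n]\;\cong\;\coprod_{t\in T(\alpha)}A_t^{\sqcup N_t}
\]
as $\Z^h\oplus A$-sets. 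A $G$-equivariant automorphism permutes the orbits and carries each orbit to an isomorphic one, hence to one of the same type; since distinct stabilizers give non-isomorphic transitive sets over the abelian group $\Z^h\oplus A$, orbits of distinct types are non-isomorphic, so $\Aut_{\Z^h\oplus A}$ of the disjoint union is $\prod_{t\in T(\alpha)}\Aut_{\Z^h\oplus A}(A_t)\wr\Sigma_{N_t}$. Finally $\Aut_{\Z^h\oplus A}(A_t)\cong A_t$, once more because $\Z^h\oplus A$ is abelian, so $\Aut_G(G/H)\cong N_G(H)/H=G/H$. Combining the three displays yields
\[
C_{A\wr\Sigma_n}(\im\alpha)\;\cong\;\prod_{t\in T(\alpha)}A_t\wr\Sigma_{N_t}.
\]

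I expect the only genuinely delicate point to be the first step — the identification of the centralizer of $\im\alpha$ inside $A\wr\Sigma_n$ with the group of $\Z^h\oplus A$-equivariant automorphisms of $A\times[n]$, which rests on Lemma~\ref{l:wr_1} together with the centrality of $\im z$. Once that reduction is in place, the remaining two steps are the routine bookkeeping of $G$-sets, using repeatedly that $\Z^h\oplus A$ is abelian so that transitive summands are classified by their stabilizers and have abelian automorphism groups.
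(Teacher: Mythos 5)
Your proposal is correct and takes essentially the same approach as the paper: identify $C_{A\wr\Sigma_n}(\im\alpha)$ with $\Aut_{\Z^h\oplus A}(A\times[n])$ via Lemma~\ref{l:wr_1}, decompose the set into orbits grouped by type, and read off the wreath-product structure of the automorphism group. The only cosmetic difference is that the paper first pre-composes $\tilde\alpha$ with a surjection $\Z^{h+h'}\twoheadrightarrow\Z^h\oplus A$ so as to reduce to the case $A=0$ and work over a free abelian group, whereas you work directly with $\Z^h\oplus A$ and fill in the orbit-counting argument the paper leaves terse.
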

\begin{proof}
Choose some  $h' \geq 0$  and a surjection $\Z^{h'} \to A$.
Denote by
$$\alpha': \Z^h \oplus \Z^{h'} \to \Sigma_{|A|n}$$
the resulting map.
By Lemma ~\ref{l:wr_1} we have isomorphisms
$$C_{A\wr \Sigma_{n}}(\im \alpha) \cong C_{\Sigma_{n|A|}}(\im \tilde{\alpha}) \cong C_{\Sigma_{n|A|}}(\im \alpha').$$
Also we have a natural bijection
$$T(\alpha) \to  T(\alpha')$$
mapping
$$t \mapsto t'$$
with $A_t \cong A_{t'}$ and $N_{t} = N_{t'}.$
Thus it is enough to consider the case of $A = {0}$.
In this case ${A \times [n]}$ should be considered as a $\Z^{h}$-set
and $C_{\Sigma_n}(\im \alpha)$ is just the group of automorphisms of ${0\times [n]}\cong [n]$ as a $\Z^{h}$-set.
But now $[n]$ decomposes as a disjoint union of $\Z^{h}$-orbits such that there are exactly
$N_t$ of type $t$.
\end{proof}

\begin{lemma}
Let $\alpha: \Z^h \to A \wr \Sigma_n$ be a map, then $\alpha$ in monotypical if and only if the action of $C_{A \wr \Sigma_n}(\im \alpha)$ on ${A \times [n]}$ is transitive.
\end{lemma}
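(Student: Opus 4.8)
The plan is to reinterpret the centralizer $C_{A\wr\Sigma_n}(\im\alpha)$ as the automorphism group of $A\times[n]$ as a $(\Z^h\oplus A)$-set and then read transitivity off the orbit decomposition. First I would record, exactly as in the proof of Lemma~\ref{l:wr_1}, that since $d(A) = s(z(A))$ is the center of $\im s$, the subgroup $\im\tilde\alpha$ is generated by $\im(s\circ\alpha)$ together with $\im d$, so that
\[
C_{\Sigma_{n|A|}}(\im\tilde\alpha) = \im s \cap C_{\Sigma_{n|A|}}(\im(s\circ\alpha)) = s\big(C_{A\wr\Sigma_n}(\im\alpha)\big),
\]
the last equality because $s$ is an injective homomorphism. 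Thus $s$ restricts to an isomorphism $C_{A\wr\Sigma_n}(\im\alpha) \lra{\cong} C_{\Sigma_{n|A|}}(\im\tilde\alpha)$, and the target is precisely the group of bijections of $A\times[n]$ commuting with the $\tilde\alpha$-action of $\Z^h\oplus A$, i.e. $\Aut_{\Z^h\oplus A}(A\times[n])$. Since $C_{A\wr\Sigma_n}(\im\alpha)$ acts on $A\times[n]$ through $s$ by definition, this identification intertwines the two actions. So the statement reduces to: for the abelian group $G := \Z^h\oplus A$ acting on $Y := A\times[n]$ via $\tilde\alpha$, the group $\Aut_G(Y)$ acts transitively on $Y$ if and only if all $G$-orbits in $Y$ are mutually isomorphic.

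Next I would use that $G$ is abelian: every stabilizer is then normal, so the orbit of $x$ is canonically the $G$-set $A_{t_x} = G/\ker(t_x)$, and two orbits are isomorphic as $G$-sets exactly when they share a type (equivalently, a stabilizer). Decomposing $Y \cong \coprod_{t\in T(\alpha)} N_t\cdot A_t$ as in Lemma~\ref{l:cent_1}, any element of $\Aut_G(Y)$ carries each orbit to an isomorphic orbit and hence preserves types; so if $|T(\alpha)| \geq 2$ there are points of $Y$ lying in orbits of distinct types, and no element of the centralizer moves one to the other, giving non-transitivity. For the converse, if $\alpha$ is monotypical then $Y \cong A_t\times[N_t]$ for the unique type $t$, and by Lemma~\ref{l:cent_1} the centralizer is $A_t\wr\Sigma_{N_t}$, with $\Sigma_{N_t}$ permuting the $[N_t]$-factor transitively and the $i$-th copy of $A_t$ acted on simply transitively by the translation action of its $\Aut_G(A_t) = A_t$ factor; given $(a,i),(b,j)\in A_t\times[N_t]$, first apply a permutation carrying $i$ to $j$ and then the translation in the $j$-th $A_t$-factor carrying the image of $a$ to $b$, so the action is transitive.

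I do not expect a genuine obstacle here: the argument is the standard structure theory of automorphism groups of finite $G$-sets, already packaged in Lemmas~\ref{l:wr_1} and~\ref{l:cent_1}. The one point that deserves a sentence of care is the compatibility of the isomorphism $C_{A\wr\Sigma_n}(\im\alpha) \cong \Aut_G(A\times[n])$ with the two $C_{A\wr\Sigma_n}(\im\alpha)$-actions on $A\times[n]$ — one coming from the definition via $s$, the other the tautological action of an automorphism group — and this is exactly what the display in the first paragraph takes care of, since it identifies $s\big(C_{A\wr\Sigma_n}(\im\alpha)\big)$ with $C_{\Sigma_{n|A|}}(\im\tilde\alpha) = \Aut_G(A\times[n])$ inside $\Sigma_{n|A|}$.
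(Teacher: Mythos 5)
Your proof is correct and follows essentially the same route as the paper's: the forward direction rests on the observation that the centralizer preserves types, and the converse uses the identification $C_{A\wr\Sigma_n}(\im\alpha)\cong A_t\wr\Sigma_{N_t}$ from Lemma~\ref{l:cent_1} together with the standard transitive action of that wreath product on $A_t\times[N_t]$. You have simply made explicit the intermediate identification of the centralizer with $\Aut_{\Z^h\oplus A}(A\times[n])$ that the paper leaves implicit.
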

\begin{proof}
Since the action of $C_{A \wr \Sigma_n}(\im \alpha)$ on ${A \times [n]}$ preserves types it cannot be transitive if $\alpha$ is not monotypical. However if $\alpha$ is monotypical of type $t$ it is easy to see that the action of $C_{A \wr \Sigma_n}(\im \alpha) \cong A_t \wr \Sigma_{N_t}$ is transitive. Note that
in this case $$N_t |A_t| = n|A|. $$
\end{proof}

Let $X \coprod Y = \{1,2,...,n\}$ be a non trivial partition and denote by $$\Sigma_{X,Y} \subset \Sigma_n $$
the subgroup of permutations preserving the partition $\{X,Y\}$.

Let $\alpha : \Z^h \to A \wr \Sigma_n$ be a map. Consider the action $\tilde{\alpha}$ of
$\Z^h \oplus A$ on ${A \times [n]}$. Since for every $i \in [n]$,  $A$ acts transitively on $A \times \{i\}$ we have that any $\tilde{\alpha}$-orbit $o$ is of the form $A \times S_o$ for some subset
$S_o \subset [n]$. The sets $S_o$ for all the orbits of $\tilde{\alpha}$ form a partition $$\Coprod{o} S_o = [n].$$
We shall denote this partition by $P_\alpha:=\{S_1,...,S_{k_\alpha}\}$, where $k_{\alpha}$ is the number of $\tilde{\alpha}$ orbits.

\begin{definition}
We say that a map $\alpha : \Z^h \to A \wr \Sigma_n$ is \emph{well-formed}
if $P_{\alpha}= \{S_1,...,S_{k_\alpha}\},$ is such that every $S_i$ is a set of consecutive numbers.
\end{definition}

\begin{lemma}
Every map $\alpha : \Z^h \to A \wr \Sigma_n$ is conjugate to a well-formed map.
\end{lemma}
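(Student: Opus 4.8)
The plan is to conjugate $\alpha$ by a permutation $\sigma$, viewed inside $A\wr\Sigma_n$ via the ``symmetric'' factor $\Sigma_n\hookrightarrow A\wr\Sigma_n$, chosen so as to rearrange the blocks of the partition $P_\alpha$ into intervals of consecutive integers. The whole point is that conjugation by such a $\sigma$ has a completely transparent effect on $P_\alpha$, namely it relabels the underlying set $[n]$.

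First I would record how $\widetilde{(\cdot)}$ behaves under conjugation by $\sigma\in\Sigma_n\subset A\wr\Sigma_n$. Because $d=s\circ z$ takes values in $Z(A\wr\Sigma_n)$, the element $d(a)$ commutes with $s(\sigma)$ for every $a\in A$; hence for $v\in\Z^h$ and $a\in A$ one has
\[
\widetilde{\sigma\alpha\sigma^{-1}}(v,a)=s(\sigma)s(\alpha(v))s(\sigma)^{-1}\,d(a)=s(\sigma)\big(s(\alpha(v))d(a)\big)s(\sigma)^{-1}=s(\sigma)\,\tilde\alpha(v,a)\,s(\sigma)^{-1}.
\]
In other words, the action $\widetilde{\sigma\alpha\sigma^{-1}}$ of $\Z^h\oplus A$ on ${A \times [n]}$ is obtained from $\tilde\alpha$ by conjugating by the permutation $s(\sigma)$ of ${A \times [n]}$. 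Since $s(\sigma)$ carries the block $A\times\{i\}$ onto the block $A\times\{\sigma(i)\}$, it carries each $\tilde\alpha$-orbit $A\times S_o$ onto the $\widetilde{\sigma\alpha\sigma^{-1}}$-orbit $A\times\sigma(S_o)$. Therefore $P_{\sigma\alpha\sigma^{-1}}=\{\sigma(S_1),\dots,\sigma(S_{k_\alpha})\}$.

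Second, I would use the fact that $P_\alpha=\{S_1,\dots,S_{k_\alpha}\}$ is a partition of $[n]$, so $\sum_i|S_i|=n$. Fixing any enumeration of the blocks, there is a bijection $\sigma$ of $[n]$ with
\[
\sigma(S_1)=\{1,\dots,|S_1|\},\quad \sigma(S_2)=\{|S_1|+1,\dots,|S_1|+|S_2|\},\quad\dots,
\]
each $\sigma(S_i)$ being the appropriate block of consecutive integers. By the first step, $P_{\sigma\alpha\sigma^{-1}}$ then consists of sets of consecutive numbers, i.e. $\sigma\alpha\sigma^{-1}$ is well-formed, and it is conjugate to $\alpha$ in $A\wr\Sigma_n$.

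For a statement of this size the plan is essentially the proof; the only point requiring care — and the only genuine ``obstacle'' — is the first step, the compatibility of conjugation with the orbit partition. That step rests squarely on the centrality of $d(A)=Z(A\wr\Sigma_n)$, which is exactly what guarantees that $\widetilde{\sigma\alpha\sigma^{-1}}$ is an honest $s(\sigma)$-conjugate of $\tilde\alpha$ on ${A \times [n]}$ (rather than being twisted by the $\Sigma_n$-action on the $A$-coordinates), and hence that the orbits transform by the simple rule $A\times S_o\mapsto A\times\sigma(S_o)$.
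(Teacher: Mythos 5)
Your argument is correct; the paper's own proof is simply the word ``Clear,'' and what you have written is precisely the straightforward conjugation argument the authors left to the reader. The observation that centrality of $d(A)$ makes $\widetilde{\sigma\alpha\sigma^{-1}}=s(\sigma)\tilde\alpha s(\sigma)^{-1}$, so that $P_{\sigma\alpha\sigma^{-1}}=\sigma(P_\alpha)$, is exactly the point that makes the lemma obvious.
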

\begin{proof}
Clear.
\end{proof}

\begin{lemma} \label{nonmono}
Let $\alpha : \Z^h \to A \wr \Sigma_n$ be a map. If $\alpha$ is not monotypical then
there exists some partition $X \coprod Y = \{1,2,...,n\}$ such that the map
$$\alpha: \Z^h \to A \wr \Sigma_n$$ factors through the inclusion
$$A \wr\Sigma_{X,Y} \subset A \wr \Sigma_n $$
and
$$C_{A \wr\Sigma_{X,Y}}(\im \alpha)  = C_{A \wr\Sigma_n}(\im \alpha).$$
\end{lemma}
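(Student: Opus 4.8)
The plan is to use the orbit decomposition of $A\times[n]$ under $\tilde\alpha$ that was set up just before the statement. Since $\alpha$ is not monotypical, the set of types $T(\alpha)$ has at least two elements; fix a type $t_0\in T(\alpha)$ and group the $\tilde\alpha$-orbits according to whether their type equals $t_0$ or not. After replacing $\alpha$ by a conjugate, we may assume $\alpha$ is well-formed, so each orbit is of the form $A\times S_i$ with the $S_i$ consecutive blocks partitioning $[n]$. Let $X\subseteq[n]$ be the union of those blocks $S_i$ whose orbit has type $t_0$, and let $Y=[n]\setminus X$ be the union of the remaining blocks. Both $X$ and $Y$ are nonempty because $|T(\alpha)|\geq 2$ forces at least one orbit of type $t_0$ and at least one orbit of a different type.

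The first thing I would check is that $\alpha$ factors through $A\wr\Sigma_{X,Y}$. This is immediate: the image of $\alpha$ (indeed of $s\circ\alpha$) permutes $\tilde\alpha$-orbits among themselves, and since an element of $\im\alpha$ acts on $A\times[n]$ commuting with the $A$-action and compatibly with $\tilde\alpha$, it can only send an orbit to an orbit of the same type. Hence it preserves the decomposition $A\times[n] = (A\times X)\coprod (A\times Y)$, i.e. it preserves the partition $\{X,Y\}$ of $[n]$, which is exactly the condition to lie in $A\wr\Sigma_{X,Y}$.

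Next I would prove the centralizer equality. The inclusion $C_{A\wr\Sigma_{X,Y}}(\im\alpha)\subseteq C_{A\wr\Sigma_n}(\im\alpha)$ is trivial. For the reverse, by Lemma~\ref{l:wr_1} the group $C_{A\wr\Sigma_n}(\im\alpha)\cong C_{\Sigma_{n|A|}}(\im\tilde\alpha)$ is the automorphism group of $A\times[n]$ as a $(\Z^h\oplus A)$-set (via $\tilde\alpha$). Any such automorphism permutes the $\tilde\alpha$-orbits and, being an isomorphism of $(\Z^h\oplus A)$-sets, must preserve the type of each orbit; hence it sends $A\times X$ to itself and $A\times Y$ to itself, so it lies in $\Aut_{\Z^h\oplus A}(A\times X)\times\Aut_{\Z^h\oplus A}(A\times Y)$. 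In particular its underlying permutation of $[n]$ preserves $\{X,Y\}$, so it lies in $A\wr\Sigma_{X,Y}$. This proves $C_{A\wr\Sigma_n}(\im\alpha)\subseteq C_{A\wr\Sigma_{X,Y}}(\im\alpha)$, giving equality.

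The only mild subtlety — and the step I would be most careful about — is the claim that an automorphism of $A\times[n]$ as a $(\Z^h\oplus A)$-set must fix the type of each orbit, together with the bookkeeping that types are exactly what distinguishes the blocks assigned to $X$ versus $Y$. This is where the precise definition of type (the surjection with kernel the stabilizer of a point) does the work: isomorphic orbits have isomorphic point-stabilizers, hence the same type, and the well-formedness reduction is what lets us phrase "preserves $\{X,Y\}$" as a statement about consecutive blocks rather than an arbitrary reindexing. Everything else is formal.
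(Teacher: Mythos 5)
Your proof is correct and follows essentially the same route as the paper: pick a type $t_0$, observe that the set of points of that type is of the form $A\times X$ for a nonempty proper subset $X\subseteq[n]$ (the paper writes $B_{t_0}=A\times X$), and use the fact that $C_{A\wr\Sigma_n}(\im\alpha)$ preserves types to get both the factorization and the centralizer equality. Two small remarks: your appeal to well-formedness is unnecessary for this lemma as stated (you only need \emph{some} partition, not one by consecutive blocks — the paper defers well-formedness to Corollary~\ref{c:nonmonotype}), and your explicit invocation of Lemma~\ref{l:wr_1} to realize the centralizer as $\Aut_{\Z^h\oplus A}(A\times[n])$ makes a step explicit that the paper leaves implicit, which is a slight improvement in readability.
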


\begin{proof}
Now let $\alpha: \Z^h \to A \wr \Sigma_n$ be a map. Let $t\in T(\alpha)$ be a type.
%
Now let $t \in T(\alpha)$. Let $B_t \subset {A \times [n]}$ be the set of elements of type $t$.
$B_t$ is a union of $A_t$-orbits. However since we have an inclusion  $A \hookrightarrow A_t$ each $A_t$-orbit is a disjoint union of
$A$-orbits . So there exists some non-empty proper subset $X \subset [n]$ such that
$$B_t = A  \times X.$$
Now the action $C_{A \wr\Sigma_n}(\im \alpha)$ on  ${A \times [n]}$ preserves types. Thus for
$Y := [n] \setminus X$ we see that
$$\alpha: \Z^h \to A \wr \Sigma_n$$ factors through the inclusion
$$A \wr\Sigma_{X,Y} \subset A \wr \Sigma_n $$
and that
$$C_{A \wr\Sigma_{X,Y}}(\im \alpha)  = C_{A \wr\Sigma_n}(\im \alpha).$$
\end{proof}
\begin{cor}\label{c:nonmonotype}
Let $$\alpha : \Z^h \to A \wr \Sigma_n$$ be  a well-formed non-monotypical map then
There exist some $0<m<n$ such that $\alpha$ factors through the inclusion:
$$A \wr \Sigma_m \times \Sigma_{n-m} \subset A \wr \Sigma_n.$$
and we have
$$C_{A \wr \Sigma_m \times \Sigma_{n-m}}(\im \alpha) = C_{ A \wr \Sigma_n}(\im \alpha).$$
\end{cor}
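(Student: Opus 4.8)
The corollary is essentially a repackaging of Lemma~\ref{nonmono}, so the plan is short. The one substantive point is to upgrade the partition-stabilizing subgroup produced by that lemma to a \emph{standard} Young subgroup $\Sigma_m\times\Sigma_{n-m}$, and this is exactly where well-formedness enters.

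First I would apply Lemma~\ref{nonmono}. Since $\alpha$ is not monotypical it produces a non-empty proper subset $X\subset[n]$, with $Y:=[n]\setminus X$, such that $\alpha$ factors through $A\wr\Sigma_{X,Y}\subset A\wr\Sigma_n$ and $C_{A\wr\Sigma_{X,Y}}(\im\alpha)=C_{A\wr\Sigma_n}(\im\alpha)$. From the construction in its proof, $X$ may be taken to be $B_t/A$ for a single type $t\in T(\alpha)$; equivalently, $X$ is the union of those orbit blocks $S_i\in P_\alpha$ of type $t$.

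Next I would use well-formedness to arrange that $X$ is an interval. Since $\alpha$ is well-formed, each $S_i$ is a block of consecutive integers, so listing the blocks in increasing order gives $S_1=\{1,\dots,a_1\}$, $S_2=\{a_1+1,\dots,a_2\}$, and so on; after the harmless reordering that groups blocks of a common type together (which preserves well-formedness), the type-$t$ blocks form an initial segment $S_1,\dots,S_r$, so $X=S_1\cup\cdots\cup S_r=\{1,\dots,m\}$ with $m:=a_r$. Consequently $\Sigma_{X,Y}$ is precisely $\Sigma_m\times\Sigma_{n-m}$, so $A\wr\Sigma_{X,Y}=A\wr(\Sigma_m\times\Sigma_{n-m})$, and $0<m<n$ because $X$ is non-empty and proper. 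Feeding this back into Lemma~\ref{nonmono} gives both claims of the corollary.

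The main obstacle is the second step --- guaranteeing that the type-$t$ orbit blocks occur consecutively so that $X$ is an interval; everything else is immediate. As a variant that sidesteps the internals of Lemma~\ref{nonmono}'s proof, one can verify the centralizer equality directly from Lemma~\ref{l:cent_1}: with $m$ chosen as above, $\alpha$ restricted to $A\times\{1,\dots,m\}$ is monotypical of type $t$ with centralizer $A_t\wr\Sigma_{N_t}$, while $\alpha$ restricted to $A\times\{m+1,\dots,n\}$ realizes the type set $T(\alpha)\setminus\{t\}$ with unchanged multiplicities, so that $C_{A\wr(\Sigma_m\times\Sigma_{n-m})}(\im\alpha)\cong(A_t\wr\Sigma_{N_t})\times\prod_{t'\neq t}A_{t'}\wr\Sigma_{N_{t'}}\cong\prod_{t'\in T(\alpha)}A_{t'}\wr\Sigma_{N_{t'}}\cong C_{A\wr\Sigma_n}(\im\alpha)$.
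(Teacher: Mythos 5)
The paper supplies no proof for this corollary: it is stated immediately after Lemma \ref{nonmono} as though it were an instant consequence. Your argument spells out the intended deduction, but your ``harmless reordering'' step conceals a genuine point that should be made explicit: that reordering is a conjugation of $\alpha$, so it changes the map. With well-formedness as literally defined (each orbit block $S_i$ is an interval, with no constraint on how blocks of different types are interleaved), the corollary is actually false. Take $h=1$, $A=e$, $n=5$, $\alpha(1)=(1\,2)(4\,5)\in\Sigma_5$. The blocks $\{1,2\}$, $\{3\}$, $\{4,5\}$ are each intervals, so $\alpha$ is well-formed, and it is non-monotypical since the middle block has a different type. The only $m$ for which $\alpha$ factors through $\Sigma_m\times\Sigma_{5-m}$ are $m=2$ and $m=3$, and in both cases $C_{\Sigma_m\times\Sigma_{5-m}}(\im\alpha)$ has order $4$, whereas by Lemma \ref{l:cent_1} the full centralizer $C_{\Sigma_5}(\im\alpha)\cong\Z/2\wr\Sigma_2$ has order $8$: the element $(1\,4)(2\,5)$, which swaps the two blocks of type $\Z/2$, lies in no standard Young subgroup.

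What your proof actually establishes is that some \emph{conjugate} of $\alpha$ satisfies the conclusion --- equivalently, the corollary holds if well-formedness is strengthened to also require blocks of equal type to be consecutive. That is exactly what the application in Proposition \ref{freeEtheory} needs, since $I_{tr}^{[\alpha]}$ and $BC(\im\alpha)$ depend on $\alpha$ only up to conjugacy, so your instinct that the reordering is harmless for the purposes at hand is sound; and your variant argument via Lemma \ref{l:cent_1}, splitting the centralizer across the two Young factors, is a clean way to verify the centralizer equality once the sorting is in place. Just be explicit that the reordering replaces $\alpha$ by a conjugate, and either restate the corollary up to conjugacy or build the type-sorting into the definition of well-formed; as written, a proof that silently swaps $\alpha$ for $g\alpha g^{-1}$ mid-argument does not prove the statement for the given $\alpha$.
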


\begin{lemma} \label{monotransfer}
Let $\alpha: \Z^h \to A \wr \Sigma_n$ be a monotypical map of type $$t:\Z^h \oplus A \to A_t.$$
Let $X ,Y$ be a partition of $[n]$.
Then $\alpha$ factors through $A \wr \Sigma_{X,Y}$ if and only if the partition $\{X,Y\}$ is coarser then $P_{\alpha}= \{S_1,...,S_{k_\alpha}\}.$
Furthermore  in this case
$$C_{A \wr \Sigma_{X,Y}}(\im \alpha )=C_{A \wr \Sigma_{X}}(\im \alpha )\times C_{A \wr \Sigma_{Y}}(\im \alpha ). $$
\end{lemma}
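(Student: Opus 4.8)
The plan is to push everything onto the ``symmetric part'' of $\alpha$. Write $\pi\colon A\wr\Sigma_n\to\Sigma_n$ for the canonical projection, set $\sigma_v:=\pi(\alpha(v))$ for $v\in\Z^h$, and let $G_\alpha:=\im(\pi\circ\alpha)=\langle\sigma_v\mid v\in\Z^h\rangle\subseteq\Sigma_n$. First I would record the elementary remark that, since the base subgroup $A^n=\ker\pi$ is contained in both $A\wr\Sigma_n$ and $A\wr\Sigma_{X,Y}$, an element $\alpha(v)=(a_v;\sigma_v)$ lies in $A\wr\Sigma_{X,Y}$ exactly when $\sigma_v\in\Sigma_{X,Y}$. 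Hence $\alpha$ factors through $A\wr\Sigma_{X,Y}$ if and only if $G_\alpha\subseteq\Sigma_{X,Y}=\Sigma_X\times\Sigma_Y$, which holds if and only if every orbit of $G_\alpha$ acting on $[n]$ is contained in $X$ or in $Y$.

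Next I would identify the orbit partition of $G_\alpha$ on $[n]$ with $P_\alpha$. Recall that $\tilde\alpha=(s\circ\alpha)\oplus d$ acts on the set $A\times[n]$, and that $d=s\circ z$ with $z$ the diagonal embedding; thus $d$ acts coordinate-wise and fixes each copy $A\times\{i\}$ setwise, while $(s\circ\alpha)(v)$ carries $A\times\{i\}$ to $A\times\{\sigma_v(i)\}$. Consequently the projection $A\times[n]\to[n]$ is equivariant for the action of $\Z^h\oplus A$ on $[n]$ that factors through $\Z^h$ via $v\mapsto\sigma_v$, that is, for the $G_\alpha$-action; and the image in $[n]$ of a $\tilde\alpha$-orbit $A\times S_o$ is exactly a $G_\alpha$-orbit, with every $G_\alpha$-orbit arising this way. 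Since distinct $\tilde\alpha$-orbits have distinct images $S_o$, this exhibits $P_\alpha=\{S_1,\dots,S_{k_\alpha}\}$ as the orbit partition of $G_\alpha$ on $[n]$. Combining with the previous step, ``$\alpha$ factors through $A\wr\Sigma_{X,Y}$'' becomes ``every block of $P_\alpha$ lies in $X$ or in $Y$'', which is precisely the statement that $\{X,Y\}$ is coarser than $P_\alpha$; this gives the biconditional.

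For the ``Furthermore'' clause I would argue as follows. Assuming $\alpha$ factors through $A\wr\Sigma_{X,Y}$, the equality $[n]=X\sqcup Y$ gives a canonical identification $A\wr\Sigma_{X,Y}=A\wr(\Sigma_X\times\Sigma_Y)\cong(A\wr\Sigma_X)\times(A\wr\Sigma_Y)$, under which $\alpha$ corresponds to $(\alpha_X,\alpha_Y)$, the pair of coordinate corestrictions. I would then invoke the general fact that a subset $S$ of a direct product $G_1\times G_2$ satisfies $C_{G_1\times G_2}(S)=C_{G_1}(p_1(S))\times C_{G_2}(p_2(S))$, which is immediate since the condition ``$(g_1,g_2)$ commutes with $(s_1,s_2)$'' decouples into commuting conditions on $g_1$ and on $g_2$ separately. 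Taking $S=\im\alpha$, so that $p_X(\im\alpha)=\im\alpha_X$ and $p_Y(\im\alpha)=\im\alpha_Y$, this yields $C_{A\wr\Sigma_{X,Y}}(\im\alpha)=C_{A\wr\Sigma_X}(\im\alpha)\times C_{A\wr\Sigma_Y}(\im\alpha)$, the right-hand side written in the standing convention that suppresses the corestrictions.

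The step I expect to be the only delicate one is the middle identification: checking that $P_\alpha$ is genuinely the orbit partition of the symmetric part $G_\alpha$, and is not further refined by the $A$-coordinate. This is exactly where one uses that $z$ is the diagonal embedding, so that $s(z(A))$ acts trivially on the $[n]$-coordinate while sweeping out each orbit in the $A$-direction (cf.\ Lemma~\ref{l:wr_1}); once this is granted the rest of the argument is purely formal. I would also note that neither the monotypicality of $\alpha$ nor the type $t$ is actually needed here; these hypotheses are retained only because the lemma belongs to a chain of results about monotypical maps and is applied in that context.
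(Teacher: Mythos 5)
Your proof is correct, and its core is the same as the paper's: reduce the factoring condition to the orbit structure on the underlying set, then split the centralizer across a product decomposition. Where you differ is in the mechanics of the ``Furthermore'' clause. The paper invokes Lemma~\ref{l:wr_1} to transport everything to $\Sigma_{n|A|}$ via the $\tilde\alpha$-action and computes $C_{\Sigma_{A\times X,A\times Y}}(\im\tilde\alpha)$ there, whereas you decompose $A\wr\Sigma_{X,Y}\cong(A\wr\Sigma_X)\times(A\wr\Sigma_Y)$ directly and apply the elementary fact that a centralizer in a direct product is the product of the centralizers of the coordinate projections. Your route is slightly more self-contained (it bypasses the $\tilde\alpha$-centralizer identification for this step); the paper's keeps the argument uniform with the $\tilde\alpha$-machinery used throughout the section. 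You also spell out a step the paper declares ``clear,'' namely that the blocks $S_o$ of $P_\alpha$ are precisely the orbits of $G_\alpha=\im(\pi\circ\alpha)$ on $[n]$; your argument --- the projection $A\times[n]\to[n]$ is equivariant for the $\Z^h\oplus A$ action, with $z(A)$ acting trivially on the $[n]$-coordinate --- is sound and worth having in full. Finally, your remark that neither monotypicality nor the type $t$ is actually used in this proof is accurate; those hypotheses serve only the corollary that follows.
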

\begin{proof}
It is clear that if ${X,Y}$ is not coarser then $P_{\alpha}= \{S_1,...,S_{k_\alpha}\}$ the action of $\alpha$ cannot factor through $A \wr \Sigma_{X,Y}$. Otherwise we get that $A \times [n]$ factors as an $\tilde{\alpha}$ set as
$$A \times [n] = A \times X \coprod A \times Y.$$
The result now follows since

$$C_{A \wr \Sigma_{X,Y}}(\im \alpha ) = C_{\Sigma_{A\times X,A\times Y}}(\im \tilde{\alpha} )  =$$
$$ =
C_{\Sigma_{A\times X}}(\im \tilde{\alpha} )\times C_{\Sigma_{A\times Y}}(\im \tilde{\alpha} ) =
C_{A \wr \Sigma_{X}}(\im \alpha )\times C_{A \wr \Sigma_{Y}}(\im \alpha ). $$
\end{proof}

\begin{cor}\label{c:monotype}
Let $$\alpha : \Z^h \to A \wr \Sigma_n$$ be  a well-formed mono-typical map of type $t$. Let
$l = \frac{|A_t|}{|A|}$ note that we have $N_t = \frac{n}{l}$. Let $0<m<n$, then
\begin{enumerate}
\item If $m$ is divisible by $l$ then  $\alpha$ factors through the inclusion:
$$A \wr \Sigma_m \times \Sigma_{n-m} \subset A \wr \Sigma_n.$$
and we have that the inclusion 
$$C_{A \wr \Sigma_m \times \Sigma_{n-m}}(\im \alpha) \subset C_{ A \wr \Sigma_n}(\im \alpha)$$ is isomorphic to the natural inclusion 
$$A_t \wr \Sigma_{\frac{m}{l}}\times \Sigma_{\frac{n-m}{l}} \subset A_t \wr \Sigma_{\frac{n}{l}}.$$ 
\item  If $m$ is not divisible by $l$ then neither $\alpha$ nor any of its conjugates  factors through the inclusion
$$A \wr \Sigma_m \times \Sigma_{n-m} \subset A \wr \Sigma_n.$$
\end{enumerate}
\end{cor}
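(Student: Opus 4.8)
The plan is to reduce both parts to the description of the orbit partition $P_\alpha=\{S_1,\dots,S_{k_\alpha}\}$, using Lemmas \ref{monotransfer} and \ref{l:cent_1}. First I would pin down the shape of $P_\alpha$. Since $\alpha$ is monotypical of type $t$, every $\tilde\alpha$-orbit has the form $A\times S_i$ (because $A$ acts transitively on each fibre $A\times\{i\}$) and has cardinality $|A_t|$; as $A$ acts freely on each fibre, this forces $|S_i|=|A_t|/|A|=l$ for every $i$, so $k_\alpha=n/l=N_t$. Because $\alpha$ is well-formed each $S_i$ is an interval of integers, hence the blocks are the successive intervals $\{1,\dots,l\},\{l+1,\dots,2l\},\dots$, and after reindexing $S_i=\{(i-1)l+1,\dots,il\}$.

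For part (2), assume $l\nmid m$. A subset $X\subseteq[n]$ is a union of blocks $S_i$ precisely when the partition $\{X,[n]\setminus X\}$ is coarser than $P_\alpha$, and such an $X$ has $|X|$ divisible by $l$; hence no partition of $[n]$ into sets of sizes $m$ and $n-m$ is coarser than $P_\alpha$. Conjugating $\alpha$ by $g\in A\wr\Sigma_n$ carries the subgroup $A\wr(\Sigma_m\times\Sigma_{n-m})$ attached to $\{1,\dots,m\}$ to the analogous subgroup attached to the image of $\{1,\dots,m\}$ under the underlying permutation of $g$, which is again a set of size $m$; and the underlying permutation of $g$ ranges over all of $\Sigma_n$ as $g$ ranges over $A\wr\Sigma_n$, so every partition of that shape is realized. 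If some conjugate of $\alpha$ factored through such a subgroup, hence \emph{a fortiori} through the corresponding $A\wr\Sigma_{X,Y}$, then Lemma \ref{monotransfer} would force $\{X,Y\}$ to be coarser than $P_\alpha$, a contradiction. This proves (2).

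For part (1), assume $l\mid m$ and set $X=\{1,\dots,m\}=S_1\cup\dots\cup S_{m/l}$ and $Y=\{m+1,\dots,n\}=S_{m/l+1}\cup\dots\cup S_{n/l}$, so $\{X,Y\}$ is coarser than $P_\alpha$ and Lemma \ref{monotransfer} shows $\alpha$ factors through $A\wr\Sigma_{X,Y}$. The underlying permutation of every element of $\im\alpha$ fixes each $\Z^h$-orbit $S_i$ setwise, hence preserves $X$ and $Y$ separately, so in fact $\alpha$ factors through $A\wr(\Sigma_X\times\Sigma_Y)=A\wr(\Sigma_m\times\Sigma_{n-m})$. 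Lemma \ref{monotransfer} also gives $C_{A\wr\Sigma_{X,Y}}(\im\alpha)=C_{A\wr\Sigma_X}(\im\alpha)\times C_{A\wr\Sigma_Y}(\im\alpha)$, and since the right-hand side already lies in $A\wr(\Sigma_X\times\Sigma_Y)$ this equals $C_{A\wr(\Sigma_m\times\Sigma_{n-m})}(\im\alpha)$. Restricting the $\tilde\alpha$-action to the invariant subset $A\times X$ exhibits $\alpha$ as a map into $A\wr\Sigma_X$ with the same type function (the stabilizer of $x\in A\times X$ only involves the orbit of $x$, which is contained in $A\times X$), so this restriction is monotypical of type $t$ with $m/l$ orbits; Lemma \ref{l:cent_1} then identifies $C_{A\wr\Sigma_X}(\im\alpha)\cong A_t\wr\Sigma_{m/l}$, and likewise $C_{A\wr\Sigma_Y}(\im\alpha)\cong A_t\wr\Sigma_{(n-m)/l}$ and $C_{A\wr\Sigma_n}(\im\alpha)\cong A_t\wr\Sigma_{n/l}$.

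The point that needs care, which I expect to be the main obstacle, is that under these identifications the inclusion $C_{A\wr\Sigma_X}(\im\alpha)\times C_{A\wr\Sigma_Y}(\im\alpha)\hookrightarrow C_{A\wr\Sigma_n}(\im\alpha)$ is the standard block inclusion $A_t\wr(\Sigma_{m/l}\times\Sigma_{(n-m)/l})\hookrightarrow A_t\wr\Sigma_{n/l}$, where one uses the canonical isomorphism $(A_t\wr\Sigma_a)\times(A_t\wr\Sigma_b)\cong A_t\wr(\Sigma_a\times\Sigma_b)$. To see this I would make the isomorphism of Lemma \ref{l:cent_1} explicit rather than abstract: $C_{A\wr\Sigma_n}(\im\alpha)$ is the automorphism group of $A\times[n]$ as a $\tilde\alpha$-set, and its isomorphism with $A_t\wr\Sigma_{N_t}$ depends on a labelling of the set of orbits $\{S_1,\dots,S_{N_t}\}$ by $\{1,\dots,N_t\}$ together with trivializations $A\times S_i\cong A_t$ of $A_t$-sets. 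Using the labelling $S_i\mapsto i$ for all three centralizers makes them compatible with the splitting $A\times[n]=A\times X\sqcup A\times Y$; an automorphism preserving this splitting is precisely one permuting $\{S_1,\dots,S_{m/l}\}$ and $\{S_{m/l+1},\dots,S_{n/l}\}$ among themselves, i.e.\ an element of the block subgroup. I would package this as a brief naturality remark about Lemma \ref{l:cent_1} relative to the decomposition $A\times[n]=A\times X\sqcup A\times Y$, so that none of its proof has to be repeated.
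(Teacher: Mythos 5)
The paper states this result as a corollary and omits a proof entirely; the intended argument is to combine the explicit description of $P_\alpha$ (every block has size $l$, and well-formedness makes the blocks the consecutive intervals of length $l$), Lemma~\ref{monotransfer} for the factorization and centralizer splitting, and Lemma~\ref{l:cent_1} for the identification with $A_t\wr\Sigma_{(\cdot)}$. Your proof is correct and is exactly this argument: part~(2) is handled by conjugation plus the size-divisibility obstruction to coarseness, and in part~(1) you correctly upgrade the factorization through $A\wr\Sigma_{X,Y}$ to one through $A\wr(\Sigma_X\times\Sigma_Y)$ (since $\im\alpha$ preserves each orbit block setwise) and then observe that the isomorphism of Lemma~\ref{l:cent_1} can be made functorial in the decomposition $A\times[n]=A\times X\sqcup A\times Y$ by fixing compatible labellings and trivializations of the orbits, which yields the block inclusion. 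This is the naturality remark the paper leaves implicit, and you have supplied it properly.
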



\section{The Morava $E$-theory of centralizers}\label{s:ETheoryisFree}
The goal of this section is to prove a freeness result for the Morava $E$-theory of centralizers of tuples of commuting elements in symmetric groups modulo a certain transfer ideal.

All of the groups that we study in this paper are good:
\begin{prop}
Centralizers of tuples of commuting prime-power order elements in symmetric groups are good.
\end{prop}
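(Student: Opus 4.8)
The plan is to combine the structure theory of Section~\ref{s:Groups} with a few standard closure properties of good groups and with the wreath-product analysis of \cite{hkr}. By Lemma~\ref{l:cent_1}, a tuple of commuting prime-power order elements of $\Sigma_m$ generates a finite abelian subgroup whose centralizer is isomorphic to a finite product $\prod_t A_t\wr\Sigma_{N_t}$ with each $A_t$ finite abelian. I would deduce the proposition from four facts: (i) the class of good groups is closed under finite products; (ii) a finite group is good whenever its Sylow $p$-subgroup is; (iii) every finite abelian $p$-group is good; and (iv) $G\wr\Z/p$ is good whenever $G$ is a good $p$-group. Indeed, granting these, (ii) lets us replace each factor $A_t\wr\Sigma_{N_t}$ by its Sylow $p$-subgroup, which (since each $A_t$ is abelian) has the form $B_t\wr P_{N_t}$ with $B_t$ a finite abelian $p$-group and $P_{N_t}$ a Sylow $p$-subgroup of $\Sigma_{N_t}$; writing $N_t$ in base $p$ and using the elementary identities $B\wr(H\times K)\cong(B\wr H)\times(B\wr K)$ (for $H,K$ acting on disjoint sets) and $B\wr(H\wr\Z/p)\cong(B\wr H)\wr\Z/p$, each $B_t\wr P_{N_t}$ is a finite product of iterated wreath products $(\cdots((B_t\wr\Z/p)\wr\Z/p)\cdots)\wr\Z/p$, and these are good by (iii), (iv), and induction on the number of $\Z/p$-factors. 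Combining with (i) finishes the reduction.

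Facts (i)--(iii) are routine. For (i) one uses that $K(n)^*$ is a graded field, so $K(n)^*(B(G_1\times G_2))\cong K(n)^*(BG_1)\otimes_{K(n)^*}K(n)^*(BG_2)$, together with the identity $\mathrm{tr}_{H_1}^{G_1}(e(V_1))\otimes\mathrm{tr}_{H_2}^{G_2}(e(V_2))=\mathrm{tr}_{H_1\times H_2}^{G_1\times G_2}\bigl(e(p_1^*V_1\oplus p_2^*V_2)\bigr)$, which writes a $K(n)^*$-spanning set of the tensor product in the required form. For (ii), the composite of restriction and transfer along a Sylow $p$-subgroup $P\leq G$ is multiplication by the prime-to-$p$ integer $[G:P]$, a unit in $K(n)^*$, so $\mathrm{tr}_P^G\colon K(n)^*(BP)\to K(n)^*(BG)$ is surjective; transitivity of transfers then carries a spanning set of transferred Euler classes for $K(n)^*(BP)$ to one for $K(n)^*(BG)$. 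For (iii), $K(n)^*(B\Z/p^j)$ is generated by the Euler class of a faithful $1$-dimensional representation, and the case of a general finite abelian $p$-group follows by K\"unneth.

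The one substantial point is (iv), which is exactly the inductive step by which \cite{hkr} proves that the Sylow subgroups $\Z/p\wr\cdots\wr\Z/p$ of the groups $\Sigma_{p^k}$ are good; I would not reprove it. Their argument rests on a transfer/cofibre-sequence analysis of $B(G\wr\Z/p)\simeq E\Z/p\times_{\Z/p}(BG)^p$, comparing it with the diagonal copy of $BG\times B\Z/p$ and with $(BG)^p$, and this analysis goes through verbatim with any good $p$-group $G$ in place of $\Z/p\wr\cdots\wr\Z/p$. Invoking it, together with (i)--(iii), completes the proof.
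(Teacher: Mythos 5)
Your proof is correct and follows essentially the same route as the paper's: apply Lemma~\ref{l:cent_1} to write the centralizer as a product of wreath products $\prod_t A_t\wr\Sigma_{N_t}$, pass to Sylow $p$-subgroups via Proposition~7.2 of \cite{hkr}, and recognize the result as a product of iterated wreath products of $\Z/p$ over abelian $p$-groups, which are good by \cite{hkr}. The paper's proof compresses all of this into three sentences, asserting the Sylow subgroup is $\prod_i A_i\wr\Z/p\wr\cdots\wr\Z/p$ and that it is good without spelling out the closure properties (i)--(iv); your write-up makes those steps explicit, including the base-$p$ decomposition of $P_{N_t}$ and the wreath-product identities needed to justify the assertion. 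One small point in your favor: the paper states the centralizer has the form $\prod_i A_i\wr\Sigma_{p^{k_i}}$, but the multiplicities $N_t$ from Lemma~\ref{l:cent_1} need not be prime powers in general; you correctly work with arbitrary $\Sigma_{N_t}$, which is both more careful and exactly what the Sylow reduction requires.
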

\begin{proof}
Lemma \ref{l:cent_1} implies that these centralizers are of the form $\Prod{i}A_i\wr\Sigma_{p^{k_i}}$ where $A_i$ is an abelian $p$-group. Now the Sylow $p$-subgroups of this wreath product is $\Prod{i}A_i\wr\Z/p\wr\ldots \wr \Z/p$, which is good. By Proposition 7.2 of \cite{hkr} this implies that the group is good.
\end{proof}

\begin{prop} \label{p.trans}
Let $\al : \Z^h \lra{} \Sigma_{p^k}$ be monotypical (the case $A=e$ in Definition \ref{d:monotypical}) with centralizer $A \wr \Sigma_{p^i}$. Let $I_{tr} \subset \E^0(BA \wr \Sigma_{p^i})$ be generated by maps of the form $A \wr (\Sigma_l \times \Sigma_m) \lra{} A \wr \Sigma_{p^i}$, where $l+m = p^i$ and $l,m>0$. Then $I_{tr} = I_{tr}^{[\al]}$.
\end{prop}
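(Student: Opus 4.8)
The plan is to unwind the definition of $I_{tr}^{[\al]}$ and then match the inclusions of centralizers appearing in it against the standard block inclusions generating $I_{tr}$, using the combinatorics of Section~\ref{s:Groups}. Recall from the discussion preceding Theorem~\ref{transferthm} and the remark following it that, with $G=\Sigma_{p^k}$ and $I_{tr}\subset\E^0(B\Sigma_{p^k})$ the ideal generated by the transfers along all inclusions $\Sigma_{l'}\times\Sigma_{m'}\subset\Sigma_{p^k}$ with $l',m'>0$ and $l'+m'=p^k$, the ideal $I_{tr}^{[\al]}\subseteq\E^0(BC(\im\al))=\E^0(BA\wr\Sigma_{p^i})$ is by definition generated by the images of the transfers along the induced inclusions of centralizers
\[
g\,C_{\Sigma_{l'}\times\Sigma_{m'}}(\im\al')\,g^{-1}\lra{}C_{\Sigma_{p^k}}(\im\al),
\]
ranging over all $\Sigma_{l'}\times\Sigma_{m'}$ and all conjugacy classes of lifts $\al'$ of $\al$ to them. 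Since the image of a transfer $\E^0(BK)\to\E^0(BL)$ depends only on the conjugacy class of $K$ in $L$, I would reduce the claim to showing that, up to conjugacy in $A\wr\Sigma_{p^i}$, the subgroups arising as sources of these induced inclusions are exactly the standard subgroups $A\wr(\Sigma_l\times\Sigma_m)\subset A\wr\Sigma_{p^i}$ with $l,m>0$, $l+m=p^i$.

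First I would normalize: replacing $\al$ by a conjugate (which is harmless, since $I_{tr}^{[\al]}$ depends only on $[\al]$) we may assume $\al$ is well-formed. Being in the case $A=e$ of Definition~\ref{d:monotypical}, the subgroup $A:=\im\al\subset\Sigma_{p^k}$ acts faithfully and transitively on each of its orbits in the $p^k$-element set, so every orbit is a free $A$-set; writing $|A|=p^{k-i}$ there are exactly $p^i$ orbits and $C_{\Sigma_{p^k}}(\im\al)\cong A\wr\Sigma_{p^i}$ by Lemma~\ref{l:cent_1}. I would then invoke Corollary~\ref{c:monotype} with $n=p^k$ and $l=|A|=p^{k-i}$ (so $N_t=p^i$): for a partition $l'+m'=p^k$ with $l',m'>0$, the map $\al$ (up to conjugacy) factors through $\Sigma_{l'}\times\Sigma_{m'}$ if and only if $|A|\mid l'$, and in that case the induced inclusion of centralizers is, up to conjugacy in $A\wr\Sigma_{p^i}$, the standard inclusion $A\wr(\Sigma_{l'/|A|}\times\Sigma_{m'/|A|})\subset A\wr\Sigma_{p^i}$; when $|A|\nmid l'$ there is no lift and hence no contribution.

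Assembling these facts gives both inclusions of ideals. Every induced centralizer inclusion contributing to $I_{tr}^{[\al]}$ has source conjugate to some $A\wr(\Sigma_l\times\Sigma_m)$ with $l=l'/|A|$, $m=m'/|A|$, $l,m>0$, $l+m=p^i$, so $I_{tr}^{[\al]}\subseteq I_{tr}$; conversely, for any $0<l<p^i$ the choice $l'=l|A|$ realizes the standard inclusion $A\wr(\Sigma_l\times\Sigma_{p^i-l})\subset A\wr\Sigma_{p^i}$ (up to conjugacy) as one of the induced inclusions, so $I_{tr}\subseteq I_{tr}^{[\al]}$, and the ideals coincide. (If some $\Sigma_{l'}\times\Sigma_{m'}$ admitted several conjugacy classes of lifts, each would still produce an inclusion conjugate to the same standard one, so no further generators appear; in fact there is only one such class, since a $\Z^h$-stable subset of size $l'$ must be a union of $\al$-orbits and is therefore unique up to isomorphism.) The main obstacle is not any single computation but the careful reconciliation of the two descriptions of the transfer ideal, and within that the upgrade from ``isomorphic induced inclusion'' to ``conjugate subgroup of $A\wr\Sigma_{p^i}$'': this I would handle via Lemmas~\ref{l:cent_1} and~\ref{monotransfer}, noting that the source $g\,C_{\Sigma_{l'}\times\Sigma_{m'}}(\im\al')\,g^{-1}$ is the group of $A$-set automorphisms of the $p^k$-element set preserving the partition of the $p^i$ orbits induced by the lift, and any two such partitions with blocks of equal sizes differ by relabelling orbits, hence are conjugate.
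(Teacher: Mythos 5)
Your proposal is correct and takes essentially the same approach as the paper, which simply says the result ``follows immediately from both parts of Corollary~\ref{c:monotype}.'' You have spelled out the unwinding of $I_{tr}^{[\al]}$ and the matching of induced centralizer inclusions against the standard block inclusions that the paper leaves implicit.
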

\begin{proof}
This follows immediately from both parts of Corollary \ref{c:monotype}.
\end{proof}

\begin{prop} \label{freeEtheory}
Let $\Z_{p}^{h} \lra{\al} \Sigma_{p^{k}}$. The ring $\E^0(BC(\im \al))/I_{tr}^{[\al]}$ is finitely generated and free as an $\E^0$-module.
\end{prop}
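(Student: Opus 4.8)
The plan is to reduce to the monotypical case and then to Strickland's freeness result via the structure theory of centralizers developed in Section~\ref{s:Groups}. First, recall from Lemma~\ref{l:cent_1} that $C(\im \al) \cong \prod_{t \in T(\al)} A_t \wr \Sigma_{N_t}$, a product of wreath products of abelian $p$-groups with symmetric groups. The transfer ideal $I_{tr}^{[\al]}$ is, by its construction in Subsection~\ref{ss:recollections}, generated by transfers along the inclusions induced by the lifts of $\al$ (more precisely of maps $\Z_p \to \Sigma_{p^k}$) to the subgroups $\Sigma_i \times \Sigma_j$. Corollaries~\ref{c:nonmonotype} and~\ref{c:monotype} tell us exactly which such inclusions the map $\al$ factors through: a non-monotypical $\al$ factors through some $A\wr(\Sigma_m \times \Sigma_{n-m})$ with the centralizer unchanged, so $1 \in I_{tr}^{[\al]}$ and the quotient ring is $0$, which is trivially finitely generated and free. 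Hence we may assume $\al$ is monotypical.

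In the monotypical case, $C(\im \al) \cong A \wr \Sigma_{p^i}$ for some abelian $p$-group $A$ and some $i \le k$, and Proposition~\ref{p.trans} identifies $I_{tr}^{[\al]}$ with the ``honest'' transfer ideal $I_{tr} \subset \E^0(BA\wr\Sigma_{p^i})$ generated by transfers along $A\wr(\Sigma_l\times\Sigma_m) \subset A\wr\Sigma_{p^i}$ with $l,m>0$, $l+m=p^i$. So it suffices to prove that $\E^0(BA\wr\Sigma_{p^i})/I_{tr}$ is finitely generated and free over $\E^0$. This is where the work of Rezk in \cite{rezkkoszul} enters: as indicated in the outline (Section~\ref{s:ETheoryisFree}), Rezk's analysis reduces the freeness of $\E^0(BA\wr\Sigma_{p^i})/I_{tr}$ to the case $A = e$, i.e.\ to Strickland's theorem (Thm~8.6 of \cite{etheorysym}) that $\E^0(B\Sigma_{p^i})/I_{tr}$ is finite and free over $\E^0$. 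I would cite this reduction and Strickland's result to conclude.

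The main obstacle is bookkeeping rather than a deep new idea: one must check that the transfer ideal appearing in $\E^0(\Loops^h B\Sigma_{p^k})$ after the decomposition $\Loops^h B\Sigma_{p^k} \cong \coprod BC(\im \al)$ really does match, conjugacy class by conjugacy class, the ideal $I_{tr}^{[\al]}$ as defined abstractly, and this is precisely the content of Corollaries~\ref{c:nonmonotype} and~\ref{c:monotype} combined with Proposition~\ref{p.trans}. The other delicate point is invoking Rezk's reduction to Strickland's theorem correctly, since it handles the wreath factor $A$; but since $A$ is a finite abelian $p$-group and the wreath product is good, the machinery applies directly. No further obstruction remains once these identifications are in place.
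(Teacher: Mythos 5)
Your proposal is correct and follows essentially the same route as the paper: reduce to the monotypical case via Corollary~\ref{c:nonmonotype}, identify $I_{tr}^{[\al]}$ with the honest transfer ideal via Proposition~\ref{p.trans}, and then invoke Rezk's machinery from \cite{rezkkoszul} together with Strickland's Theorem~8.6. The paper implements the Rezk step concretely by observing that passing to the transfer quotient is the right linearization $\mathcal{R}_{\mathbb{T}_m}$ of Rezk's functor $\mathbb{T}_m$, which is additive and hence carries the free module $E_0(BA)$ to a free module once $\mathcal{R}_{\mathbb{T}_m}(E_0)\cong(E^0(B\Sigma_m)/I_{tr})^*$ is known to be free; this is the mechanism behind the ``reduction to $A=e$'' you cite.
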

\begin{proof}
Note that Corollary \ref{c:nonmonotype} implies that this statement is trivial for non-monotypical maps $\al$ because $I_{tr}^{[\al]} = \E^0(BC(\im \al))$. When $\al$ is monotypical Proposition \ref{p.trans} implies that we need to study $\E^0(BA \wr \Sigma_{p^i})/I_{tr}$.


This puts us in a situation that is dual to Section 5 of \cite{rezkkoszul} and thus essentially follows from the discussion there. For the purposes of this proof let $E=\E$ so that $E_0 = (\E)_0$. In \cite{rezkcongruence, rezkkoszul} Rezk constructs a functor
\[
\mathbb{T}_m: \text{Mod}_{E_0} \lra{} \text{Mod}_{E_0}
\]
that takes free $E_0$-modules to free $E_0$-modules and with the property that
\[
\mathbb{T}_m(E_0) \cong E_0(B\Sigma_{m}).
\]
More generally on the free $E_0$-module $E_0(BA)$ it takes the value
\[
\mathbb{T}_m(E_0(BA)) \cong E_0(BA \wr \Sigma_{m}).
\]
Now it follows by dualizing Section 5 of \cite{rezkkoszul} that taking the quotient by the image of the transfer maps along
\[
A \wr (\Sigma_i \times \Sigma_j) \lra{} A \wr \Sigma_m
\]
is the \emph{right} linearization of the functor $\mathbb{T}_m$. Let $M$ be a free $E_0$-module and let $i_1,i_2:M \rightarrow M\oplus M$ be the canonical inclusions. The right linearization is defined to be the equalizer
\[
\xymatrix{\mathcal{R}_{\mathbb{T}_m}(M) \ar[r] & \mathbb{T}_m(M) \ar@<.5ex>[rr]^-{\mathbb{T}_m(i_1)+\mathbb{T}_m(i_2)} \ar@<-.5ex>[rr]_-{\mathbb{T}_m(i_1+i_2)} & & \mathbb{T}_m(M\oplus M).}
\]
It follows from Theorem 8.6 in \cite{etheorysym} that
\[
\mathcal{R}_{\mathbb{T}_m}(E_0) \cong (E^0(B\Sigma_m)/I_{tr})^{*}
\]
is free, where $(E^0(B\Sigma_m)/I_{tr})^{*}$ is the $E_0$-linear dual. Since $\mathcal{R}_{\mathbb{T}_m}$ is linear it takes free modules to free modules. Thus
\[
\mathcal{R}_{\mathbb{T}_m}(E_0(BA)) \cong (E^0(BA \wr \Sigma_m)/I_{tr})^{*}
\]
is free. Taking the $E_0$-linear dual gives the desired result.
\end{proof}

\section{Subgroups of $\G_{\E}\oplus \QZ^h$}\label{s:SubGrpisFree}
In this section we prove a freeness result for the ring of functions on the scheme that classifies subgroups of the \pdiv group $\G_{\E} \oplus \QZ^h$.

The main algebro-geometric objects of study are the following:

\begin{definition}
For $k \geq 0$ we define
\[
\Sub_{p^{k}}(\G_{\E}\oplus \QZ^h): \E^0\text{-algebras} \lra{} \text{Set}
\]
to be the functor mapping
\[
R \mapsto \{H  \subset R \otimes (\G_{\E}\oplus \QZ^h), |H| = p^k\}.
\]
\end{definition}

Let
\[
\pr:\G_{\E} \oplus \QZ^h \lra{} \QZ^h
\]
be the projection. This induces a surjective map
\[
\Sub_{p^{k}}(\G_{\E}\oplus \QZ^h) \lra{} \Sub_{\leq p^k}(\QZ^h)
\]
by mapping $H \mapsto \pr(H)$. Since the target of this map is discrete and the map is surjective the fibers disconnect the source. The fibers have the following form:

\begin{definition}
For $A \subset \QZ^h$ of order less than $p^k$, define
\[
\Sub_{p^{k}}^{A}(\G_{\E}\oplus \QZ^h): \E^0\text{-algebras} \lra{} \text{Set}
\]
to be the functor mapping
\[
R \mapsto \{H \subset R \otimes (\G_{\E}\oplus \QZ^h)\}
\]
such that $H$ is a subgroup of order $p^{k}$ and $\pr(H) = A$.
\end{definition}

\begin{example}
When $A = e$ there is an isomorphism $\Sub_{p^k}^{A}(\G_{\E}\oplus \QZ^h) \cong \Sub_{p^k}(\G_{\E})$.
\end{example}

\begin{remark}
Note that $\Sub_{p^k}(\G_{\E}\oplus \QZ^h)$ is a closed subscheme of $\Div_{p^k}(\G_{\E} \oplus \Lk)$.
\end{remark}

Next we show that $\Sub_{p^{k}}^{A}(\G_{\E}\oplus \QZ^h)$ is corepresented by an $\E^0$-algebra that is finitely generated and free as an $\E^0$-module. We rely on Strickland's result:

\begin{thm} \label{thm:strickland}(Theorem 10.1, \cite{subgroups}) The functor $\Sub_{p^k}(\G_{\E})$ is corepresented by an $\E^0$-algebra that is free as an $\E^0$-module of rank equal to the number of subgroups of order $p^k$ in $\QZ^n$.
\end{thm}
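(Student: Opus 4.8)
The plan is to reproduce Strickland's argument: realize $\Sub_{p^k}(\G_{\E})$ as a closed subscheme of the scheme of effective degree-$p^k$ divisors on $\G_{\E}$, deduce that it is finite over $\E^0$, and then upgrade finiteness to freeness by showing that the special and generic fibre dimensions over the regular local domain $\E^0$ agree.

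\textbf{Representability and finiteness.} Fix a coordinate $x$, identifying $\mathcal{O}_{\G_{\E}} = \E^0\powser{x}$. Over an $\E^0$-algebra $R$, an effective divisor of degree $p^k$ on $\G_{\E}$ is cut out by a unique monic $f(x) = x^{p^k} + a_{p^k-1}x^{p^k-1} + \dots + a_0 \in R\powser{x}$ with the $a_i$ topologically nilpotent; this realizes $\Div_{p^k}(\G_{\E})$ as corepresented by the completion of $\E^0[a_0,\dots,a_{p^k-1}]$ at $(a_0,\dots,a_{p^k-1})$, i.e. by $\E^0(BU(p^k))$ completed at its Euler class. Requiring $0$ to lie on the divisor and the formal addition law to carry the divisor into itself is a closed condition, so $\Sub_{p^k}(\G_{\E})$ is corepresented by a quotient ring $B$. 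A subgroup scheme of order $p^k$ is killed by $[p^k]$, hence is a subdivisor of $\G_{\E}[p^k]$; since $\G_{\E}$ has height $n$, $[p^k](x)$ is a unit in $\E^0\powser{x}$ times a distinguished polynomial $g_k(x)$ of degree $p^{nk}$, so $\mathcal{O}_{\G_{\E}[p^k]} = \E^0[x]/(g_k)$ is $\E^0$-free of rank $p^{nk}$ and every defining polynomial $f$ divides $g_k$. Hence the $a_i$ are integral over $\E^0$, so $B$ is a finitely generated $\E^0$-module.

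\textbf{Reduction to a fibre count.} The ring $\E^0 \cong W(\F)\powser{u_1,\dots,u_{n-1}}$ is a Noetherian local domain with residue field $\F$ and fraction field $K$. For a finitely generated $\E^0$-module $M$ one always has $\mathrm{rank}_K(M\otimes_{\E^0}K) \le \dim_{\F}(M/\m M)$, with equality if and only if $M$ is free. So it suffices to (a) identify the generic rank of $B$ with the number $N$ of subgroups of order $p^k$ in $\QZ^n$, and (b) prove $\dim_{\F}(B/\m B) \le N$. For (a): after base change along $\E^0 \to \bar K$ the $p$-divisible group $\G_{\E}[p^\infty]$ becomes an \'etale $p$-divisible group of height $n$ over a field of characteristic zero, hence the constant group $\QZ^n$; consequently $\Sub_{p^k}(\G_{\E})\times_{\E^0}\bar K$ is the constant reduced scheme on the finite set of subgroups of order $p^k$ of $\QZ^n$, giving $\mathrm{rank}_K(B\otimes K) = N$. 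This already exhibits $B$ as torsion-free of generic rank $N$; all the real content is (b).

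\textbf{The fibre bound via level structures, and the main obstacle.} For (b) I would follow Strickland and introduce, for each finite abelian $p$-group $A$, the scheme $\Level(A,\G_{\E})$ of Drinfeld $A$-level structures on $\G_{\E}$ (homomorphisms $A^* \to \G_{\E}$ whose associated divisor fills out the intended subgroup). The core input is that $\mathcal{O}_{\Level(A,\G_{\E})}$ is finite and free over $\E^0$ --- the Lubin--Tate analogue of Drinfeld's theorem, proved by a regular-sequence argument on the defining equations using an explicit monomial basis adapted to a filtration of $A^*$ by cyclic quotients. One then relates $B$ to the rings $\mathcal{O}_{\Level(A,\G_{\E})}^{\Aut(A)}$ for $A$ of order $p^k$ --- a subgroup of order $p^k$ is a level structure for some such $A$ taken modulo $\Aut(A)$ --- and checks that taking $\Aut(A)$-invariants of this particular free module again yields a free $\E^0$-module, of total rank $N$; comparing ranks over $K$ then forces $\dim_\F(B/\m B) = N$, and we are done. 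The hard part lives entirely in this paragraph, and within it in two places: the freeness of $\mathcal{O}_{\Level(A,\G_{\E})}$ (one must produce the explicit basis and verify that the equations cutting level structures out of divisors form a regular sequence on a free module), and --- more subtly --- the freeness of the $\Aut(A)$-invariants, since invariants of a finite group acting on a free module over a local ring are \emph{not} free in general, so one must exploit the combinatorics of the chosen basis. Everything earlier --- representability, integrality and finiteness, and the \'etale generic-fibre count --- is comparatively formal.
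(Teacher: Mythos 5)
First, a point of comparison: the paper does not prove this statement at all. It is quoted as Theorem 10.1 of Strickland's \emph{Finite subgroups of formal groups} and used as a black box (the outline explicitly says that Sections \ref{s:ETheoryisFree} and \ref{s:SubGrpisFree} ``rely on two freeness results of Strickland's''). So there is no internal proof to compare against, and your write-up has to be judged as a reconstruction of Strickland's external argument.

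Your first two paragraphs are sound and do match the shape of that argument: representability of $\Sub_{p^k}(\G_{\E})$ as a closed subscheme of $\Div_{p^k}(\G_{\E})$, finiteness via $H \subseteq \G_{\E}[p^k]$ and Weierstrass preparation, the criterion that a finitely generated module over the regular local domain $\E^0$ is free iff its generic rank equals $\dim_{\F}(M/\m M)$, and the \'etale identification of the generic fibre with the constant scheme on subgroups of order $p^k$ in $\QZ^n$. The genuine gap is in the third paragraph, which you correctly say carries all the content but which does not contain an argument. Concretely: (i) the identification of $B$ with $\prod_A \mathcal{O}_{\Level(A,\G_{\E})}^{\Aut(A)}$ is asserted rather than proved. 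What comes for free (Proposition 9.1 of \cite{etheorysym}) is only an injection $B \hookrightarrow \prod_A \mathcal{O}_{\Level(A,\G_{\E})}$, and determining its image, or even its image modulo $\m$, is essentially equivalent to the theorem; the appendix of the present paper points out that already for $p=2$, $k=2$ this injection fails to stay injective modulo $I_n$, i.e.\ the relation between subgroups and level structures degenerates precisely at the special fibre, which is the only place you still need control. (ii) For $|A| \geq p^2$ the order of $\Aut(A)$ is divisible by $p$, so the invariants are not a direct summand of the free module $\mathcal{O}_{\Level(A,\G_{\E})}$ and their freeness is a real problem; you flag this honestly but offer no mechanism to resolve it. Strickland's actual route to the bound $\dim_{\F}(B/\m B) \leq N$ is a direct analysis of $\Sub_{p^k}$ of the special fibre $\G_0$ over $\F$ (exploiting that nontrivial finite subgroup schemes of a connected one-dimensional formal group in characteristic $p$ contain the Frobenius kernel), not a passage through $\Aut(A)$-invariants of level structures. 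As written, the proposal reduces the theorem to two unproved claims, at least one of which is as hard as the theorem itself.
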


\begin{prop} \label{freesubgroups}
The functor $\Sub_{p^{k}}^{A}(\G_{\E}\oplus \QZ^h)$ is corepresentable by an $\E^0$-algebra that is finitely generated and free as an $\E^0$-module.
\end{prop}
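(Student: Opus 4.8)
The plan is to exhibit $\Sub_{p^{k}}^{A}(\G_{\E}\oplus\QZ^{h})$ as a bundle of finite free algebras over the scheme $\Sub_{p^{l}}(\G_{\E})$, where $p^{l}=p^{k}/|A|$, and then apply Strickland's Theorem~\ref{thm:strickland}. First I would construct a natural map $q:\Sub_{p^{k}}^{A}(\G_{\E}\oplus\QZ^{h})\to\Sub_{p^{l}}(\G_{\E})$. Given an $\E^{0}$-algebra $R$ and a subgroup $H\subseteq R\otimes(\G_{\E}\oplus\QZ^{h})$ of order $p^{k}$ with $\pr(H)=A$, the projection $\pr|_{H}:H\to\underline{A}_{R}$ is faithfully flat, so its kernel $H_{0}=H\cap(R\otimes\G_{\E})$ is a finite locally free subgroup scheme of $R\otimes\G_{\E}$ of constant order $p^{l}$; set $q(H)=H_{0}$. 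The first technical point to pin down is precisely that $\pr(H)=A$ forces $\pr|_{H}$ to be faithfully flat onto $\underline{A}_{R}$, so that $H_{0}$ has constant order $p^{l}$ and $q$ is a morphism of schemes; this follows from the general theory of finite subgroup schemes of formal groups.

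Next I would analyze the fibers of $q$. Since $\pr|_{H}$ has kernel $H_{0}$ and image $A$, the quotient $H/H_{0}$ maps isomorphically onto $A$; composing this isomorphism with the inclusion $H/H_{0}\hookrightarrow\big((R\otimes\G_{\E})/H_{0}\big)\oplus\QZ^{h}$ and the projection onto the first factor yields a homomorphism $\phi_{H}:A\to(R\otimes\G_{\E})/H_{0}$, and $H$ is recovered as the preimage of the graph of $\phi_{H}$ (this preimage has order $p^{l}\cdot|A|=p^{k}$ and projects onto $A$). Thus the data of $H$ is the same as the data of a pair $(H_{0},\phi_{H})$, which identifies $\Sub_{p^{k}}^{A}(\G_{\E}\oplus\QZ^{h})$ with the relative Hom-scheme $\Hom_{\Sub_{p^{l}}(\G_{\E})}(\underline{A},\,\G_{\E}/H_{0}^{\mathrm{univ}})$, where $H_{0}^{\mathrm{univ}}$ is the universal subgroup of order $p^{l}$ and $\G_{\E}/H_{0}^{\mathrm{univ}}$ is the quotient formal group over $\Gamma\Sub_{p^{l}}(\G_{\E})$.

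To finish, I would fix an isomorphism $A\cong\bigoplus_{i=1}^{r}\Zp{a_{i}}$, so that this Hom-scheme becomes $\prod_{i=1}^{r}\big(\G_{\E}/H_{0}^{\mathrm{univ}}\big)[p^{a_{i}}]$. Each torsion subgroup $\big(\G_{\E}/H_{0}^{\mathrm{univ}}\big)[p^{a_{i}}]$ is finite and free of rank $p^{na_{i}}$ over $\Gamma\Sub_{p^{l}}(\G_{\E})$, since $\G_{\E}/H_{0}^{\mathrm{univ}}$ is a one-dimensional formal group of height $n$ over the $\m$-complete ring $\Gamma\Sub_{p^{l}}(\G_{\E})$; hence $\Gamma\Sub_{p^{k}}^{A}(\G_{\E}\oplus\QZ^{h})$ is finite and free over $\Gamma\Sub_{p^{l}}(\G_{\E})$ of rank $|A|^{n}$. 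By Theorem~\ref{thm:strickland} the ring $\Gamma\Sub_{p^{l}}(\G_{\E})$ is itself finitely generated and free over $\E^{0}$, so composing the two module structures gives the statement, with total rank $|A|^{n}$ times the number of subgroups of order $p^{l}$ in $\QZ^{n}$.

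I expect the main obstacle to be keeping the two flatness/descent points honest over an arbitrary base: that $\pr(H)=A$ really does make $\pr|_{H}$ faithfully flat (so $q$ is well defined and $H_{0}$ has constant order), and that the quotient $\G_{\E}/H_{0}^{\mathrm{univ}}$ has the expected finite free $p^{a}$-torsion over the non-local finite free $\E^{0}$-algebra $\Gamma\Sub_{p^{l}}(\G_{\E})$. Both are instances of Strickland's results on quotients of formal groups by finite subgroup schemes; granted them, the rest of the argument is formal.
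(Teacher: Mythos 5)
Your proof is correct and takes essentially the same route as the paper's: both map $\Sub_{p^k}^{A}(\G_{\E}\oplus\QZ^h)$ to $\Sub_{p^k/|A|}(\G_{\E})$ by sending $H$ to the kernel of $\pr|_H$, identify the result with $\Hom(A,\bar{\G}_{\E}/U)$ over that base via exactly the pullback-of-a-graph argument you describe, and then finish with Theorem~\ref{thm:strickland}. The only cosmetic difference is the final step: you compute the rank explicitly by splitting $A$ into cyclic factors and using Weierstrass preparation on the torsion of the quotient formal group, whereas the paper simply observes that the composite is finite flat over the complete local ring $\E^0$ and therefore free.
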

\begin{proof}
Note that there is a surjection (there is a trivial section)
\[
\Sub_{p^{k}}^{A}(\G_{\E}\oplus \QZ^h) \lra{} \Sub_{p^{k}/|A|}(\G_{\E})
\]
given by sending a projection $H \lra{} A$ to its kernel.  Let $\bar{\G}_{\E}$ be the pullback of $\G_{\E}$ to $\Sub_{p^{k}/|A|}(\G_{\E})$. The formal group $\bar{\G}_{\E}$ carries the universal subgroup $U$ of order $p^k/|A|$. For this proof let $S = \Gamma \Sub_{p^{k}/|A|}(\G_{\E})$.
Over $\Sub_{p^{k}/|A|}(\G_{\E})$ there is an isomorphism
\[
\Sub_{p^{k}}^{A}(\G_{\E}\oplus \QZ^h) \cong \Hom(A, \bar{\G}_{\E}/U).
\]
We will see this by using the functor of points. Let $\Spec(R)$ be an affine scheme over $\Sub_{p^{k}/|A|}(\G_{\E})$. A $\Spec(R)$ point of the left hand side is a subgroup
\[
H \subset R \otimes_{\E^0} (\G_{\E}\oplus \QZ^h) = R \otimes_S (\bar{\G}_{\E}\oplus \QZ^h)
\]
of order $p^k$ that projects onto $A$. Let $K$ be the kernel of the projection, then $K \cong R \otimes U$. Now we have a map of short exact sequences
\[
\xymatrix{K \ar[r] \ar[d] & H \ar[r] \ar[d] & A \ar@{-->}[d] \\ K \ar[r] & R\otimes (\G_{\E}\oplus \QZ^h) \ar[r] & (R\otimes \G_{\E})/K \oplus \QZ^h.}
\]
Thus we get a $\Spec(R)$ point of the right hand side.

A $\Spec(R)$ point of the right hand side is a map
\[
A \lra{} R \otimes_S \bar{\G}_{\E}/U \cong (R\otimes_{\E^0} \G_{\E})/K.
\]
Combined with the canonical inclusion $A \subset \QZ^h$ this gives a map
\[
A \lra{} (R\otimes \G_{\E})/K \oplus \QZ^h.
\]
Now we pull back to get $H$
\[
\xymatrix{H \ar[r] \ar[d] & R\otimes \G_{\E} \oplus \QZ^h \ar[d] \\ A \ar[r] & (R\otimes \G_{\E})/K \oplus \QZ^h}
\]
a subgroup of order $p^k$ that projects onto $A$. This produces the map back.
Now the scheme $\Hom(A, \bar{\G}_{\E}/U)$ is finite flat over $\Sub_{p^{k}/|A|}(\G_{\E})$ since $\G_{\E}/U$ is $p$-divisible. Theorem \ref{thm:strickland} implies that $\Sub_{p^{k}/|A|}(\G_{\E})$ is finite flat over $\E^0$. Now the composite is finite flat and $\E^0$ is complete local, so we are done.

\end{proof}

\section{A generalized Strickland's theorem} \label{s:GenStrickland}
We show that $\E^0(\Loops^hB\Sigma_{p^k})/I_{tr}$ corepresents the scheme $\Sub_{p^k}(\G_{\E}\oplus \QZ^h)$. In the first subsection we construct the map between the two objects. The map is clearly an isomorphism after inverting $p$. In the next subsection we give a direct proof of the main theorem at height $1$. In the final subsection we prove that the map is an isomorphism by reduction to the height $1$ case.

\subsection{Character theoretic construction of the map}
The purpose of this subsection is to construct a map
\[
f_{p^k}:\Gamma \Sub_{p^{k}}(\G_{\E}\oplus \QZ^h) \lra{} \E^0(\Loops^hB\Sigma_{p^{k}})/I_{tr}.
\]
When $h=0$ this should be compared to Proposition 9.1 of \cite{etheorysym}. We will construct the map by using HKR character theory to embed both sides into class functions. There we will see that the image of the left hand side is contained in the image of the right hand side. Thus there is an injective map between the two rings.

Let
\[
\pi: \Sigma_{p^k} \lra{} U(p^k)
\]
be the permutation representation.

\begin{prop}\label{themap}
The following diagram commutes:
\[
\xymatrix{\Gamma \Div_{p^k}(\G_{\E}\oplus \Lk) \ar@{->>}[r] \ar[dd]^{\cong} & \Gamma \Sub_{p^k}(\G_{\E}\oplus \QZ^h) \ar@{^{(}->}[r] & \Gamma C_0 \otimes \Sub_{p^k}(\G_{\E}\oplus \QZ^h) \ar[d]^{\cong} \\
 & & \Gamma \Sub_{p^k}(\QZ^{n+h}) \ar[d]^{\cong} \\
\E^0(\Loops^hBU(p^k)) \ar[r]^-{\pi^{\ast}} & \E^0(\Loops^hB\Sigma_{p^k})/I_{tr} \ar@{^{(}->}[r] & Cl((\Sigma_{p^k})_{p}^{n+h},C_0)/I_{tr}.}
\]
This implies that there is an injection
\[
f_{p^k}:\Gamma \Sub_{p^k}(\G_{\E}\oplus \QZ^h) \hookrightarrow \E^0(\Loops^hB\Sigma_{p^k}).
\]
\end{prop}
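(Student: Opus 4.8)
The plan is to construct the big commutative diagram by working one column at a time and checking that the composite along the top row lands in the image of the bottom row. First I would recall that by Proposition \ref{unitarytheorem} (with $t=0$) the left vertical map is the character map $\E^0(\Loops^hBU(p^k)) \to C_{0}^{0}(\Loops^{n}_{k}\Loops^hBU(p^k))$, which after identifying both sides with global sections of divisor schemes becomes the isomorphism $\Gamma\Div_{p^k}(\G_{\E}\oplus\Lk)\cong \Gamma C_0\otimes\Div_{p^k}(\G_{\LE}\oplus\Lk) = \Gamma C_0\otimes\Div_{p^k}(\QZ^{n+h})$; here $\G_{\LE} = \G_m$ over $C_0$ up to the étale pieces, so the formal group has ``collapsed'' to a torsion group, i.e. one is left with divisors of degree $p^k$ on the constant group $\QZ^{n+h}$. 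The right-hand isomorphisms come from Theorem \ref{transferthm} applied with $G = \Sigma_{p^k}$, $H = \Sigma_i\times\Sigma_j$ and $t=0$: the character map identifies $C_0\otimes_{\E^0}\E^0(\Loops^hB\Sigma_{p^k})/I_{tr}$ with generalized class functions on commuting $(n+h)$-tuples in $\Sigma_{p^k}$ modulo the transfer ideal. The permutation representation $\pi$ induces the map $\pi^*$ on the bottom left, and by the naturality statement Proposition \ref{naturality} the square formed by the two character maps and $\pi^*$ on both levels commutes — this is the heart of why the big diagram commutes.

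Next I would identify what the bottom row actually does. A generalized class function on $(\Sigma_{p^k})_p^{n+h}$ is a function on conjugacy classes of maps $\Z_p^{n+h}\to\Sigma_{p^k}$, equivalently on isomorphism classes of $\Z_p^{n+h}$-sets of size $p^k$. Modulo $I_{tr}$, by Proposition \ref{p.trans} and Corollary \ref{c:nonmonotype}, only the monotypical maps survive (the non-monotypical classes are killed by transfers from proper Young subgroups), and a monotypical $\Z_p^{n+h}$-set of size $p^k$ is a disjoint union of copies of a single transitive $\Z_p^{n+h}$-set, i.e. is determined by a surjection $\Z_p^{n+h}\twoheadrightarrow A_t$ together with a multiplicity $N_t$ with $N_t|A_t| = p^k$. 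Dually such data is exactly a subgroup of $\Z_p^{n+h}/(\text{something})$ — more precisely it matches a point of $\Sub_{p^k}(\QZ^{n+h})$ via Pontryagin duality, which is the content of the canonical isomorphism $\Spec(RA)\cong\Hom(A^*,\G_m)$ alluded to in the introduction. So the bottom-right vertical isomorphism $\Gamma\Sub_{p^k}(\QZ^{n+h})\cong Cl((\Sigma_{p^k})_p^{n+h},C_0)/I_{tr}$ is really a statement about characters of $\Z_p^{n+h}$-sets, and I would prove it by decomposing a class function by which monotypical type it is supported on.

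With the diagram in place, the injectivity of $f_{p^k}$ follows formally: the composite along the left column and bottom row, $\Gamma\Sub_{p^k}(\G_{\E}\oplus\QZ^h)\hookrightarrow \Gamma C_0\otimes\Sub_{p^k}(\QZ^{n+h})\cong Cl(\ldots)/I_{tr}$, is injective because $C_0$ is faithfully flat (hence faithful) over $p^{-1}\E^0$ and $\Gamma\Sub_{p^k}(\G_{\E}\oplus\QZ^h)$ is free, hence torsion-free, over $\E^0$ by Proposition \ref{freesubgroups}, so it injects into its rationalization which injects into the $C_0$-version. Since this injective composite factors through $\E^0(\Loops^hB\Sigma_{p^k})/I_{tr}$ via $f_{p^k}$ followed by the bottom inclusion, $f_{p^k}$ itself must be injective. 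The main obstacle, I expect, is verifying commutativity of the two right-hand squares precisely — keeping track of the several identifications (divisor schemes versus cohomology, the collapse $C_0\otimes\G_{\LE}$ to a torsion group, Pontryagin duality on the class-function side, and the compatibility of the transfer ideal $I_{tr}$ with $I_{tr}^{[\al]}$ under the character map via Theorem \ref{transferthm} and Proposition \ref{p.trans}) without sign or indexing errors. The topological input, naturality of the character map along $\pi$, is exactly Proposition \ref{naturality}, so that piece is already done.
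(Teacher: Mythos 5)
Your overall strategy matches the paper's: embed both sides into generalized class functions via the transchromatic character maps, reduce commutativity to a height-$0$ statement, and deduce the injection from freeness plus faithful flatness. The injectivity argument for $f_{p^k}$ at the end is essentially right. However there are two problems, one of which is a genuine error.

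First, your description of what survives in $Cl((\Sigma_{p^k})_p^{n+h},C_0)/I_{tr}$ is incorrect. You claim only the non-monotypical classes are killed and that a surviving class is a monotypical $\Z_p^{n+h}$-set, i.e.\ a surjection $\Z_p^{n+h}\twoheadrightarrow A_t$ together with a multiplicity $N_t$ with $N_t|A_t|=p^k$, and that this matches a point of $\Sub_{p^k}(\QZ^{n+h})$. But a subgroup of $\QZ^{n+h}$ of order $p^k$ is dual to a \emph{transitive} $\Z_p^{n+h}$-set of size $p^k$, i.e.\ the case $N_t=1$. The monotypical classes with $N_t>1$ also die rationally: Corollary~\ref{c:monotype} provides a lift to a Young subgroup inducing a proper, finite-index inclusion of centralizers $A_t\wr(\Sigma_a\times\Sigma_b)\subset A_t\wr\Sigma_{N_t}$, and over $C_0^0$ (a $\Q$-algebra) the transfer of $1$ along such an inclusion is the index, hence a unit, so $I_{tr}^{[\al]}=C_0^0$. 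Proposition~\ref{p.trans} identifies $I_{tr}^{[\al]}$ but does not tell you that it is zero at height $0$; in fact it is all of $C_0^0$ unless $\al$ is transitive. Your proposed correspondence therefore overcounts, and since the whole argument hinges on the two middle terms having equal rank, this is not a cosmetic slip.

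Second, you have not supplied the key computation, and you acknowledge as much by calling it ``the main obstacle'' without resolving it. Proposition~\ref{naturality} gives commutativity of only one face of the cube (the one comparing character maps for $BU(p^k)$ and $B\Sigma_{p^k}$); Proposition~\ref{unitarytheorem} gives another; the back face is formal algebraic geometry. What remains is the \emph{bottom} face at height $0$: starting from a divisor in $C_0^0\otimes\G_{p^{-1}\E}\oplus\Delta_k$, one must check that passing through $C_0^0(\Loops_k^{n+h}BU(p^k))$ and restricting to $\Sigma_{p^k}$ gives the same element of $Cl/I_{tr}$ as passing through $\Gamma\Sub_{p^k}(\QZ^{n+h})$ via Pontryagin duality. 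The paper verifies this on $\Spec$-points: for a transitive $\al$ with image $A$ (a transitive abelian subgroup, hence $|A|=p^k$), the composite $A\to\Sigma_{p^k}\to U(p^k)$ is the regular representation, whose eigenvalue data is exactly the full character group $A^*\subset\Lk$, and the formal (power-series) directions contribute zero for degree reasons. This identification of the two routes is the content of the proposition; naturality of the character map is necessary but nowhere near sufficient, and you have not replaced the missing computation with anything.
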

\begin{proof}
We have already seen that the left vertical map is an isomorphism. The right vertical map is given by sending $\al:\Z_{p}^{n+h} \lra{} \Sigma_{p^k}$ to the image of the Pontryagin dual
\[
\QZ^{n+h} \hookleftarrow (\im \al)^*.
\]
The right horizontal maps are injections by Propositions \ref{freeEtheory} and \ref{freesubgroups}. Now the right vertical isomorphism implies that both of the $\E^0$-modules in the middle have the same rank. The left horizontal surjection is due to the fact that we are taking the ring of functions of a closed inclusion.

We can use the results of Section \ref{unitary} to reduce commutativity of this diagram to a height $0$ (and thus combinatorial) problem. Let $\Lk = (\Z/p^k)^h$ and $\Delta_k = (\Z/p^k)^{n+h}$. Consider the following diagram:
\[
\begin{tikzcd}[row sep=large, column sep=0ex]
& \Gamma \Div_{p^k}(\G_{\E} \oplus \Lk) \arrow{dl}\arrow{rr}\arrow{dd} & & \Gamma \Sub_{p^k}(\G_{\E}\oplus \QZ^h) \arrow[dashed]{dl} \arrow{dd} \\
\E^0(\Loops_{k}^{h} BU(p^k)) \arrow[crossing over]{rr}\arrow{dd} & & \E^0(\Loops^hB\Sigma_{p^k})/I_{tr} \\
& C_{0}^0 \otimes \Gamma \Div_{p^k}(\G_{p^{-1}\E}\oplus \Delta_k) \arrow{dl}\arrow{rr} & & \Gamma \Sub_{p^k}(\QZ^{n+h}) \arrow{dl} \\
C_{0}^{0}(\Loops_{k}^{n+h}BU(p^k)) \arrow{rr} & & Cl((\Sigma_{p^k})_{p}^{n+h},C_{0}^0)/I_{tr} \arrow[crossing over, leftarrow]{uu}\\
\end{tikzcd}
\]
The top square of this cube is at height $n$ and the bottom square at height $0$. The left side commutes by Proposition \ref{unitarytheorem}. The front side commutes by Proposition \ref{naturality}. The back side is completely algebro-geometric. The bottom is at height $0$ and thus combinatorial, its commutativity follows from the definition of the permutation representation.

We show that $\Spec(-)$ of the bottom square commutes. Let $\Z_{p}^{n+h} \lra{\al} \Sigma_{p^k}$ be a map classifying a transitive $\Z_{p}^{n+h}$-set. Let $A = \im \al$. Going around the square through $\Sub_{p^k}(\QZ^{n+h})$ sends this to the image of the Pontryagin dual
\[
A^* \lra{} \Lk \subset \QZ^{n+h}
\]
viewed as a divisor in $C_{0}^0 \otimes \G_{p^{-1}\E} \oplus \Lk$ that is just $0$ in $C_{0}^0 \otimes \G_{p^{-1}\E}$.

We show this is the same as going around the bottom square the other way. The map $\al$ factors into the following composite
\[
\Z_{p}^{n+h} \lra{} (\Z/p^k)^{n+h} \lra{g} A \lra{} \Sigma_{p^k}.
\]
Since $A$ is a transitive abelian subgroup of $\Sigma_{p^k}$ it has order $p^k$ and the composite $\rho:A \rightarrow \Sigma_{p^k} \overset{\pi}{\rightarrow} U(p^k)$ is the regular representation. Since $\rho$ is the regular representation it is the sum of all characters of $A$. This defines $|A|$ maps $A \lra{} S^1$ are precisely the elements of $A^*$. Thus we get the divisor
\[
A^* \subseteq \Lk = ((\Z/p^k)^{n+h})^*.
\]
The last thing that needs to be checked is that all of the power series generators (after choosing a coordinate) of
\[
C_{0}^{0}(\Loops_{k}^{n+h}BU(p^k))
\]
map to $0 \in Cl((\Sigma_{p^k})_{p}^{n+h},C_{0}^0)/I_{tr}$. But this is clear as they are induced by elements of higher cohomological degree and the codomain is concentrated in degree $0$. Thus we get exactly the same divisor of $C_{0}^0 \otimes \G_{p^{-1}\E} \oplus \Lk$ by going around the square both ways.
\end{proof}

\subsection{The height $1$ case}
Given an abelian group $A$ and an integer $n \geq 0$, there is a map
$$N_A:A \wr \Sigma_n \to A$$
induced by the addition map $A^n \to A$.
\begin{lemma} \label{lemma:rep}
The map $N_A$ induces an isomorphism
$$R(A) \lra{\cong} R(A \wr \Sigma_n)/ I_{tr}$$
\end{lemma}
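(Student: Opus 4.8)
The plan is to recognize the target $R(A\wr\Sigma_n)/I_{tr}$ as the degree-$n$ piece of the indecomposable quotient of a graded ring, and to identify $N_A^*$ with the inclusion of generators. Write $A^* = \mathrm{Irr}(A)$, so $R(A) = \Z[A^*]$ is free on the $1$-dimensional characters $\chi$. First I would record the elementary identity $A\wr(\Sigma_l\times\Sigma_m) = (A\wr\Sigma_l)\times(A\wr\Sigma_m)$ inside $A\wr\Sigma_n$, so that $I_{tr}$ is the image of $\Ind$ from these ``parabolic'' subgroups. Assembling these over all $n$, I would pass to the graded ring $B = \bigoplus_{n\ge 0}R(A\wr\Sigma_n)$ equipped with the induction product $U\cdot V = \Ind^{A\wr\Sigma_{|U|+|V|}}_{(A\wr\Sigma_{|U|})\times(A\wr\Sigma_{|V|})}(U\boxtimes V)$. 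By construction the degree-$n$ part of the decomposables $(B_{\ge 1})^2$ is exactly $I_{tr}$, so $R(A\wr\Sigma_n)/I_{tr} = Q(B)_n$, the degree-$n$ part of the indecomposables $Q(B) = B_{\ge 1}/(B_{\ge 1})^2$. Moreover $N_A^*([\chi])$ is the $1$-dimensional character $\chi\circ N_A$ of $A\wr\Sigma_n$ --- namely $\chi^{\boxtimes n}$ extended over the $\Sigma_n$-factor by the trivial action --- which is precisely the irreducible attached to the multipartition supported at $\chi$ with value the one-row partition $(n)$.

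Second, I would prove that $N_A^*$ is surjective by reducing to the case $A = e$. The irreducibles of $A\wr\Sigma_n$ are indexed by functions $\underline\lambda\colon A^*\to\{\text{partitions}\}$ with $\sum_\chi|\lambda_\chi| = n$, where $V_{\underline\lambda}$ is induced from $\prod_{\chi}A\wr\Sigma_{|\lambda_\chi|}$. If $\underline\lambda$ is supported on at least two characters then $\prod_{\chi}A\wr\Sigma_{|\lambda_\chi|}$ sits inside a proper $A\wr(\Sigma_l\times\Sigma_m)$ with $l,m>0$, so $V_{\underline\lambda}\in I_{tr}$ by transitivity of induction. If $\underline\lambda$ is supported at a single $\chi$ with value $\lambda\vdash n$, then $V_{\underline\lambda} = (\chi\circ N_A)\otimes q^*(S^\lambda)$, where $q\colon A\wr\Sigma_n\twoheadrightarrow\Sigma_n$ and $S^\lambda$ is the Specht module. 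Since inflation commutes with induction and the projection formula makes multiplication by the unit $\chi\circ N_A$ preserve $I_{tr}$, the composite $R(\Sigma_n)\xrightarrow{q^*}R(A\wr\Sigma_n)\xrightarrow{\,\cdot(\chi\circ N_A)\,}R(A\wr\Sigma_n)\to R(A\wr\Sigma_n)/I_{tr}$ annihilates the transfer ideal of $R(\Sigma_n)$, hence factors through $R(\Sigma_n)$ modulo its transfer ideal. That quotient is $\Z$ on the trivial representation --- this is the $A = e$ case of the lemma, equivalently the polynomiality of the ring of symmetric functions on the complete homogeneous symmetric functions. Therefore $V_{\underline\lambda}\equiv c_{\chi,\lambda}\,N_A^*([\chi])\pmod{I_{tr}}$ for some $c_{\chi,\lambda}\in\Z$, and since every irreducible is of one of these two types, $R(A\wr\Sigma_n)/I_{tr}$ is spanned over $\Z$ by $\{N_A^*([\chi])\}_{\chi\in A^*}$.

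Finally I would conclude by a rank count. Both $R(A)$ and $R(A\wr\Sigma_n)/I_{tr} = Q(B)_n$ are free $\Z$-modules; $R(A)$ has rank $|A|$, and $Q(B)_n$ has rank $|A^*| = |A|$ because $B$ is a polynomial ring. A surjection of finitely generated free $\Z$-modules of equal rank is an isomorphism, so $N_A^*$ is an isomorphism of abelian groups; being a ring homomorphism it is a ring isomorphism. The one genuinely non-formal ingredient --- everything else is bookkeeping --- is the input that $\bigoplus_n R(A\wr\Sigma_n)$ is a polynomial ring, i.e. that $R(A\wr\Sigma_n)/I_{tr}$ has rank exactly $|A|$; this is the classification of positive self-adjoint Hopf algebras, under which $B\cong\Lambda^{\otimes A^*}$, and to avoid that machinery one could instead exhibit $|A|$ linearly independent $\Z$-valued functionals on $R(A\wr\Sigma_n)$ annihilating $I_{tr}$, built from Frobenius reciprocity. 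I would remark that a single appeal to the full structure theorem, once it is phrased so that the degree-$n$ polynomial generator in the $\chi$-slot is $[\chi\circ N_A]$, delivers both surjectivity and the rank simultaneously.
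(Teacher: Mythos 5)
Your proof is correct, and it amounts to a careful unpacking of what the paper disposes of with a citation: the paper's entire argument is ``this is Section 7 of Zelevinsky'' together with the observation that the pullback of $\chi$ along $N_A$ is $\chi^{\otimes n}\wr 1$, which is precisely your identification of $N_A^*([\chi])$ with the irreducible indexed by the one-row multipartition at $\chi$. The underlying mathematics is therefore the same (the Hopf algebra $B=\bigoplus_n R(A\wr\Sigma_n)$ under the induction product, the classification of irreducibles of $A\wr\Sigma_n$ by $A^*$-indexed multipartitions, and the PSH polynomiality $B\cong\Lambda^{\otimes A^*}$), but you make the logical structure explicit where the paper does not. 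One thing your write-up does particularly cleanly is separate the two uses of the structure theory: surjectivity via the classification of irreducibles and the known fact that $R(\Sigma_n)/I_{tr}\cong\Z$ on the trivial representation, versus the rank count via polynomiality of $B$; and you correctly note at the end that invoking Zelevinsky's structure theorem once, phrased so the degree-$n$ generator in the $\chi$-slot is $[\chi\circ N_A]$, collapses both steps. The only caveat is that the ``non-formal ingredient'' you flag is indeed load-bearing: you need $Q(B)_n$ to have rank exactly $|A^*|$, and the surjection-of-equal-rank-free-$\Z$-modules argument fails if $B$ is merely generated in degree one rather than freely so. You are right to acknowledge this, and the citation to the PSH classification (or to Zelevinsky Section 7, as the paper does) closes it.
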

\begin{proof}
This isomorphism appears in Section 7 of \cite{zelevinsky}. Note that given a character $\rho$ of $A$.
The pulled back representation on $A \wr \Sigma_n$ is $\rho^{\otimes n} \wr 1$.
\end{proof}

The conjugacy classes of $A \wr \Sigma_n$ have canonical representatives. It will be useful to know what they are.

Every conjugacy class of $\Sigma_n$ is determined by a partition of $n$. We represent this as a non-increasing sequence of integers  $\lambda_1 \geq \lambda_2 \geq \ldots \geq \lambda_r \geq 1$
such that $\sum \lambda_i = n$. Any such conjugacy class contains a unique element that can be written in cycle notation as
$$\sigma = (1,\ldots,\lambda_1)(\lambda_1+1,\ldots,\lambda_1+\lambda_2)\cdots(n-\lambda_r+1,\ldots,n).$$
We call such a $\sigma$ a \emph{canonical representative}.

%
Now let $\sigma$ be a canonical representative of $\Sigma_n$ with $r$ cycles and  let $(a_1,\ldots a_r)\in A^r$ be a tuple of elements in the abelian group $A$.
We denote by $[a_1,\ldots,a_r]\wr \sigma \in A\wr \Sigma_n$
the element
$$(a_1,\overbrace{0,\ldots,0}^{\lambda_1-1},a_2,\overbrace{0,\ldots,0}^{\lambda_2-1},\ldots,a_r,\overbrace{0,\ldots,0}^{\lambda_r-1})\rtimes \sigma. $$
We call the element of the form $[a_1,\ldots,a_r]\wr \sigma \in A\wr \Sigma_n$ a canonical representative in $A\wr \Sigma_n$. It is well-known and easy to check that the set of canonical representatives in $A\wr \Sigma_n$ is a complete set of representatives for the conjugacy classes of $A\wr \Sigma_n$.

\begin{lemma}\label{l:height_0}
Let $A$ be  an abelian group and  $n > 0$ be an integer.
The image of all the transfer maps
$$Cl(A\wr (\Sigma_k \times \Sigma_{n-k}),\mathbb{C}) \to Cl(A\wr \Sigma_n,\mathbb{C})   $$
together generate the ideal of  functions that vanish on all conjugacy  classes in $A\wr \Sigma_n$ which project to the $n$-cycle in $\Sigma_n$ (i.e the conjugacy classes of the form $[a]\wr (1,\ldots,n)$ for some $a \in A$).
\end{lemma}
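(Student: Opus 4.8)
The plan is to compute both ideals explicitly inside $Cl(A\wr\Sigma_n,\C)$, using that this ring is a finite product of copies of $\C$ indexed by the conjugacy classes of $A\wr\Sigma_n$, so that \emph{every} ideal is the set of functions supported on a fixed set of conjugacy classes. By the discussion of canonical representatives above, a conjugacy class of $A\wr\Sigma_n$ is recorded by the cycle type $\lambda=(\lambda_1\ge\cdots\ge\lambda_r)$ of its image in $\Sigma_n$ together with the $A$-weights $(a_1,\dots,a_r)$ of the cycles; the classes of the form $[a]\wr(1,\dots,n)$ are exactly those with $r=1$, i.e.\ $\lambda=(n)$. So the ideal in the statement is the set of class functions supported on the classes with $r\ge 2$, and it suffices to show the images of the transfers generate this ideal.

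First I would recall the standard description of the image of a transfer. On class functions the transfer along $H\subseteq G$ is induction, $(\Ind_H^G f)(g)=\tfrac1{|H|}\sum_{x\in G,\,x^{-1}gx\in H}f(x^{-1}gx)$, and the projection formula $\Ind_H^G(\Res_H^G\phi\cdot\psi)=\phi\cdot\Ind_H^G\psi$ shows the image of $\Ind_H^G$ is an ideal of $Cl(G,\C)$. Testing $\Ind_H^G$ against the indicator function of a single $H$-conjugacy class — where all summands are nonnegative, so no cancellation can occur — shows this ideal is precisely the class functions on $G$ supported on the conjugacy classes that meet $H$. Hence the ideal generated by the transfers from $A\wr(\Sigma_k\times\Sigma_{n-k})$, $0<k<n$, is the set of class functions supported on the classes that meet $A\wr(\Sigma_k\times\Sigma_{n-k})$ for at least one such $k$.

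It then remains to prove the combinatorial claim: a conjugacy class of $A\wr\Sigma_n$ meets some $A\wr(\Sigma_k\times\Sigma_{n-k})$ with $0<k<n$ if and only if it has $r\ge 2$. If $\lambda=(n)$ then the image in $\Sigma_n$ of any representative is an $n$-cycle, which acts transitively on $\{1,\dots,n\}$ and hence lies in no conjugate of $\Sigma_k\times\Sigma_{n-k}$ for $0<k<n$; thus such a class meets no $A\wr(\Sigma_k\times\Sigma_{n-k})$. Conversely, if $r\ge 2$ then $0<\lambda_1<n$, and inspecting the canonical representative $[a_1,\dots,a_r]\wr\sigma$ one sees the blocks $\{1,\dots,\lambda_1\}$ and $\{\lambda_1+1,\dots,n\}$ are each preserved by both its $A^n$-part and its $\Sigma_n$-part, so it already lies in $A\wr(\Sigma_{\lambda_1}\times\Sigma_{n-\lambda_1})$. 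Putting the pieces together, the ideal generated by the transfers is the set of class functions supported on the classes with $r\ge 2$, i.e.\ the functions vanishing on all classes $[a]\wr(1,\dots,n)$, as claimed.

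The one step requiring a little care is identifying the image of a transfer with the class functions supported on classes meeting the subgroup — that is, checking that inducing up loses no support — which is exactly where one uses that $Cl(G,\C)$ is a product of fields and that induction of an indicator function of a conjugacy class involves no sign cancellation; the rest is bookkeeping with the canonical representatives.
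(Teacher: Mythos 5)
Your proof is correct. The paper's own proof of this lemma is the single word ``Clear,'' and your argument --- identifying ideals of the semisimple ring $Cl(A\wr\Sigma_n,\C)$ with support sets of conjugacy classes, using the non-cancellation in the induction formula to show the image of each transfer is exactly the functions supported on classes meeting the subgroup, and then the combinatorial observation that a class whose cycle type $\lambda$ has $r$ parts meets some $A\wr(\Sigma_k\times\Sigma_{n-k})$ with $0<k<n$ precisely when $r\ge 2$ --- is a careful write-up of exactly what the authors regard as obvious.
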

\begin{proof}
Clear.
\end{proof}

\begin{lemma}\label{l:sgrp_hom}
There exists a canonical  isomorphism of schemes.
$$c_{m,A}:  \Hom(A^*,\G_m) \lra{\cong}  \Sub_{m}^{A^*}(\G_m \oplus \Q/\Z^h). $$
\end{lemma}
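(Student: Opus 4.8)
The plan is to produce the isomorphism $c_{m,A}$ by unwinding both sides into purely combinatorial/linear-algebraic data over an arbitrary $\E^0$-algebra $R$ — though since everything here is at height $0$ (we are looking at $\G_m$, not $\G_{\E}$), ``$\E^0$-algebra'' should be read as ``ring'' in the appropriate base, and the two functors are representable, so it suffices to check the natural transformation is a bijection on $R$-points functorially in $R$. A point of $\Hom(A^*,\G_m)(R)$ is a homomorphism $\phi\colon A^* \to \G_m(R) = R^\times$; since $A^*$ is a finite abelian group this is the same as a point of $\Spec(R[A^*])(R) = \Spec(RA)(R)$ via the canonical isomorphism $\Spec(RA)\cong\Hom(A^*,\G_m)$ recalled in the introduction (here $RA$ is the group ring, which is the representation ring of $A$ since $A$ is abelian). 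On the other side, a point of $\Sub_m^{A^*}(\G_m \oplus \Q/\Z^h)(R)$ is a subgroup $H \subseteq R\otimes(\G_m\oplus\Q/\Z^h)$ of order $m$ whose projection to $\Q/\Z^h$ is the constant étale subgroup $A^*$ (recall $A \subseteq \Q/\Z^h$, so $A^* $ here really denotes the designated image subgroup of $\Q/\Z^h$; I am following the paper's convention of conflating $A$ with $A^*$ via Pontryagin duality and the fixed embedding $\Q/\Z \subset S^1 \subset \C^*$).

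First I would set up the map $c_{m,A}$ itself. Given $\phi\colon A^* \to R^\times = \G_m(R)$, form the graph subgroup
\[
H_\phi := \{(\phi(a), a) \mid a \in A^*\} \subseteq \G_m(R) \oplus A^* \subseteq R\otimes(\G_m \oplus \Q/\Z^h),
\]
which has order $|A^*| = |A| = m$ and projects isomorphically onto $A^* \subseteq \Q/\Z^h$. One checks this is a closed subgroup scheme (it is the image of a map from the constant group scheme $\underline{A^*}$, which is finite flat, landing in $\G_m \oplus \Q/\Z^h$, so its scheme-theoretic image is the desired subgroup). This assignment is visibly natural in $R$, hence defines a map of schemes $c_{m,A}\colon \Hom(A^*,\G_m) \to \Sub_m^{A^*}(\G_m\oplus\Q/\Z^h)$; here I use that the source has order exactly $m$ and that the projection is forced to be an isomorphism (not merely a surjection onto $A^*$) because source and target both have order $m$.

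Next I would construct the inverse. Given $H$ as above with $\mathrm{pr}(H) = A^*$, the projection $\mathrm{pr}\colon H \to A^*$ is a surjection of finite flat group schemes of the same order $m$, hence an isomorphism; composing its inverse with the projection $H \to \G_m \oplus \Q/\Z^h \to \G_m$ gives a homomorphism $\psi_H\colon A^* \to \G_m(R)$. (To be careful at the scheme level: $A^*$ is étale, so $H$ is étale, and ``isomorphism of finite flat group schemes of the same order'' is checked after faithfully flat base change where $A^*$ becomes a disjoint union of copies of $\Spec R$; alternatively work with the Hopf algebras, where the claim is that a surjection $RA^* \twoheadrightarrow \Gamma H$ of free rank-$m$ modules is an isomorphism.) One then checks $\psi_{H_\phi} = \phi$ and $H_{\psi_H} = H$, which are both immediate from the constructions. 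The canonicity in the statement refers to the fact that no choices were made beyond the fixed embedding $\Q/\Z \subset \C^\times$ already set up in Subsection \ref{ss:recollections}.

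The main obstacle — really the only subtle point — is making the scheme-theoretic, rather than merely $R$-points, statements precise: showing that $H_\phi$ is genuinely a closed subgroup scheme flat of degree $m$, and that $\mathrm{pr}\colon H \to \underline{A^*}$ is an isomorphism of group schemes and not just an epimorphism on geometric points. Both reduce, after faithfully flat descent, to the étale (indeed split étale) case, where they become the elementary statement that a surjection between finite free modules of the same rank is an isomorphism; and representability of $\Sub_m^{A^*}(\G_m\oplus\Q/\Z^h)$ by a finite free algebra — which legitimizes checking a natural isomorphism on functors of points — is the $h$-variable, height-$0$ shadow of Proposition \ref{freesubgroups}. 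Everything else is bookkeeping with the fixed identifications $\hat{A} \cong A^*$ and $\Spec(RA)\cong\Hom(A^*,\G_m)$.
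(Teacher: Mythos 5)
Your proposal breaks on a single but fatal point: you assume $|A^*| = m$. You write that the graph subgroup $H_\phi = \{(\phi(a),a) : a \in A^*\}$ ``has order $|A^*| = |A| = m$'', and later that $\pr\colon H \to A^*$ ``is a surjection of finite flat group schemes of the same order $m$, hence an isomorphism''. But the lemma is stated (and used, in Proposition~\ref{p.diagram}) with $l := m/|A|$ an arbitrary positive integer; monotypical $\al\colon\Z^h\to\Sigma_m$ with $A=\im\al$ typically has $|A| < m$. When $l>1$, a subgroup $H \in \Sub_m^{A^*}(\G_m\oplus\Q/\Z^h)$ has a nontrivial kernel $K := H \cap \G_m$ of order $l$ under the projection to $A^*$, so $\pr\colon H\to A^*$ is not an isomorphism and $H$ is not a graph. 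Your construction produces a map $\Hom(A^*,\G_m)\to\Sub_{|A^*|}^{A^*}(\G_m\oplus\Q/\Z^h)$, i.e.\ the $l=1$ case only, and your claimed inverse is undefined for $l>1$.

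The missing idea, and what the paper actually does, is to exploit two special features of $\G_m$. First, $\G_m$ has a \emph{unique} subgroup $\mu_l = \G_m[l]$ of order $l$, so $\Sub_l(\G_m)\cong\Spec\Z$; hence the kernel $K$ above is canonically $\mu_l$, with no choice to be made. Second, the $l$-th power map gives a canonical isomorphism $\G_m/\mu_l \cong \G_m$. Combining these with the structure map from the proof of Proposition~\ref{freesubgroups} — which exhibits $\Sub_m^{A^*}(\G_m\oplus\Q/\Z^h)$ as $\Hom(A^*, \G_m/U)$ over $\Sub_l(\G_m)$ carrying the universal subgroup $U$ — collapses the whole thing to $\Hom(A^*,\G_m)$. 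Equivalently, the correct inverse to $c_{m,A}$ sends $H$ to the composite $A^* \cong H/K \to \G_m/K = \G_m/\mu_l \cong \G_m$, and $c_{m,A}$ itself sends $\phi$ to the pullback of $\phi\oplus(A^*\hookrightarrow\Q/\Z^h)$ along the $l$-th power map $\G_m\oplus\Q/\Z^h\to\G_m\oplus\Q/\Z^h$, which has order $l\cdot|A^*| = m$. Your argument would be essentially correct for $l=1$; the general case needs the unique-subgroup and self-quotient properties of $\G_m$ that do not hold for an arbitrary formal group.
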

\begin{proof}
Denote $l= \frac{m}{|A|}$ and recall the proof of Proposition \ref{freesubgroups}. We get a natural isomorphism
$$\Hom(A^*,\G_m/\G_m[l]) \lra{\cong} \Sub_{m}^{A^*}(\G_m \oplus \Q/\Z^h).  $$
as schemes over $ \Sub_{l}(\G_m)$.
Now $\G_m$ has a unique subgroup $\G_m[l]$ of order $l$ so
$$\Sub_{l}(\G_m) \cong \Spec \Z $$
and we have a canonical isomorphism  $\G_m/\G_m[l]\cong \G_m$.
\end{proof}

\begin{lemma}
There is a canonical isomorphism of schemes
$$\chi_A: \Spec R(A) \lra{\cong} \Hom(A^*,\G_m)$$
\end{lemma}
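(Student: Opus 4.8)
The plan is to factor $\chi_A$ through the identification of $R(A)$ with the group ring $\Z[A^*]$, and then invoke the standard adjunction between group rings and the functor of points of $\G_m$. First I would recall that since $A$ is a finite abelian group every irreducible complex representation of $A$ is one-dimensional, hence is a character $A\to\C^*$; under the fixed inclusions $\QZ\subset\Q/\Z\subset S^1\subset\C^*$ these characters are canonically the elements of the Pontryagin dual $A^*$. General character theory gives that the classes $[\chi]$ for $\chi\in A^*$ form a $\Z$-basis of $R(A)$, and the isomorphism $\chi_1\otimes\chi_2\cong\chi_1\chi_2$ shows that the product in $R(A)$ is the group multiplication of $A^*$. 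So linear extension of $\chi\mapsto[\chi]$ is a canonical ring isomorphism $\Z[A^*]\lra{\cong}R(A)$.

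Next I would observe that each $[\chi]\in R(A)$ is a unit, with inverse $[\chi^{-1}]$, because $\chi\otimes\chi^{-1}$ is the trivial representation. Hence for any commutative ring $R$ and any ring homomorphism $\phi\colon R(A)\to R$ the rule $\chi\mapsto\phi([\chi])$ is a group homomorphism $A^*\to R^\times=\G_m(R)$, and conversely any group homomorphism $A^*\to R^\times$ extends uniquely and linearly to a ring homomorphism $\Z[A^*]=R(A)\to R$. These two assignments are mutually inverse and natural in $R$, so they give an isomorphism of functors on commutative rings
\[
\Hom_{\mathrm{ring}}(R(A),-)\;\cong\;\Hom_{\mathrm{grp}}\bigl(A^*,\G_m(-)\bigr),
\]
which is exactly the asserted isomorphism of schemes $\chi_A\colon\Spec R(A)\lra{\cong}\Hom(A^*,\G_m)$. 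Canonicity is automatic, since the only identification used is the fixed $\hat A\cong A^*$.

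I do not expect a real obstacle here; the lemma is essentially the universal property of $\Z[A^*]$. The one point deserving care is the bookkeeping of identifications — that a character of $A$, an element of $\hat A$, and an element of $A^*$ all mean the same thing via the chosen exponential inclusion — so that this $\chi_A$ is compatible with the identifications in Lemmas \ref{lemma:rep} and \ref{l:sgrp_hom}, which are composed with it (through $R(A\wr\Sigma_n)/I_{tr}\cong R(A)$ and $\Hom(A^*,\G_m)\cong\Sub_m^{A^*}(\G_m\oplus\Q/\Z^h)$) in the proof of the height $1$ case.
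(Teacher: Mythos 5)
Your proof is correct and is essentially the paper's argument: the paper also proceeds by the functor of points, noting that a ring map $R(A)\to T$ is the same thing as a group map $A^*\to T^\times=\G_m(T)$. You simply make explicit the intermediate identification $R(A)\cong\Z[A^*]$ (valid since $A$ is abelian, so all irreducibles are characters) and the universal property of the group ring, which the paper leaves implicit.
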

\begin{proof}
Consider the scheme $\Spec R(A)$ as a functor of points, given a test ring $T$ a map
$R(A) \to T$ is the same as a map $A^* \to T^{\times} = \G_m(T)$.
\end{proof}

Let $l = m/|A|$, then there is an injective map
\[
g_{m,A}:\Gamma \Sub_{m}^{A^*}(\G_m \oplus \Q/\Z^h) \hookrightarrow R(A\wr\Sigma_{l})/I_{tr}
\]
defined by embedding both sides into class functions just as in the proof of Proposition \ref{themap}.

\begin{prop} \label{p.diagram}
Let $\al:\Z^h \lra{} \Sigma_m$ be monotypical and let $A= \im \al$ and $l = m/|A|$. There is a commutative diagram
\[
\xymatrix{\Spec R(A)  \ar[r]_-{\cong}^-{\chi_A} & \Hom(A^*,\G_m)  \ar[d]^-{\cong}_{c_{m,A}}  \\
\Spec(R(A\wr\Sigma_{l})/I_{tr})\ar[u]_-{\cong}^-{N_{A}} \ar[r]^-{g_{m,A}^{*}} & \Sub_{m}^{A^*}(\G_m \oplus \Q/\Z^h) .
}
\]
\end{prop}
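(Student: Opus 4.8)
The plan is to pass to global sections and check an equality of ring homomorphisms on generators. All four schemes in the square are affine, so commutativity of the diagram is equivalent to the identity
\[
g_{m,A} \;=\; N_A^{\ast}\circ\chi_A^{\ast}\circ c_{m,A}^{\ast}\colon\; \Gamma \Sub_m^{A^*}(\G_m\oplus\Q/\Z^h)\lra{}R(A\wr\Sigma_l)/I_{tr}.
\]
The three maps on the right are isomorphisms by the three preceding lemmas, and $g_{m,A}$ is injective since it is by construction the restriction of an embedding into class functions. By Lemma~\ref{l:height_0} the image of $I_{tr}$ inside $Cl(A\wr\Sigma_l,\C)$ is exactly the ideal of functions vanishing on the monotypical classes $[b]\wr(1,\dots,l)$, $b\in A$; since $R(A\wr\Sigma_l)/I_{tr}$ is free over $\Z$ by Lemma~\ref{lemma:rep}, it therefore embeds into $Cl(A,\C)$, the ring of functions on the $|A|$-element set of these classes. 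It thus suffices to show the two composites agree after postcomposition with this embedding; and since both composites are ring homomorphisms and $c_{m,A}^{\ast}$ identifies the source with the group ring $\Z[A^*]=\Gamma\Hom(A^*,\G_m)$, it is enough to check agreement on the elements corresponding to the characters $a^*\in A^*$.

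First I would evaluate the left leg. Under $\chi_A^{\ast}$ a character $a^*$ corresponds to the one-dimensional representation $\rho_{a^*}$ of $A$, and by Lemma~\ref{lemma:rep} its image under $N_A^{\ast}$ is $\rho_{a^*}^{\otimes l}\wr 1$. A trace computation then gives that the character of $\rho_{a^*}^{\otimes l}\wr 1$ on the canonical representative $[b]\wr(1,\dots,l)$ equals $a^*(b)$ (the trace of a cyclic permutation of $V^{\otimes l}$ twisted by $\rho_{a^*}(b)$ on the first factor is $\mathrm{tr}\,\rho_{a^*}(b)=a^*(b)$). So in $Cl(A,\C)$ the left leg sends $a^*$ to the function $b\mapsto a^*(b)$.

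Next I would evaluate the right leg. Here one identifies the $\C$-points of $\Sub_m^{A^*}(\G_m\oplus\Q/\Z^h)$ with $A$: the kernel of the projection onto $A^*\subseteq\Q/\Z^h$ must be the unique order-$l$ subgroup of $\G_m$, and the remaining data is a homomorphism $A^*\to(\G_m/\G_m[l])(\C)\cong\C^\times$, i.e.\ an element of $\Hom(A^*,\C^\times)\cong A^{**}=A$. This is exactly the identification underlying $c_{m,A}$ in Lemma~\ref{l:sgrp_hom}, and it matches the monotypical conjugacy classes of $A\wr\Sigma_l$ in the same way as the corresponding identification in the proof of Proposition~\ref{themap} (via the permutation representation, the regular representation of $A$, and Pontryagin duality). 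Under it, the point $b\in A$ corresponds to the homomorphism $\phi_b\colon A^*\to\C^\times$ with $\phi_b(a^*)=a^*(b)$, so $g_{m,A}$ applied to the element of $\Gamma\Sub_m^{A^*}(\G_m\oplus\Q/\Z^h)$ corresponding to $a^*$ is the class function taking the value $\phi_b(a^*)=a^*(b)$ on $[b]\wr(1,\dots,l)$. This agrees with the left leg, and since the elements corresponding to the $a^*$ generate the source as a ring, the square commutes.

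The step I expect to be the main obstacle is the bookkeeping just described: verifying that the element $b\in A$ labelling a monotypical conjugacy class is attached consistently on the two sides---on the representation-ring side through $N_A$ and the characters of $\rho_{a^*}^{\otimes l}\wr 1$, and on the subgroup side through $c_{m,A}$, the chosen isomorphism $\G_m/\G_m[l]\cong\G_m$, and Pontryagin duality---with all the duality conventions and the $l$-th power twist lined up. Once this dictionary is fixed, exactly as it is implicitly in the proof of Proposition~\ref{themap}, the remaining verifications are routine trace and functor-of-points computations.
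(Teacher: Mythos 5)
Your overall strategy matches the paper's: both reduce the commutativity of the square to a pointwise verification (you check generators of the global sections evaluated at $\C$-points; the paper checks the maps directly on $\C$-points, using that all four schemes are reduced, finite and flat over $\Spec\Z$). Your computation of the left leg is correct and is essentially what the paper does: $N_A([b]\wr(1,\dots,l)) = b$, so the character of $N_A^*\rho_{a^*}$ on $[b]\wr(1,\dots,l)$ is $a^*(b)$.

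However, there is a genuine gap in your treatment of the right leg. You write that ``the point $b\in A$ corresponds to the homomorphism $\phi_b$ \dots\ so $g_{m,A}$ applied to the element corresponding to $a^*$ is the class function taking the value $a^*(b)$ on $[b]\wr(1,\dots,l)$.'' This ``so'' presupposes that $g_{m,A}^*([b]\wr(1,\dots,l))$, as a $\C$-point of $\Sub_m^{A^*}(\G_m\oplus\Q/\Z^h)$, is exactly the subgroup that $c_{m,A}$ assigns to $\phi_b$ --- but that is precisely the assertion being proved, not an input. Invoking ``the same way as in the proof of Proposition~\ref{themap}'' does not close the gap, because the content of the present proposition is to reconcile two different identifications: the one coming from $g_{m,A}$ (via the permutation representation and the embedding into class functions) and the one coming from $c_{m,A}\circ\chi_A\circ N_A$. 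You correctly flag this reconciliation as ``the main obstacle,'' but then treat it as routine bookkeeping and do not carry it out.

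The paper fills exactly this gap with an explicit calculation. It regards $[a]\wr(1,\dots,l)$ as a map $\alpha_a:\Z\to A\wr\Sigma_l$, forms the associated $\tilde\alpha_a:\Z\oplus A\to\Sigma_{l|A|}$, observes that $([a]\wr(1,\dots,l))^l = z(a)$ where $z$ is the diagonal, and concludes that $B=\im\tilde\alpha_a$ is the pushout of $\Z\xleftarrow{\times l}\Z\xrightarrow{1\mapsto a}A$ (equivalently, a pushout of $\Z\oplus\Z^h$). Pontryagin dualizing this pushout produces a pullback square exhibiting $B^*$ as a subgroup of $\Q/\Z\oplus\Q/\Z^h$, and this is $g_{m,A}^*([a]\wr(1,\dots,l))$. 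It then checks that applying $c_{m,A}$ to $\chi_A(N_A([a]\wr(1,\dots,l))) = a^*$ produces the identical pullback square, hence the identical subgroup $B^*$. Without this pushout/pullback comparison --- or some substitute for it --- your argument has a circularity at its central step; the rest of the proposal (the freeness and injectivity observations, the reduction to generators, the left-leg trace computation) is fine.
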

\begin{proof}
Since all schemes involved are reduced finite and flat over $\Spec \Z$ it is enough to check the commutativity on $\C$-points. By Lemma \ref{l:height_0} $\Spec(R(A\wr\Sigma_{l})/I_{tr})(\C)$ can be naturally identified with the set of conjugacy classes of $[a]\wr (1,\ldots,l)$ for all $a\in A$.
Let $[a]\wr(1,\ldots,l)$ be such an element. First we would like to describe $g_{m,A}^{*} ([a]\wr (1,\ldots,l))$.
The conjugacy class $[a]\wr(1,\ldots,l)$ corresponds to a map
$$\alpha_a : \Z \to A \wr \Sigma_l$$ such that
$\alpha(a)(1) = [a]\wr(1,\ldots,l)$.
By Section \ref{s:Groups} $\alpha_a$ corresponds to  map
$$\tilde{\alpha}_a:\Z \oplus A \to \Sigma_{l|A|}.$$
To understand the kernel of this map note that $A$ embeds into $A \wr\Sigma_{l}$ via the diagonal map
$z:A \to A \wr\Sigma_{l}$ on the other hand $$([a]\wr (1,\ldots,l))^l = z(a).$$
Thus we get that $B:= \im \tilde{\alpha_a}$ is the result of the pushout:
$$
\xymatrix{
 \Z \ar[d]^{\times l}\ar[r]^{1 \mapsto a}& A\ar@{^{(}->}[d] \\
\Z \ar[r]& B.
}$$
Since $A$ comes with the surjection $\alpha :\Z^h \to A$ we can write $B$ also as the pushout of
$$
\xymatrix{
 \Z\oplus  \Z^h\ar[d]^-{\times l \oplus Id}\ar[r]^-{a \oplus \alpha}& A\ar@{^{(}->}[d] \\
\Z \oplus \Z^h  \ar@{->>}[r]& B.
}$$
Dualizing the diagram above we get
$$
\xymatrix{
B^* \ar@{->>}[d] \ar@{^{(}->}[r] &  \Q/\Z\oplus  \Q/\Z^h \ar[d]^-{\times l \oplus Id}\\
A^* \ar@{^{(}->}[r]_-{a^* \oplus \alpha^*} & \Q/\Z\oplus  \Q/\Z^h.
}$$
Considering $B^*$ as a subgroup in  $\Q/\Z\oplus  \Q/\Z^h$ we get
$$B^* = g_{m,A}^*([a]\wr(1,\ldots,l)) \in \Sub_{m}^{A^*}(\G_m \oplus \Q/\Z^h)(\C).$$

Now we would like to compare $B^*$ with $c_{m,A} \circ \chi_A \circ N_{A}([a]\wr(1,\ldots,l))$.
It is clear that $$N_{A}([a]\wr(1,\ldots,l)) = a \in A = \Spec(R(A))(\C).$$
The map $\chi_A(a)$ is $$a^*:A^* \to \Q/\Z$$ mapping
$a^*(\phi) = \phi(a)$.
To apply $c_{m,A}$ we need to take a pullback as in the proof of Proposition \ref{freesubgroups}.
We get
$$
\xymatrix{
B^* \ar@{->>}[d] \ar@{^{(}->}[r] &  \Q/\Z\oplus  \Q/\Z^h \ar[d]^-{\times l \oplus Id}\\
A^* \ar@{^{(}->}[r]_-{a^* \oplus \alpha^*} & \Q/\Z\oplus  \Q/\Z^h
}$$
just as before.

\end{proof}

\subsection{The map is an isomorphism}

Recall that $C_{0}^0$ is a faithfully flat $p^{-1}\E^0$-algebra. Thus after inverting $p$ the map
\[
f_{p^k}: \Gamma \Sub_{p^k}(\G_{\E} \oplus \QZ^h) \lra{} \E^0(\Loops^h B\Sigma_{p^k})/I_{tr}
\]
constructed in Proposition \ref{themap} is an isomorphism. Now it suffices to show that the map is an isomorphism after base change to any ring in which $p$ is not nilpotent and not invertible. We use the ring $\bar C_{1}^{0}$ constructed in Subsection \ref{sub:K}.

\begin{prop}
The isomorphism of Lemma \ref{lemma:rep} induces an isomorphism
\[
K_{p}^{0}(BA\wr\Sigma_{p^k})/I_{tr} \lra{\cong} K_{p}^{0}(BA).
\]
\end{prop}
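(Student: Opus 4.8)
The plan is to deduce this from Lemma~\ref{lemma:rep} by means of the Atiyah--Segal completion theorem. Recall that for a finite group $G$ one has $K_{p}^{1}(BG)=0$ and $K_{p}^{0}(BG)\cong R(G)^{\wedge}_{I_{G}+(p)}$, the completion of the representation ring at the sum of the augmentation ideal $I_{G}$ and $(p)$. Because $R(G)$ is Noetherian this completion is exact on finitely generated $R(G)$-modules, it is functorial in $G$ (a homomorphism $\phi\colon H\to G$ inducing the completion of the dimension-preserving, hence continuous, map $\phi^{*}\colon R(G)\to R(H)$), and it commutes with passage to quotient rings. Moreover, under the Atiyah--Segal isomorphism the topological transfer $K^{0}(BH)\to K^{0}(BG)$ corresponds to induction $\Ind_{H}^{G}\colon R(H)\to R(G)$; hence the transfer ideal $I_{tr}\subset K_{p}^{0}(BA\wr\Sigma_{p^{k}})$ is precisely the extension to $K_{p}^{0}(BA\wr\Sigma_{p^{k}})$ of the algebraic transfer ideal $I_{tr}\subset R(A\wr\Sigma_{p^{k}})$ occurring in Lemma~\ref{lemma:rep}, namely the ideal generated by the images of $\Ind$ from the subgroups $A\wr(\Sigma_{l}\times\Sigma_{m})$ with $l+m=p^{k}$ and $l,m>0$.

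Next I would verify that the isomorphism $\bar{N}_{A}\colon R(A)\to R(A\wr\Sigma_{p^{k}})/I_{tr}$ of Lemma~\ref{lemma:rep}, induced by the addition map $N_{A}$, respects the relevant adic topologies. On the one hand $N_{A}^{*}$ is dimension-preserving: a character $\phi$ of $A$ pulls back along $N_{A}$ to the one-dimensional representation $(a_{1},\dots,a_{p^{k}};\sigma)\mapsto\phi(a_{1})\cdots\phi(a_{p^{k}})$. On the other hand every element of the algebraic $I_{tr}\subset R(A\wr\Sigma_{p^{k}})$ has dimension divisible by $p$, since $\Ind$ from $\Sigma_{l}\times\Sigma_{m}\subset\Sigma_{p^{k}}$ multiplies dimension by $\binom{p^{k}}{l}$, which is divisible by $p$ whenever $0<l<p^{k}$. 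Together these show that $\bar{N}_{A}$ identifies $I_{A}+(p)\subset R(A)$ with the image of $I_{A\wr\Sigma_{p^{k}}}+(p)$ in the quotient, and hence matches the two $(\text{augmentation}+p)$-adic topologies.

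Finally I would complete. Since $R(A\wr\Sigma_{p^{k}})$ is Noetherian, applying the $J$-adic completion with $J:=I_{A\wr\Sigma_{p^{k}}}+(p)$ to the short exact sequence
\[
0\to I_{tr}\to R(A\wr\Sigma_{p^{k}})\to R(A)\to 0
\]
yields a short exact sequence in which $R(A\wr\Sigma_{p^{k}})^{\wedge}_{J}$ is identified with $K_{p}^{0}(BA\wr\Sigma_{p^{k}})$, the image of $I_{tr}$ with $I_{tr}\cdot K_{p}^{0}(BA\wr\Sigma_{p^{k}})$ — which is the $K$-theory transfer ideal by the first paragraph — and, using the second paragraph, the completion of $R(A)$ with $K_{p}^{0}(BA)$. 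Dividing out by $I_{tr}$ gives the desired isomorphism $K_{p}^{0}(BA\wr\Sigma_{p^{k}})/I_{tr}\cong K_{p}^{0}(BA)$, and by construction it is the map induced by $N_{A}$, that is, the $K$-theory realization of the isomorphism of Lemma~\ref{lemma:rep}.

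The only genuinely non-formal ingredient is the Atiyah--Segal identification of the topological transfer with induction of representations, which is what licenses replacing the $K$-theoretic transfer ideal by the algebraic one and so reduces the whole statement to Lemma~\ref{lemma:rep}; the one further point needing a moment's care is the divisibility $p\mid\binom{p^{k}}{l}$ used to match the adic topologies, and everything else is the standard commutative algebra of adic completions over the Noetherian ring $R(A\wr\Sigma_{p^{k}})$.
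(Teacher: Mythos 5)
Your proof is correct but takes a genuinely different route from the paper's. The paper's argument invokes Strickland's result (Proposition 9.7 of \cite{chernapprox}) that $K_p^0(BG)\cong\Z_p\otimes R(G)/J$, where $J$ is the kernel of restriction to a Sylow $p$-subgroup, sets up a commutative ladder comparing the algebraic and topological transfer ideals, and concludes by observing that the induced surjection $R_p(A\wr\Sigma_{p^k})/I_{tr}\twoheadrightarrow K_p^0(BA\wr\Sigma_{p^k})/J_{tr}$ is a map of free $\Z_p$-modules of the same rank and hence an isomorphism. You instead work directly from the Atiyah--Segal completion theorem in its $p$-adic form and track the $(I+(p))$-adic topologies on both sides; the divisibility $p\mid\binom{p^k}{l}$ for $0<l<p^k$ is exactly the input needed to identify the filtration induced on $R(A\wr\Sigma_{p^k})/I_{tr}$ with the intrinsic filtration $I_A+(p)$ on $R(A)$. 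What your approach buys is independence from the freeness-of-the-quotient and rank-comparison considerations that the paper leans on; what it costs is one additional (correct but unspoken) closure point, namely that $I_{tr}\cdot K_p^0(BG)$, being the completion of a finitely generated ideal over the Noetherian ring $R(G)$, is closed in the $J$-adic topology and therefore coincides with the ideal generated by the completed induction maps, i.e.\ the topological transfer ideal. Both proofs are sound; the paper's is a quicker diagram chase given Strickland's Chern approximations, while yours is a more self-contained reduction to Atiyah--Segal and standard commutative algebra of adic completions.
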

\begin{proof}
Let $P$ be a Sylow $p$-subgroup of $G$ and $J$ the kernel of the restriction map $RG \lra{} RP$. By Proposition 9.7 in \cite{chernapprox}, $K_{p}^{0}(BG) \cong \Z_p\otimes R(G)/J$. Let us write $R_pG$ for $\Z_p \otimes RG$. Assume that $A$ is a $p$-group, then we have the following diagram
\[
\xymatrix{I_{tr} \ar@{->>}[r] \ar[d] & J_{tr} \ar[d] \\ R_pA\wr\Sigma_{p^k} \ar@{->>}[r] \ar[d] & K_{p}^{0}A\wr \Sigma_{p^k} \ar[d] \\ R_pA\wr \Sigma_{p^k}/I_{tr} \ar@{->>}[r] \ar[d]^-{\cong} & K_{p}^{0}A\wr \Sigma_{p^k}/J_{tr} \\ R_pA \ar[r]^-{\cong} & K_{p}^{0}A, \ar[u]
}
\]
where $I_{tr}$ is the image of the transfer in representation theory and $J_{tr}$ is the image of the transfer in $p$-adic $K$-theory. The top two horizontal arrows are surjective by Strickland's result. This implies the third horizontal arrow is surjective. But this is a map between free modules of the same rank so it is an isomorphism. This implies that the bottom right arrow is an isomorphism.
\end{proof}

Thus we have a commutative diagram
\[
\xymatrix{K_{p}^{0}(\Loops^h BU(p^k))  \ar@{->>}[d] & \Gamma \Div_{p^k}(\hat{\G}_m \oplus \Lk) \ar@{->>}[d] \ar[l]_-{\cong}\\
K_{p}^{0}(\Loops^hB\Sigma_{p^k})/I_{tr}  & \Gamma \Sub_{p^k}(\hat{\G}_m \oplus \QZ^h), \ar[l]^-{g_{p^k}}_-{\cong}
}
\]
where $\hat \G_m$ is the formal multiplicative group. The bottom arrow may be defined purely in terms of the other arrows. We choose an element in $\Gamma \Sub_{p^k}(\hat{\G}_m \oplus \QZ^h)$, lift it to the global sections of divisors, pass to $K_{p}^{0}(\Loops_{k}^{h} BU(p^k))$ and map down to to $K_{p}^{0}(\Loops^hB\Sigma_{p^k})/I_{tr}$. We slightly abuse notation and call the bottom isomorphism $g_{p^k}$. By Proposition \ref{p.diagram} it is a product of isomorphisms of the form $g_{p^k,A}$.

\begin{thm} \label{t:main}
The map
\[
f_{p^k}: \Gamma \Sub_{p^k}(\G_{\E} \oplus \QZ^h) \lra{} \E^0(\Loops^h B\Sigma_{p^k})/I_{tr}
\]
is an isomorphism.
\end{thm}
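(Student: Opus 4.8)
The plan is to reduce, via the freeness results and the rational isomorphism already in hand, to a single base change to height $1$, where the statement becomes the representation-theoretic content of Proposition \ref{p.diagram}. First I would record that $f_{p^k}$ is an injection of finitely generated free $\E^0$-modules of equal rank: the domain is free by Proposition \ref{freesubgroups} (a finite product over the subgroups $A \subseteq \QZ^h$), the codomain is free by Proposition \ref{freeEtheory} together with the $\Loops^h$-decomposition recorded in the remark following Theorem \ref{transferthm}, and the right vertical isomorphism of Proposition \ref{themap} identifies the common target of the two embeddings into class functions, so the ranks agree. Since $C_0^0$ is faithfully flat over $p^{-1}\E^0$ and Theorem \ref{charactermap} makes the character map an isomorphism after tensoring its source up to $C_0^0$, the map $f_{p^k}$ becomes an isomorphism after inverting $p$. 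Choosing bases, $\det f_{p^k} = p^N u$ with $N \geq 0$ and $u \in (\E^0)^\times$; as $p$ is not a unit in the complete local ring $\E^0$, it suffices to prove $N = 0$, and this can be detected after base change to the ring $\bar C_1^0$ of Subsection \ref{sub:K}, which is nonzero and in which $p$ is not invertible.

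Next I would identify $\bar C_1^0 \otimes_{\E^0} f_{p^k}$. Applying the height $1$ transchromatic character map to $\Loops^h B\Sigma_{p^k}$ replaces $\Loops^h$ by $\Loops^{n+h-1}$, and extending scalars along $C_1^0 \to \bar C_1^0$ by Proposition \ref{p.basechange} yields
\[
\bar C_1^0 \otimes_{\E^0} \E^0(\Loops^h B\Sigma_{p^k})/I_{tr} \cong \bar C_1^0 \otimes_{\Z_p} K_p^0(\Loops^{n+h-1}B\Sigma_{p^k})/I_{tr},
\]
while $\bar C_1^0 \otimes (\G_{\E} \oplus \QZ^h) \cong \hat{\G}_m \oplus \QZ^{n+h-1}$ yields
\[
\bar C_1^0 \otimes_{\E^0} \Gamma\Sub_{p^k}(\G_{\E} \oplus \QZ^h) \cong \bar C_1^0 \otimes_{\Z_p} \Gamma\Sub_{p^k}(\hat{\G}_m \oplus \QZ^{n+h-1}).
\]
By naturality of the character map in the group and along the permutation representation (Propositions \ref{naturality} and \ref{unitarytheorem}), carried out as in the commutative cube of Proposition \ref{themap} but at height $1$, these identifications carry $\bar C_1^0 \otimes_{\E^0} f_{p^k}$ to $\bar C_1^0 \otimes_{\Z_p} g_{p^k}$, where $g_{p^k}$ is the isomorphism in the commutative diagram displayed just before the theorem.

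It then remains to conclude. The map $g_{p^k}$ is a product of maps $g_{p^k,A}$, one for each conjugacy class of monotypical $\al \colon \Z_p^{n+h-1} \to \Sigma_{p^k}$: non-monotypical $\al$ contribute the zero ring on both sides (Corollary \ref{c:nonmonotype} and the observation in the proof of Proposition \ref{freeEtheory}, together with their $K$-theory analogue), and for a monotypical $\al$ with centralizer $A \wr \Sigma_{p^i}$ Proposition \ref{p.trans} and the isomorphism $K_p^0(BA\wr\Sigma_{p^i})/I_{tr} \cong K_p^0(BA)$ proved just above identify the factors. Proposition \ref{p.diagram} exhibits each $g_{p^k,A}$ as an isomorphism, placing it in a square with the canonical isomorphisms $\chi_A \colon \Spec R(A) \lra{\cong} \Hom(A^*,\G_m)$, the isomorphism $c_{p^k,A}$ of Lemma \ref{l:sgrp_hom}, and the isomorphism $N_A$ of Lemma \ref{lemma:rep}. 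Hence $\bar C_1^0 \otimes_{\E^0} f_{p^k}$ is an isomorphism, so $N = 0$, $\det f_{p^k}$ is a unit, and an injection of free $\E^0$-modules of the same finite rank with unit determinant is an isomorphism. I expect the main obstacle to be the identification in the second step — checking that the height $1$ analogue of the cube of Proposition \ref{themap} commutes, so that $f_{p^k}$ genuinely base-changes to $g_{p^k}$ — since everything else is either an already-established result or short determinant bookkeeping.
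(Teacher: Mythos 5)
Your proposal is correct and follows essentially the same route as the paper: the determinant reduction to a single good base change, the base change to $\bar C_1^0$ identified with $\bar C_1^0 \otimes g_{p^k}$ via the height-$1$ character-map cube, and the appeal to Proposition \ref{p.diagram} at height $1$. The only cosmetic difference is that you make the determinant argument explicit inside the proof, whereas the paper states it in the section preamble and introduction and then, in the proof itself, just asserts that $\bar C_1^0 \otimes f_{p^k}$ being an isomorphism implies $f_{p^k}$ is.
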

\begin{proof}
Recall from Subsection \ref{sub:K} that $\bar C_{1}^0 = \pi_0(L_{K(1)}(C_1 \wedge K_p))$ and that there is an isomorphism of \pdiv groups
\[
\bar C_{1}^0 \otimes \G_{C_1} \cong \bar C_{1}^{0} \otimes \G_m \cong \G_{\bar C_1}.
\]
Applying the character maps of Theorem \ref{charactermap} and Theorem \ref{unitarytheorem} to the square
\[
\xymatrix{\E^0(\Loops_{k}^hBU(p^k)) \ar[d] & \Gamma \Div_{p^k}(\G_{\E} \oplus \Lk) \ar[d] \ar[l]_-{\cong} \\ \E^0(\Loops^hB\Sigma_{p^k})/I_{tr} & \Gamma \Sub_{p^k}(\G_{\E} \oplus \QZ^h) \ar[l]
}
\]
and then base changing to $\bar C_{1}^0$ gives the square
\[
\xymatrix{\bar{C}_{1}^{0}\otimes_{\LE^0} \LE^0(\Loops_{k}^{n+h-1} BU(p^k))  \ar[d] & \bar C_{1}^0 \otimes \Gamma \Div_{p^k}(\G_{\LE} \oplus (\Z/p^k)^{n+h-1}) \ar[l]_-{\cong} \ar@{->>}[d] \\
\bar{C}_{1}^{0}(\Loops^{n+h-1}B\Sigma_{p^k})/I_{tr} & \Gamma \Sub_{p^k}(\G_{\bar C_1} \oplus \QZ^{n+h-1}). \ar[l]_-{\bar{C}_{1}^{0} \otimes f_{p^k}}
}
\]
Proposition \ref{p.basechange} implies that the bottom arrow of this square is the base change $\bar{C}_{1}^{0} \otimes f_{p^k}$ because all of the groups that appear in $\Loops^{n+h-1}B\Sigma_{p^k}$ are good. The map from the top arrow to the bottom arrow factors through
\[
\xymatrix{\bar{C}_{1}^{0}(\Loops_{k}^{n+h-1} BU(p^k))  \ar[d] & \Gamma \Div_{p^k}(\G_{\bar C_1} \oplus (\Z/p^k)^{n+h-1}) \ar[l]_-{\cong} \ar@{->>}[d] \\
\bar{C}_{1}^{0}(\Loops^{n+h-1}B\Sigma_{p^k})/I_{tr} & \Gamma \Sub_{p^k}(\G_{\bar C_1} \oplus \QZ^{n+h-1}). \ar[l]_-{\bar{C}_{1}^{0} \otimes f_{p^k}}
}
\]
From $p$-adic $K$-theory we have the square
\[
\xymatrix{\bar{C}_{1}^{0} \otimes K_{p}^{0}(\Loops_{k}^{n+h-1} BU(p^k))  \ar@{->>}[d] & \bar{C}_{1}^{0} \otimes \Gamma \Div_{p^k}(\hat{\G}_m \oplus (\Z/p^k)^{n+h-1}) \ar[l]_-{\cong} \ar@{->>}[d] \\
\bar{C}_{1}^{0} \otimes K_{p}^{0}(\Loops^{n+h-1}B\Sigma_{p^k})/I_{tr}  & \bar{C}_{1}^{0} \otimes \Gamma \Sub_{p^k}(\hat{\G}_m \oplus \QZ^{n+h-1}). \ar[l]_-{\cong}
}
\]
The bottom arrow of this square is $\bar{C}_{1}^{0} \otimes g_{p^k}$. The last square maps to the middle square and the bottom square of the resulting cube is
\[
\xymatrix{\bar{C}_{1}^{0}(\Loops^{n+h-1}B\Sigma_{p^k})/I_{tr} & \Gamma \Sub_{p^k}(\bar{C}_{1}^0\otimes\G_{ C_1} \oplus \QZ^h) \ar[l]_-{\bar{C}_{1}^{0} \otimes f_{p^k}} \\
\bar{C}_{1}^{0} \otimes K_{p}^{0}(\Loops^{n+h-1}B\Sigma_{p^k})/I_{tr} \ar[u]^-{\cong} & \bar{C}_{1}^{0} \otimes \Gamma \Sub_{p^k}(\hat{\G}_m \oplus \QZ^{n+h-1}), \ar[l]^-{\bar{C}_{1}^{0} \otimes g_{p^k}}_-{\cong} \ar[u]_-{\cong}}
\]
where the vertical arrows are isomorphisms by Proposition \ref{p.basechange}. This square commutes because all of the other squares in the cube commute and the maps $\bar{C}_{1}^{0} \otimes g_{p^k}$ and $\bar C_{1}^0 \otimes f_{p^k}$ are determined by the others.

Thus $\bar C_{1}^0 \otimes f_{p^k}$ is an isomorphism and this implies that $f_{p^k}$ is an isomorphism.

\end{proof}

An advantage of the definition of the map $f_{p^k}$ via the character maps is that the following corollary is immediate.

\begin{cor}
Let $\al: \Z_{p}^h \lra{} \Sigma_{p^k}$ be monotypical, let $A = \im \al$, and let $p^j = p^k/|A|$. Restricting $f$ gives the isomorphism
\[
\E^0(BA\wr \Sigma_{p^j})/I_{tr} \cong \Gamma \Sub_{p^k}^{A^*}(\G_{\E} \oplus \QZ^h).
\]
\end{cor}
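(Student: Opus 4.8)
The plan is to recognize $f_{p^k}$ as a ``block diagonal'' map with respect to canonical product decompositions of its source and target, and then invoke Theorem \ref{t:main}. On the target side I would use the remark following Theorem \ref{transferthm}, which decomposes $\E^0(\Loops^hB\Sigma_{p^k})/I_{tr}$ as a product over conjugacy classes $[\al]$ of maps $\Z_p^h\to\Sigma_{p^k}$; by Corollary \ref{c:nonmonotype} the factor for $[\al]$ vanishes unless $\al$ is monotypical, and for monotypical $\al$ with $A=\im\al$ and $p^j=p^k/|A|$ we have $C(\im\al)\cong A\wr\Sigma_{p^j}$ (Lemma \ref{l:cent_1}) and $I_{tr}^{[\al]}=I_{tr}$ (Proposition \ref{p.trans}), so that factor is exactly $\E^0(BA\wr\Sigma_{p^j})/I_{tr}$. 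On the source side I would use the projection $\pr\colon\Sub_{p^k}(\G_{\E}\oplus\QZ^h)\to\Sub_{\leq p^k}(\QZ^h)$ onto the constant \'etale factor: its target is a finite discrete set, so $\Gamma\Sub_{p^k}(\G_{\E}\oplus\QZ^h)$ is the product of the rings $\Gamma\Sub_{p^k}^{A}(\G_{\E}\oplus\QZ^h)$ over the finite subgroups $A\subseteq\QZ^h$ of order dividing $p^k$.

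Next I would match the two index sets by Pontryagin duality: a monotypical $[\al]$ goes to $(\im\al)^{*}\subseteq\QZ^h$, and conversely a subgroup $B\subseteq\QZ^h$ dualizes the inclusion $B\hookrightarrow\QZ^h$ to a surjection $\Z_p^h\twoheadrightarrow B^{*}$, hence a monotypical map with image $B^{*}$; one checks these are mutually inverse bijections between monotypical classes and the index set of $\Sub_{\leq p^k}(\QZ^h)$. The crucial point — and the one that needs real care — is then that $f_{p^k}$ is compatible with the two decompositions, equivalently that on connected components it induces the dual of $\pr$. This I would extract from the construction of $f_{p^k}$ in Proposition \ref{themap}: the divisor of $\G_{\E}\oplus\Lk$ attached to a subgroup $H$ via the permutation representation has its \'etale part supported exactly on $\pr(H)$, and the height-zero (bottom) face of the cube in that proof identifies this support datum with the conjugacy class obtained by restricting $\al$ along $\Z_p^h\hookrightarrow\Z_p^{n+h}$ — precisely the component of $\Loops^hB\Sigma_{p^k}$ labelled by $(\im\al|_{\Z_p^h})^{*}$. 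In other words $f_{p^k}$ carries the idempotent of $\Gamma\Sub_{\leq p^k}(\QZ^h)$ supported at $A^{*}$ to the idempotent cutting out the component $BA\wr\Sigma_{p^j}$.

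Granting this, $f_{p^k}$ is block diagonal with matched blocks, so since it is an isomorphism by Theorem \ref{t:main}, each diagonal block
\[
\Gamma\Sub_{p^k}^{A^{*}}(\G_{\E}\oplus\QZ^h)\lra{}\E^0(BA\wr\Sigma_{p^j})/I_{tr}
\]
is an isomorphism, which is the assertion of the corollary. The only genuine obstacle is the connected-component compatibility of the previous paragraph; once that is in place everything else is assembling results already proved, the remaining effort being to keep the duality bookkeeping ($A$ versus $A^{*}$, restriction $\Z_p^h\hookrightarrow\Z_p^{n+h}$ versus $\pr$) straight.
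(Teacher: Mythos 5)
Your argument is correct and is precisely the content the paper gestures at when it says the corollary is ``immediate'' from the character-theoretic construction of $f_{p^k}$: the map is defined by embedding both sides into $Cl((\Sigma_{p^k})_{p}^{n+h},C_0)/I_{tr}$, whose decomposition by conjugacy classes of $\Z_p^{n+h}\to\Sigma_{p^k}$ restricts on one side to the decomposition of $\E^0(\Loops^hB\Sigma_{p^k})/I_{tr}$ by classes of $\Z_p^h\to\Sigma_{p^k}$ (via restriction along $\Z_p^h\hookrightarrow\Z_p^{n+h}$) and on the other to the decomposition of $\Gamma\Sub_{p^k}(\G_{\E}\oplus\QZ^h)$ by the fibers of $\pr$ (Pontryagin dual to that restriction), so $f_{p^k}$ is block diagonal and Theorem \ref{t:main} forces each block to be an isomorphism. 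One small phrasing correction: the component of $\Loops^hB\Sigma_{p^k}$ is labelled by the conjugacy class $[\al|_{\Z_p^h}]$ itself rather than by $(\im\al|_{\Z_p^h})^{*}$, the latter being the matching index on the $\Sub$ side; and it is worth saying explicitly that a monotypical class $[\al]$ is determined up to conjugacy by the surjection $\Z_p^h\twoheadrightarrow\im\al$ (all orbits are free $\im\al$-orbits of multiplicity $p^k/|\im\al|$), which is what makes the duality correspondence a genuine bijection.
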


\section{Appendix: An elementary proof when $k=1$} \label{s:KeqOne}
We give a proof of Strickland's theorem when $k=1$ by direct calculation. The calculation reduces to a congruence regarding Stirling numbers of the first kind. The trick in this case is that the Honda formal group law is easy to describe modulo $x^{p^n}$.

The mod $I_n$ reduction of the map
\[
f_p:\Gamma \Sub_p(\G_{\E}) \lra{} \E^0(B\Sigma_p)/I_{tr}
\]
from Proposition \ref{themap} is the map
\[
\Gamma \Sub_p(\G_{K(n)}) \lra{} K(n)^0(B\Sigma_p)/I_{tr}.
\]
The domain is still a closed subscheme of divisors. It suffices to show that
\[
K(n)^0(BU(p)) \lra{\pi^*} K(n)^0(B\Sigma_p)/I_{tr}
\]
is surjective.

\begin{prop}
The ring $K(n)^0(B\Sigma_p)/I_{tr}$ is generated by the Chern classes of the permutation representation.
\end{prop}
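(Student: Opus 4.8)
\textit{Proposal.} The plan is to show that a single Chern class, namely $c_{p-1}(\pi)$, already generates $K(n)^0(B\Sigma_p)/I_{tr}$ as a $K(n)^0$-algebra. The idea is to restrict everything to the $p$-Sylow subgroup $\Z/p\subset\Sigma_p$, to compute the restrictions of the Chern classes of $\pi$ explicitly using the shape of the Honda formal group law modulo $x^{p^n}$, and then to invoke Strickland's freeness result through a rank count.

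First I would observe that $\pi$ restricted to $\Z/p=\langle(1\,2\,\cdots\,p)\rangle$ is the regular representation, so $\pi|_{\Z/p}\cong\bigoplus_{j=0}^{p-1}L^{\otimes j}$ with $L$ the tautological line bundle on $B\Z/p$. Writing $x=c_1(L)$, so that $K(n)^0(B\Z/p)=K(n)^0[x]/([p](x))=K(n)^0[x]/(x^{p^n})$ by the Honda relation, the total Chern class of $\pi|_{\Z/p}$ is $\prod_{j=1}^{p-1}(1+[j](x))$, the $j=0$ factor being trivial. The first input is that $[j](x)\equiv jx\pmod{x^{p^n}}$ for every integer $j$; this is immediate from $\log_F(x)=x+\tfrac{1}{p}x^{p^n}+\tfrac{1}{p^2}x^{p^{2n}}+\cdots$, which forces $F(x,y)\equiv x+y$ modulo terms of degree $\geq p^n$. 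Hence $c_i(\pi|_{\Z/p})=e_i(1,2,\ldots,p-1)\,x^i$ in $K(n)^0[x]/(x^{p^n})$, where $e_i$ is the $i$-th elementary symmetric polynomial. The second input is the congruence $\prod_{j=1}^{p-1}(T+j)\equiv T^{p-1}-1\pmod p$ — equivalently, the classical congruences on the Stirling numbers of the first kind together with Wilson's theorem — which gives $e_i(1,\ldots,p-1)\equiv 0$ for $1\leq i\leq p-2$ and $e_{p-1}(1,\ldots,p-1)\equiv -1$. Thus $c_i(\pi)$ restricts to $0$ in $K(n)^0(B\Z/p)$ for $i\notin\{0,p-1\}$, while $c_{p-1}(\pi)$ restricts to $-x^{p-1}$.

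Next I would check that restriction descends to a map $\overline{\mathrm{res}}\colon K(n)^0(B\Sigma_p)/I_{tr}\to K(n)^0[x]/(x^{p^n-1})$. By the double coset formula the restriction to $\Z/p$ of any transfer from $\Sigma_i\times\Sigma_j$ with $i,j>0$ is a sum of transfers from the trivial subgroup, since no $p$-cycle lies in $\Sigma_i\times\Sigma_j$; and $\mathrm{tr}^{\Z/p}_e(1)$ annihilates $x$ by Frobenius reciprocity (as $x$ restricts to $0$ on the point), hence lies in the socle $K(n)^0\cdot x^{p^n-1}$. So $\mathrm{res}(I_{tr})\subseteq(x^{p^n-1})$. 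Set $c:=c_{p-1}(\pi)$, viewed in $K(n)^0(B\Sigma_p)/I_{tr}$, so that $\overline{\mathrm{res}}(c)=-x^{p-1}$. Writing $q=\tfrac{p^n-1}{p-1}$, one has $(q-1)(p-1)=p^n-p<p^n-1$, so the images $1,\overline{\mathrm{res}}(c),\ldots,\overline{\mathrm{res}}(c)^{q-1}$ are the distinct monomials $1,x^{p-1},\ldots,x^{p^n-p}$ and are therefore $K(n)^0$-linearly independent in $K(n)^0[x]/(x^{p^n-1})$. Hence $1,c,\ldots,c^{q-1}$ are linearly independent in $K(n)^0(B\Sigma_p)/I_{tr}$. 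By Strickland's theorem (Theorem 8.6 of \cite{etheorysym}) this ring is free over the field $K(n)^0$ of rank exactly $q$, so these $q$ elements form a basis and $K(n)^0(B\Sigma_p)/I_{tr}=K(n)^0[c]/(c^q)$ is generated by the single Chern class $c_{p-1}(\pi)$.

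The only genuine work here is the two congruences and keeping the degrees straight — in particular that it is the next-to-top Chern class that survives and that $(q-1)(p-1)$ stays strictly below $p^n-1$; beyond that everything is formal once Strickland's freeness result is available. I do not anticipate a real obstacle: morally the argument just records that, after restricting to the Sylow subgroup and killing the transfer ideal, the surviving Chern class of the permutation representation becomes a unit multiple of the coordinate on the line spanned by a nontrivial character, which is exactly the coordinate corepresenting $\Sub_p(\G_{K(n)})$.
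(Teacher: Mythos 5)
Your proof is correct and takes essentially the same approach as the paper's: restrict to the Sylow subgroup $\Z/p$, compute the restricted Chern classes of the regular representation using the Honda formal group law modulo $x^{p^n}$ and the Wilson/Stirling congruence $\prod_{j=1}^{p-1}(T+j)\equiv T^{p-1}-1\pmod p$, and finish with a rank count against Strickland's freeness theorem. The one place you add detail is the verification, via the double coset formula and Frobenius reciprocity, that restriction to $\Z/p$ descends to a map $K(n)^0(B\Sigma_p)/I_{tr}\to K(n)^0[x]/(x^{p^n-1})$ — a point the paper leaves implicit.
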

\begin{proof}
The ideal $I_{tr}$ has rank $1$ so the rank of $K(n)^0(B\Sigma_p)/I_{tr}$ is $(p^n-1)/(p-1)$.

The composite
\[
\Z/p \hookrightarrow \Sigma_p \lra{\pi} U(p)
\]
is the regular representation $\rho$ of $\Z/p$. Thus it suffices to show that the Chern classes of $\rho$ generate a subring of rank $(p^n-1)/(p-1)$ inside of $K(n)^0(B\Z/p)/I_{tr}$.

Recall that there is an isomorphism
\[
K(n)^0(B\Z/p) \cong K(n)^0[x]/(x^{p^n})
\]
and the transfer map from $e$ to $\Z/p$ sends $1$ to $x^{p^n-1}$. Thus
\[
K(n)^0(B\Z/p)/I_{tr} \cong K(n)^0[x]/(x^{p^n-1}).
\]

Let $F$ be the height $n$ Honda formal group law. By Lemma 4.12 in \cite{Bakuradze-Priddy}, there is a congruence
\begin{equation} \label{gplaw}
x+_{F}y = x+y- \sum_{0<j<p}p^{-1} \binom{p}{j}(x^{p^{n-1}})^j(y^{p^{n-1}})^{p-j}
\end{equation}
modulo $x^{p^n}$.

Because the regular representation is the sum of the irreducible representations of $\Z/p$, the total Chern class of $\rho$ is
\[
c(\rho) = \Prod{0<i<p} (1-[i]_{F}(x)t),
\]
where $[i]_F(x)$ is the first Chern class of the tensor power of a generating representation $x:\Z/p \hookrightarrow S^1$.

Now Equation \ref{gplaw} implies that $[i]_F(x) = ix$ mod $x^{p^n}$. We are left trying to understand
\[
c(\rho) = \Prod{0<i<p} (1-ixt),
\]
Thus
\[
c_i(\rho) = \Big(\sum_{0 < k_1 < \ldots < k_i < p} k_1 k_2 \cdots k_i \Big) x^{i} = s(p,i)x^i,
\]
where $s(p,i)$ is the Stirling number of the first kind. It is well-known (see Corollary 4 in \cite{levine}) that $s(p,i)$ is divisible by $p$ when $1 < i < p$.

Thus the only Chern class that does not disappear (we are working in characteristic $p$) is $c_{p-1}(\rho)$. This is congruent to $p-1$.

Now the subring of
\[
K(n)^0(B\Z/p)/I_{tr} \cong K(n)^0[x]/x^{p^n-1}
\]
generated by $x^{p-1}$ has rank $(p^n-1)/(p-1)$. We conclude that $K(n)^0(B\Sigma_p)/I_{tr}$ is generated by the Chern classes of the permutation representation.
\end{proof}

To generalize this, one might want to use the injection of Proposition 9.1 in \cite{etheorysym}
\[
\Gamma \Sub_{p^k}(\G_{\E}) \hookrightarrow \Prod{A \subset \Sigma_{p^k} \text{ transitive}}\Gamma \Level(A,\G_{\E}),
\]
where the product is over abelian transitive subgroups of $\Sigma_{p^k}$.

One of the key obstructions to generalizing this proof seems to be the fact that, even for $p=2$ and $k = 2$, the injection
\[
\Gamma \Sub_{p^2}(\G_{\E}) \lra{} \Gamma \Level(\Z/2\times \Z/2, \G_{\E}) \times \Gamma \Level(\Z/4, \G_{\E})
\]
does not pass to an injection after taking the quotient by $I_n \subset \E^0$.

\bibliographystyle{amsalpha}
\bibliography{mybib}

\end{document}